\setlist{leftmargin=5mm}
\renewcommand{\P}{\mathbb{P}}
\newcommand{\R}{\mathbb{R}}
\newcommand{\C}{\mathbb{C}}
\theoremstyle{definition}
\newtheorem{theorem}{Theorem}[section]
\newtheorem*{theorem*}{Theorem}
\newtheorem{lemma}[theorem]{Lemma}
\newtheorem{corollary}[theorem]{Corollary}
\newtheorem{conjecture}[theorem]{Conjecture}
\theoremstyle{remark}
\newtheorem*{remark}{Remark}
\newtheorem*{example}{Example}
\newtheorem*{illustration}{Illustration}
\newtheorem*{notation}{Notation}
\newtheorem*{definition}{Definition}
\newtheorem*{outline}{Outline of the proof}
\title[The Point of Collapse of Axis Aligned Polygons]{Glick's conjecture on the point of collapse of axis-aligned polygons under the pentagram maps}
\author{Zijian Yao}
\address{Zijian Yao, Brown University.}
\email{zijian$\_$yao@brown.edu}
\begin{document}

\maketitle

\begin{abstract} 

The pentagram map has been studied in a series of papers by Schwartz and others. Schwartz showed that an axis-aligned polygon collapses to a point under a predictable number of iterations of the pentagram map. Glick gave a different proof using cluster algebras, and conjectured that the point of collapse is always the center of mass of the axis-aligned polygon. In this paper, we answer Glick's conjecture positively, and generalize the statement to higher and lower dimensional pentagram maps. For the latter map, we define a new system -- the mirror pentagram map -- and prove a closely related result. In addition, the mirror pentagram map provides a geometric description for the lower dimensional pentagram map, defined algebraically by Gekhtman, Shapiro, Tabachnikov and Vainshtein.  \\  
\end{abstract}

\section{Introduction}\label{sec:introduction}

The pentagram map on general polygons was introduced by Richard Schwartz in \cite{Schwartz1} in 1992. It is a map $$T:\left\{\text{polygons in }\P^2\right\} \rightarrow \left\{\text{polygons in } \P^2\right\}$$ given by the following procedure: we cyclically label $n$ points (which form a polygon $P$), then draw lines $l_i$ from the $i$-th vertex to the $(i+2)$-th vertex and obtain the polygon $T(P)$ whose vertices are given by $l_i \cap l_{i+1}$, reducing modulo $ n$ whenever necessary. In other words, $T$ acts on the polygon $P$ by drawing $2$-diagonals and taking the intersections of successive diagonals as vertices of $T(P)$. Figure \ref{figure:pentagram_example} provides an example of the pentagram map on a pentagon. For convenience we will work over $\R$ throughout the paper, but our results hold over $\C$ (and other fields)  as well. We remark that the pentagram map commutes with projective transformations. Furthermore, we point our that the pentagram map is a completely integrable dynamical system, see \cite{OST} by Ovsienko, Schwartz and Tabachnikov. 

We adopt a similar labelling convention of the polygon $P$ used by Schwartz in \cite{Schwartz1}. The vertices of $P$ are labelled by indices of the same parity, while the edge between vertices $P_{i-1}$ and $P_{i+1}$ is labelled by $P_{i}$. The intersection of diagonals $P_{i-1} P_{i+3}$ and $P_{i-3} P_{i+1}$ is labelled as $Q_i$, which form the vertices of $T(P)$. (See Figure \ref{figure:pentagram_example}).
\begin{figure}
\begin{center}
\includegraphics[scale=0.55]{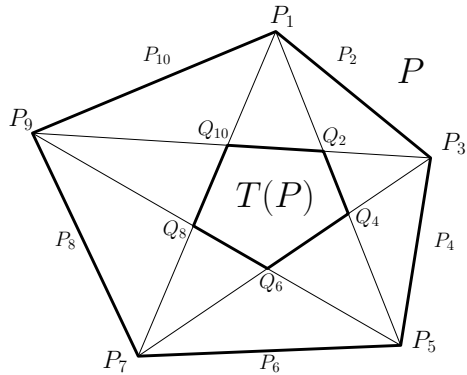} 
\end{center} 
\caption{\label{figure:pentagram_example} The pentagram map on a pentagon.}
\end{figure}

\begin{definition}
We say that a $2n$-gon $P$ is \emph{axis-aligned} if the edges $P_2, P_6, ..., $ $P_{4n-2}$ are concurrent, and the edges $P_{4}, P_8, ..., P_{4n}$ are concurrent, in other words, alternating edges go through the same point in $\P^2$.  The name comes from the fact that we can projectively transform such a polygon so that the sides are parallel to the affine $x$-axis and $y$-axis. 
\end{definition}

\begin{definition}
Let $P$ be an axis-aligned polygon. We define the center of mass $\mathscr C(P)$ of $P$ as follows: first we apply a projective transformation $\varphi$ to $P$ such that $\varphi (P)$ has edges parallel to $x$-axis and $y$-axis alternatively. Note that $\varphi (P)$ sits in the affine plane $\R^2 \subset \P ^2$, with standard coordinates. Let $C$ be the centroid of $\varphi (P)$, namely, $$C = \displaystyle \frac{1}{2n}  \left(\varphi (P_1) + \varphi (P_3) + ... + \varphi (P_{4n-1})\right)$$ is the point obtained by taking the average of the coordinates of all vertices in $\varphi (P)$. The center of mass of $P$ is defined as $$\mathscr C (P) = \varphi^{-1} (C) = \varphi^{-1} \left( \frac{1}{2n} \left(\sum_{j = 1}^{2n}  \varphi(P_{2j-1})\right) \right). $$ 
\end{definition}

\begin{remark} The definition of $\mathscr C (P)$ is independent of different choices of projective transformation $\varphi$, such that the edges of $\varphi (P)$ are parallel to $x$-axis and $y$-axis respectively. In other words, $\mathscr C (P)$ is well defined. 
\end{remark}

\begin{remark}
If the polygon $P$ has edges already parallel to the $x$-axis and $y$-axis, then we may put a unit mass at each vertex of the polygon, then $\mathscr C(P)$ will be the physical center of mass of $P$.  This justifies the choice of terminology ``center of mass'' in the paper.
\end{remark}

Axis-aligned polygons are of particular interest since Schwartz proved the following surprising result: 

\begin{theorem}[Schwartz $'01$ \cite{Schwartz2}, Schwartz $'08$ \cite{Schwartz3}, Glick $'11$ \cite{Glick2}]{\label{T001}} Let $S$ be the set of all axis-aligned $2n$-gons. Then there exists a generic subset $S' \subset S$ such that for any axis-aligned $A \in S'$, $T^{n-2}(A)$ has all its vertices lying on two lines, with each vertex alternating. Consequently, all vertices of $T^{n-1}(A)$ collapse to a single point. 
\end{theorem}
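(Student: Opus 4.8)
Since $T$ commutes with projective transformations and every hypothesis is projectively invariant, I would first normalize $A$ to a genuine \emph{staircase} $2n$-gon, with consecutive sides alternately parallel to the two coordinate axes (send the two points of concurrency of the alternating edge-families to the points at infinity of the axes). Its vertices are then governed by two cyclic sequences: the distinct $x$-coordinates $a_1,\dots,a_n$ of the vertical sides and the distinct $y$-coordinates $b_1,\dots,b_n$ of the horizontal sides, the vertices being, after relabelling, the points $(a_k,b_{k-1})$ and $(a_k,b_k)$. It is convenient to record the two auxiliary $n$-gons $R=(R_1,\dots,R_n)$ with $R_k=(a_k,b_k)$ and $R'=(R'_1,\dots,R'_n)$ with $R'_k=(a_{k+1},b_k)$, so that $A$ is the interlacing $\dots,R'_{k-1},R_k,R'_k,R_{k+1},\dots$. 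A clean remark isolates the endgame: if a $2n$-gon has its odd vertices on a line $L_1$ and its even vertices on a line $L_2$, then each of its $2$-diagonals equals $L_1$ or $L_2$, so consecutive $2$-diagonals meet at $L_1\cap L_2$ and $T$ sends the polygon to that single point; hence it suffices to prove that $T^{n-2}(A)$ has its odd vertices collinear and its even vertices collinear.

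The core of the argument is a one-step structure lemma, which I would obtain by intersecting the $2$-diagonals of $A$ directly. Each $2$-diagonal of a staircase polygon joins a vertex on the line $x=a_k$ to a vertex on $x=a_{k+1}$, so two consecutive $2$-diagonals meet on an edge of $R$ or on an edge of $R'$; carrying this out shows that the $2n$ vertices of $T(A)$ are precisely the points $\overline{R_kR_{k+1}}\cap\overline{R'_{k-1}R'_k}$ and $\overline{R_kR_{k+1}}\cap\overline{R'_kR'_{k+1}}$. In particular every vertex of $T(A)$ lies on one edge-line of $R$ and one of $R'$, so $T(A)$ is again the interlacing of two $n$-gons along a net of lines and the same lemma applies to it in turn; one also reads off the cross-ratios locating the two new vertices along a given edge of $R$ (resp.\ $R'$), which in the affine chart are discrete-second-difference expressions such as $(b_k-b_{k-1})/(2b_k-b_{k-1}-b_{k+1})$ in the $b$'s, and the analogous expression in the $a$'s. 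This presents $T$, restricted to the space of such net-inscribed $2n$-gons, as an explicit birational self-map $\tau$ of the pair of auxiliary $n$-gons together with their cross-ratio data --- equivalently, a map on $n$-gons in $\P^1\times\P^1$ up to projective equivalence --- which one can iterate; keeping these formulas explicit is also what should ultimately locate the point of collapse.

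I would then iterate $\tau$ and show by induction on the number of steps $m$ that after $m=n-2$ steps the configuration has the two-lines form above, whence $T^{n-1}(A)$ collapses by the first paragraph. The natural induction hypothesis is that the defining data of the $m$-th iterate, being assembled from $m$-fold iterated second differences of the length-$n$ cyclic sequences, satisfies a linear difference condition whose order grows with $m$; once $m=n-2$ this condition is rigid enough (a vanishing-Casoratian, i.e.\ discrete-Lagrange-interpolation, phenomenon on cyclic sequences of length $n$) to force the two families in the net to degenerate to two pencils through a common point, which is exactly the two-lines configuration. Throughout one works inside the generic set $S'$ on which no denominator met along the orbit vanishes, so that every intersection is defined and the successive diagonals are genuinely in general position.

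The main obstacle is this inductive step. One application of $T$ is transparent --- it is second differencing --- but the composite $\tau^m$ is a large rational map, and the mechanism by which each iteration lowers the order of the controlling difference condition by exactly one is the genuinely delicate combinatorial content of the theorem: it is what Schwartz proves via his method of condensation (Dodgson-type determinantal identities for the iterated diagonal intersections) and what Glick proves by recognizing the pentagram map, in suitable $y$-coordinates, as a cluster $Y$-pattern whose mutations make the relevant variables degenerate after exactly $n-1$ steps. The cleanest route would be to find coordinates on the space of net-inscribed configurations in which $\tau$ becomes triangular, or nilpotent modulo the projective symmetry, so that collapse after $n-1$ steps is visible as a degree or nilpotency count; short of that, the part demanding real care is to check that one lands on exactly the two-line locus, with the correct alternation of vertices, rather than on some other degeneration.
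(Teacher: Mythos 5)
Your setup is sound as far as it goes: the normalization to a staircase, the observation that the $2$-diagonals of the staircase are exactly the edges of the two auxiliary $n$-gons $R$ and $R'$ (this is the ``mating'' decomposition $X*Y$ that runs through the whole subject), and the closing remark that a two-line configuration collapses in one further step are all correct. But the proposal does not contain a proof of the one statement that carries all the content: that after exactly $n-2$ iterations the configuration degenerates to two lines with the vertices alternating. Your last two paragraphs describe what such a proof would have to accomplish --- an induction in which a ``controlling difference condition'' becomes rigid at step $n-2$, a ``vanishing-Casoratian phenomenon'' --- and then explicitly defer the mechanism to Schwartz's condensation identities or Glick's $Y$-pattern argument without executing either. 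The difficulty is not cosmetic: only the first application of $T$ is literally a second-differencing, and already $\tau^2$ intersects lines whose endpoints are themselves rational functions of the $a$'s and $b$'s, so there is no linear difference operator whose order you can track through the iteration, and no substitute invariant is produced that provably degenerates at step $n-2$ and not before. As written this is a framework plus a citation of the hard step, not a proof.

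For comparison: the paper treats this theorem as a cited result of Schwartz and Glick, but the machinery it builds in Section 2 (following Schwartz's Desargues-type argument in \cite{Schwartz2}) is precisely the device that makes your ``the order drops by one per iteration'' heuristic rigorous, with no difference-equation bookkeeping at all. One lifts the $n-1$ sequences $A_1,\dots,A_{2n-3}$ of $n$ points each into $\R^n$ so that each spans a hyperplane and the $n-1$ hyperplanes are in general position; one step of the mating process corresponds to intersecting with one more hyperplane, so after $n-2$ steps one is looking at the common intersection of all $n-1$ hyperplanes, which is a line; projecting back to $\R^2$ places half the vertices of $T^{n-2}(A)$ on one line and, by the symmetric construction on the other half of the vertices, the rest on a second line. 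The iteration count becomes a codimension count, and the genericity of $S'$ is exactly the transversality of the lifted hyperplanes and of their intersections with the cyclic skeletons. If you wish to complete the argument along your own lines you must supply the condensation or Casoratian identity in full; otherwise the lifting argument is the shorter road.
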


\begin{remark}
The proof of Theorem \ref{T001} consists of a proof of some particular lower bound and upper bound. Both Schwartz (in \cite{Schwartz3}) and Glick (in \cite{Glick2}) explicitly showed the upper bound (using different methods). Schwartz's proof is easily adapted to prove the lower bound, even though he did not explicitly mention it. In \cite{Schwartz2}, Schwartz proved a generalization of the collapsing result for higher diagonal maps. His argument also works for $2$-diagonal maps, in which case the statement of Theorem \ref{T001} is recovered, though again, this point is not emphasized in the paper. 
\end{remark}

\begin{example} Figure \ref{figure:collapse_example} gives an example of the theorem for $n =3$. \end{example}

\begin{definition}
For an axis aligned polygon $A$ that satisfies the conditions in Theorem \ref{T001}, we call the point $p$, where all vertices of $T^{n-1}(A)$ collapse to, \emph{the point of collapse} of $A$ under the pentagram map. 
\end{definition}

\begin{figure}
\begin{center}
\includegraphics[scale=0.6]{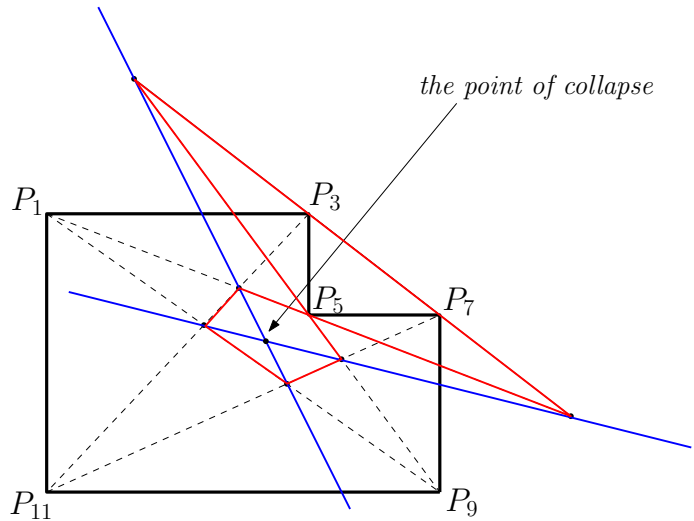} 
\end{center} 
\caption{\label{figure:collapse_example} The point of collapse of an axis-aligned 6-gon under $T^2$.}
\end{figure}

\begin{remark}
We mention that $T^{-1}(A)$ gives the line at $\infty,$ so there is a nice duality  between the line at $\infty$ and the point of collapse for axis-aligned polygons. 
\end{remark}

Glick observed an interesting and surprising pattern through computer experimentation, and has the following conjecture \cite{Glick3}: 

\begin{conjecture}[Glick, unpublished]
Let $P \subset \P^2$ be an axis-aligned 2n-gon, let $p = T^{n-1} (P)$ be its point of collapse under the pentagram map, then $$p = T^{n-1} (P) = \mathscr C (P).$$Namely, the point of collapse is equal to the center of mass.
\end{conjecture}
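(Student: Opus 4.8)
The plan is to reduce to a convenient coordinate system and then track the pentagram iterates explicitly. By the projective equivariance of $T$ and the projective naturality built into the definition of $\mathscr C$, I may assume $P$ is already in normalized (``staircase'') position: label the $2n$ vertices so that $P_{4j-3} = (x_j, y_j)$ and $P_{4j-1} = (x_{j+1}, y_j)$ (indices of $x,y$ taken mod $n$), so that the edges are alternately horizontal and vertical and $\mathscr C(P) = (\bar x, \bar y)$ with $\bar x = \frac1n\sum_j x_j$, $\bar y = \frac1n\sum_j y_j$. It therefore suffices to show that the point of collapse is $(\bar x, \bar y)$.

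First I would record the elementary computation of one step of $T$ in these coordinates. Writing $Q_i$ for the vertices of $T(P)$ as in the excerpt, a direct intersection-of-lines calculation shows that the ``even'' vertices of $T(P)$ have first coordinate $\frac{x_j x_{j+2} - x_{j+1}^2}{x_j + x_{j+2} - 2x_{j+1}}$, depending only on the $x_j$, and the ``odd'' vertices have second coordinate $\frac{y_{j-1}y_{j+1} - y_j^2}{y_{j-1}+y_{j+1}-2y_j}$, depending only on the $y_j$, while the remaining coordinate of each vertex is fixed by the incidences forced by the pentagram construction. Thus one step decouples the ``$x$-data'' from the ``$y$-data''; after the first step the two interact, but every iterate $T^k(P)$ is still given by explicit rational functions of $x_*,y_*$, and Theorem~\ref{T001} guarantees that at step $n-1$ each vertex coordinate degenerates to one common value $\xi$ in the first slot and $\eta$ in the second, with $p=(\xi,\eta)$. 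By the $x\leftrightarrow y$ symmetry it is enough to prove $\xi=\bar x$.

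The heart of the argument is to identify $\xi$. The plan is to exhibit $\xi$ as a ratio of two explicit polynomials in $x_1,\dots,x_n$: I would follow Schwartz's ``method of condensation'' (Dodgson condensation, i.e.\ the Desnanot--Jacobi identity), which is precisely what makes the iterated intersections collapse and which presents each intermediate vertex coordinate as a ratio of minors of a structured matrix assembled from the $x_j$. One then reads off the numerator and denominator of $\xi$ at step $n-1$ and recognizes them, via a determinantal identity, as $\bigl(\sum_j x_j\bigr)D$ and $nD$ for a common factor $D$, whence $\xi=\frac1n\sum_j x_j=\bar x$. I expect \emph{this algebraic identity to be the main obstacle}: the abundant symmetry available — cyclic rotation of the polygon, the affine covariances $x_j\mapsto ax_j+b$ and $y_j\mapsto cy_j+d$ coming from the projective maps that preserve normalized form, polygon reflections, and the line-at-infinity/point-of-collapse duality noted in the Remark — forces $\xi$ to be cyclic-symmetric, homogeneous of degree one and ``affine of slope one'' in the $x_j$, but these properties alone do not pin down $\xi=\bar x$, so a genuine computation cannot be avoided. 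An alternative I would keep in reserve is induction on $n$: degenerate a normalized $2n$-gon to a normalized $2(n-1)$-gon by merging an appropriate triple of vertices, check that this degeneration is compatible with one pentagram step and with passing from $T^{n-1}$ to $T^{n-2}$, and combine the inductive hypothesis with $\bar x=\frac1n\sum x_j$; there the obstacle is controlling the pentagram map near the (non-generic) degeneration.
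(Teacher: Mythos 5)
There is a genuine gap, and you have already put your finger on it yourself: the entire content of the theorem is concentrated in the identity $\xi=\bar x$, and your proposal does not establish it. The reductions you make are sound but essentially free — normalizing to staircase position, the one-step formula $\frac{x_jx_{j+2}-x_{j+1}^2}{x_j+x_{j+2}-2x_{j+1}}$ (which is correct; it is the same expression that appears in the paper's treatment of the lower pentagram map), the $x\leftrightarrow y$ symmetry, and the observation that cyclic symmetry plus affine equivariance $x_j\mapsto ax_j+b$ only constrain $\xi$ to be \emph{some} projectively natural mean of the $x_j$, not the arithmetic one. What remains is precisely the hard part: producing the determinantal (condensation) presentation of the step-$(n-1)$ coordinate and proving that its numerator and denominator factor as $\bigl(\sum_j x_j\bigr)D$ and $nD$. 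You state this as an expectation, not a proof, and your fallback (induction on $n$ by degenerating a $2n$-gon to a $2(n-1)$-gon) carries its own unresolved obstacle, namely controlling $T^{n-1}$ through a non-generic limit where Theorem~\ref{T001} no longer applies directly. As written, the proposal is a plausible research plan with its central lemma open.

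For contrast, the paper avoids this computation entirely by a geometric device: the $2n$ vertices are regrouped into $n-1$ ordered $n$-point sets, which are lifted to $\R^n$ by a \emph{parallel lifting} that forces all $n-1$ lifted sets to have the same centroid $C$; Schwartz's hyperplane/cyclic-skeleton machinery identifies the pentagram iterates with projections of intersections of the spanned hyperplanes, and since $C$ lies on every hyperplane it lies on the line $H_{n-1,n-1}$ that projects onto half the vertices of $T^{n-2}(P)$; symmetry then pins the collapse point at $\pi(C)=\mathscr C(P)$. The one nontrivial verification in that route is that the constrained (parallel) liftings can still be made generic (Lemma~\ref{lemma:lifting_into_general_positions}), which is a concrete nonvanishing-determinant check — arguably the analogue, in much more tractable form, of the identity your approach would need. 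If you want to salvage your route, the condensation identity is where all the work must go.
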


\begin{remark}
Since $\mathscr C (P)$ is projectively invariant, this is equivalent to the following: suppose that the edges of $P$ are parallel to $x$- and $y$-axis respectively, where $P$ has vertices $(x_1,y_1), (x_2, y_2), \ldots,(x_{2n},y_{2n})$, where $x_{2k-1} = x_{2k}, y_{2k} = y_{2k+1}$,  then the coordinate of the point of collapse $p$ coincide with the center of mass $$ p = T^{n-1} (P) = (\frac{\sum_{i}^{}x_i}{2n},\frac{\sum_{i}^{}y_i}{2n}).$$ 
\end{remark}

\begin{notation} From now on in the paper, axis-aligned polygon will only refer to the polygon with edges parallel to the $x$-axis and $y$-axis respectively. In this case, we can restrict ourselves to $\R^2 \subset \P^2$ (and later on when we discuss higher pentagram maps, $\R^m \subset \P^m$).\\
\end{notation}

In Section \ref{sec:pentagram} of this paper, we answer the conjecture positively: 

\begin{theorem}{\label{T002}} Let $\mathcal P = P_{1} P_{3} .... P_{4n-1} \subset \R^2$  be a generic axis-aligned polygon, let $T$ be the pentagram map, then $T^{n-1} (\mathcal P)$ collapses to the center of mass of $\mathcal P$, in other words, $$T^{n-1}(\mathcal P) = \mathscr C (\mathcal P) = \frac{1}{2m}(P_1 +P_3... + P_{4n-1}).$$ 
\end{theorem}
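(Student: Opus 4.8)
The plan is to reduce to a single coordinate, isolate the mechanism by which the two coordinates decouple under $T$, and then push that decoupling through all $n-1$ iterations by recasting the pentagram recursion as Dodgson condensation, as in Schwartz's proof of Theorem~\ref{T001}. \emph{Set-up and first reduction.} Coordinatize a generic axis-aligned $2n$-gon $\mathcal P$ by the positions $b_1,\dots,b_n$ of its vertical edge-lines and $a_1,\dots,a_n$ of its horizontal edge-lines, so that its vertices in cyclic order are $(b_1,a_1),(b_2,a_1),(b_2,a_2),\dots,(b_n,a_n),(b_1,a_n)$; then $\mathscr C(\mathcal P)=\bigl(\tfrac1n\sum_i b_i,\,\tfrac1n\sum_i a_i\bigr)$. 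Reflection in the line $y=x$ carries axis-aligned polygons to axis-aligned polygons and conjugates $T$ to itself up to a relabelling of vertices, so it suffices to prove that the $x$-coordinate of the collapse point $T^{n-1}(\mathcal P)$ equals $\bar b:=\tfrac1n\sum_i b_i$; in particular one must show this number is \emph{independent of the $a_i$}.

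\emph{Decoupling, and what it should become.} The key local observation is that in the pentagram construction on $\mathcal P$ every vertex of $T(\mathcal P)$ of a fixed parity, say $Q=U_{2k-1}U_{2k+1}\cap U_{2k}U_{2k+2}$, is the intersection of two diagonals with \emph{the same vertical extent}: both run from height $a_k$ to height $a_{k+1}$. Writing each as a function of the fraction $t$ of the way between those heights and equating, one gets $t$, and hence $x(Q)=b_k+t(b_{k+1}-b_k)=\dfrac{b_{k+1}^2-b_kb_{k+2}}{2b_{k+1}-b_k-b_{k+2}}$, as a rational function of $b_k,b_{k+1},b_{k+2}$ alone, while $y(Q)$ is an affine combination of $a_k,a_{k+1}$ with $b$-dependent weights; symmetrically, the vertices of the other parity have $y$-coordinate a function of $a_{k-1},a_k,a_{k+1}$ alone. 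The intermediate polygons $T^k(\mathcal P)$ are \emph{not} axis-aligned and their individual vertex positions genuinely mix the $a$'s and $b$'s, so this cannot be iterated naively; what one wants is that it persists at the level of the \emph{condensation data} attached to $T^k(\mathcal P)$ in the sense of \cite{Schwartz3} — that this datum splits as a piece built from the $b_i$ only plus a piece built from the $a_i$ only — which is the precise form of the slogan ``$T$ preserves the center of mass on the whole chain of polygon-types running from axis-aligned polygons down to points.''

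\emph{Condensation and evaluation.} Concretely, refining Schwartz's argument, I would encode the relevant $x$-coordinate data of $T^k(\mathcal P)$ as ratios of adjacent connected minors (of size growing with $k$) of a single fixed matrix $M=M(b_1,\dots,b_n)$, built in a banded/bordered pattern from $1,b_1,\dots,b_n$, arranged so that one step of $T$ is exactly one step of Dodgson condensation, i.e. an instance of the Desnanot--Jacobi identity. Granting this: after $n-1$ steps every vertex of $T^{n-1}(\mathcal P)$ has collapsed, the only surviving minors of $M$ are its two largest ones, and the common $x$-coordinate of the collapse point is their ratio; a direct computation of that ratio for the explicit $M$ should give exactly $\bar b$. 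The same argument in the $a_i$ produces $\bar a$ for the $y$-coordinate, giving $T^{n-1}(\mathcal P)=(\bar b,\bar a)=\mathscr C(\mathcal P)$. Genericity is used exactly to ensure that every minor met along the way is nonzero, so that $T^k(\mathcal P)$ is defined and does not collapse before step $n-1$; this is precisely the condition defining the set $S'$ of Theorem~\ref{T001}, so no separate discussion of non-generic $\mathcal P$ is needed.

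\emph{Main obstacle.} I expect the hard part to be choosing $M$ so that the pentagram recursion coincides \emph{on the nose} with Dodgson condensation — in particular reconciling the cyclic combinatorics of a polygon with the essentially linear combinatorics of condensation, which will likely force one to cut the cycle at a generic vertex and carry auxiliary boundary data around it — and then carrying out the terminal evaluation so that the ratio of the two largest minors of $M$ is precisely the arithmetic mean $\tfrac1n(b_1+\dots+b_n)$ rather than merely some symmetric function of the $b_i$. Getting this normalization exactly right, and checking that the cutting does not disturb it, is where the real work lies.
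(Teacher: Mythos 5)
Your reduction to a single coordinate is sound and your one-step computation is correct: the formula $x(Q)=\dfrac{b_{k+1}^2-b_kb_{k+2}}{2b_{k+1}-b_k-b_{k+2}}$ is exactly the recursion governing the paper's lower pentagram map with first row at infinity (compare the lemma preceding Corollary \ref{corollary:odd_indices}). But the proof stops where the theorem begins. Everything after ``Granting this:'' is the actual content of the statement, and none of it is supplied: the matrix $M$ is not constructed, the identification of one pentagram step with one condensation step is not verified, and the terminal evaluation showing that the surviving data equals $\tfrac1n\sum b_i$ is not performed --- you yourself flag this as ``where the real work lies.'' Two concrete reasons to doubt the scheme can simply be granted. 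First, Dodgson condensation of a single $n\times n$ matrix terminates in one number, its determinant, not in a pair of ``two largest minors'' whose ratio is a coordinate; what you would actually need is an octahedral-recurrence array with two prescribed boundary rows respecting the cyclic index structure, and reconciling that with the linear combinatorics of condensation is precisely the step you leave open. Second, the condensation and cluster structures known to linearize the pentagram map (\cite{Schwartz3}, \cite{Glick2}) live on projectively invariant coordinates (cross ratios, $y$-variables), whereas your target $\bar b$ is an affine quantity; extracting the affine position of the collapse point from such invariants is an additional nontrivial step the sketch does not address. So there is a genuine gap: the decoupling is established for one iteration only, and the mechanism meant to propagate it through $n-1$ iterations is asserted rather than proved.

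For comparison, the paper's proof is entirely different and avoids coordinates. It forms $n-1$ subsequences $A_1,\dots,A_{2n-3}$ of the vertices, lifts them into $\R^n$ by a \emph{parallel lifting} engineered so that all $n-1$ lifted $n$-point sets share a single common centroid $C$ lying over $\mathscr C(P)$ (Lemma \ref{lemma:parallel_lifting_of_centroids}), shows the spanned hyperplanes can be put in general position (Lemma \ref{lemma:lifting_into_general_positions}), and invokes Schwartz's slicing/mating machinery to realize $T^{n-2}(P)$ as the projection of the intersection of all $n-1$ hyperplanes with a cyclic skeleton. Since $C$ lies on every hyperplane, it lies on the line $H_{n-1,n-1}$, so both lines carrying the two halves of $T^{n-2}(P)$ project through $\mathscr C(P)$, and their intersection --- the collapse point --- is $\mathscr C(P)$. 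If you want to pursue a coordinate-decoupling proof, the viable route inside this paper is not condensation but the Menelaus projection of Section \ref{sec:lowerpentagram}: projecting the orbit $\{T^k(P)\}$ onto an axis turns the pentagram recursion into the lower pentagram map $T_1$ with first row $\infty$, after which Theorem \ref{T008} delivers $\bar b$; but note the paper proves Theorem \ref{T008} by the same lifting argument, so this would be a repackaging rather than an independent proof.
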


The key idea in the proof is based on a refinement of certain liftings that Schwartz constructed in \cite{Schwartz2} to prove the generalization of Theorem \ref{T001} that we mentioned, which is in fact really a generalization of Desargue's theorem.  We will define similar liftings of a sequence of $n$-gons into $\R^n$ in a controlled manner. The crucial point is that after these controlled liftings of the specifically chosen polygons into $\R^n$, the centroids of the lifted polygons coincide at a single point. 
\vspace*{0.1in}

In the remaining sections we generalize Theorem \ref{T002}  to higher and lower dimensional pentagram maps. We mention that in \cite{Glick1}, Glick names such collapsing phenomenons the Devron property, and proves that a higher and a lower dimensional analog of the pentagram map \cite{GSTV} both have the Devron property. Our generalization strengthens Glick's result in the following sense:  we give the specific number of iterations needed to reach the point of collapse; we also prove that the point of collapse of  an ``axis-aligned polygon'' in dimension $k$, where $k = 1$ or $k \ge 3$, equals the ``center of mass'' in some appropriate sense. 

More precisely, in Section \ref{sec:higherpentagram} we show the following theorem regarding axis-aligned polygons in $\R^m$ (we defer the precise definition of the higher pentagram map $T_m$ and axis-aligned polygons in $\R^m$ until Section \ref{sec:higherpentagram} ). 

\begin{theorem}{\label{T003}}  Let $P$ be a generic axis-aligned $mn$-gon in $\R^m$, and $T_m$ the $m$-dimensional corrugated pentagram map,  then $T_m^{n-1} (P)$ collapses to a point. Furthermore, it collapses to the center of mass of $P$, in other words, $$T_m^{n-1}(P) = \mathscr C (P).$$ 
\end{theorem}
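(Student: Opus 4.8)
The plan is to transplant to $\R^m$ the mechanism behind Theorem \ref{T002}, with the corrugated map $T_m$ in place of the planar pentagram map. Because $T_m$ and the center of mass $\mathscr C$ are both equivariant for the affine transformations that preserve the class of axis-aligned $mn$-gons, I would first normalize $P$ (say, by fixing a block of consecutive vertices in standard position) so that one can write down the iterates $P^{(k)} := T_m^{k}(P)$ explicitly enough to manipulate; each $P^{(k)}$ is again an axis-aligned $mn$-gon, the last one degenerate. Along the way I would also establish the exact iteration count, i.e.\ that the collapse happens precisely at step $n-1$, by adapting Schwartz's diagonal-map argument of \cite{Schwartz2}; this is the higher-dimensional replacement for Theorem \ref{T001} that the rest of the proof uses as input.

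The heart of the matter is a controlled lifting. For a suitable $N = N(m,n)$ I would build, for every $k$, a lift $\widehat{P^{(k)}}$ of the vertices of $P^{(k)}$ — or of a natural $n$-gon reduction of $P^{(k)}$ — into $\R^{N}$, together with a single fixed linear projection $\pi\colon \R^{N}\to\R^{m}$ recovering $P^{(k)}$, so that three things hold: (1) the lifted flats one intersects to perform a step of $T_m$ upstairs are genuine affine flats projecting onto the $m$-diagonal flats downstairs, so that all $n-1$ steps are realized by one coherent construction; (2) the step $\widehat{P^{(k)}}\mapsto\widehat{P^{(k+1)}}$ is an explicit affine rule, each new lifted vertex being a prescribed weighted average of old ones, with the weights read off from the cyclic pattern of axis directions and chosen exactly so as to kill the scaling freedom of the lift; and (3) the centroid $\tfrac{1}{mn}\sum_i \widehat{P^{(k)}}_i$ is independent of $k$. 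Property (3) is the arithmetic heart of the theorem, and it is precisely here that being axis-aligned is essential — a generic corrugated polygon admits no such centroid-preserving lift.

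Granting (1)--(3), the theorem follows at once. By the collapse statement, $P^{(n-1)}$ is a single point $p$, hence $\widehat{P^{(n-1)}}$ sits inside the affine fiber $\pi^{-1}(p)$ and so its centroid projects to $p$; by (3) that centroid equals the centroid of $\widehat{P^{(0)}}=\widehat P$, which by construction of $\pi$ projects to $\tfrac{1}{mn}\sum_i P_i = \mathscr C(P)$. Applying $\pi$ yields $p = \mathscr C(P)$, which is Theorem \ref{T003}. One could instead organize everything as an induction on $m$ with Theorem \ref{T002} as the base case, should the corrugated structure permit stripping off one coordinate at a time; I expect the direct construction to be cleaner.

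The step I anticipate as the main obstacle is constructing the lift so that (2) and (3) hold simultaneously: one must pin down the correct normalization at each of the $mn$ vertices — the analog of Schwartz's lift in \cite{Schwartz2}, but sharpened so that the barycenter, and not merely the incidence data, is carried along — and then verify, through a finite but intricate linear-algebra computation organized by the period-$m$ cycle of edge directions, that a single step of the lifted map is affine and fixes the centroid. A secondary technicality is tracking genericity: one must show that for $P$ off a proper subvariety every intersection defining $T_m^{k}(P)$ and every lift stays transverse for $0\le k\le n-1$, so that the identities above are legitimate.
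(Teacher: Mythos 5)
Your overall instinct---establish the collapse count by a Schwartz-style lifting and then locate the collapse point by carrying barycentric information through a controlled lift---is the right one, and it is the strategy of the paper. But the specific mechanism you propose in properties (2)--(3) is not the paper's mechanism, and as stated it cannot work. You ask for a lift $\widehat{P^{(k)}}$ of \emph{every} iterate, with one fixed linear projection $\pi$ recovering $P^{(k)}$, such that the centroid $\tfrac{1}{mn}\sum_i \widehat{P^{(k)}}_i$ is independent of $k$. Since $\pi$ is linear, this would force the downstairs centroid $\tfrac{1}{mn}\sum_i P^{(k)}_i$ to be independent of $k$, i.e.\ the corrugated pentagram map would have to preserve the centroid of an axis-aligned polygon at \emph{every} step. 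That is already false for $m=2$, $n=3$: for the axis-aligned hexagon with vertices $(0,0),(3,0),(3,1),(5,1),(5,4),(0,4)$ one has $\mathscr C(P)=(8/3,5/3)$, while the vertices of $T(P)$ have first coordinates $9,\,2,\,25/7,\,20/7,\,15/8,\,12/5$, whose mean is $\approx 3.62\neq 8/3$. The map is projective, not affine, and no iterate-by-iterate conservation law of the form (3) holds; likewise (2) is not available, since intersections of flats are not fixed weighted averages of the defining points. The ``$n$-gon reduction'' caveat does not rescue this: the alternating sub-triangles of $T(P)$ in the same example have centroids $\approx(4.82,\ast)$ and $\approx(2.42,\ast)$, neither equal to $\mathscr C(P)$.

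The argument that actually works (Section 3 of the paper, mirroring Section 2) lifts \emph{only the initial data}, never the iterates. One forms $n-1$ sequences of $n$ points from the vertices of $P$ (every $m$-th vertex, with shifted starting indices) and lifts them by a \emph{parallel lifting} into $\R^n$, so that (a) corresponding points receive equal heights, whence all $n-1$ lifted $n$-point sets share one centroid $C$ with $\pi(C)=\mathscr C(P)$, and (b) the $n-1$ spanned hyperplanes are in general position (this is the delicate genericity lemma). The entire orbit of $T_m$ is then recovered from intersections of these hyperplanes with cyclic skeletons of the associated prisms, via the mating-process lemmas. The centroid enters exactly once and globally: $C$ lies in each hyperplane (being the centroid of the $n$ points spanning it), hence on the line obtained by intersecting all $n-1$ of them; that line projects to a line through $\mathscr C(P)$ containing the relevant vertices of $T_m^{n-2}(P)$, and the $m$ such lines concur at $\mathscr C(P)$, which is therefore the collapse point. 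Your proposal is missing this one-shot incidence argument, and it also does not address the case $2\le n<m$, where one cannot lift into $\R^n\supset\R^m$ at all and must instead restrict to the $n$-dimensional subspace of $\R^m$ spanned by the chosen points.
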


In Section \ref{sec:lowerpentagram} of the paper, we discuss the lower pentagram map.  While the higher dimensional pentagram map is defined using a similar geometric construction in $\P^m$, the lower dimensional analog is a certain system defined algebraically on points in $\P^1$, by Gekhtman, Shapiro, Tabachnikov and Vainshtein in \cite{GSTV}.

In this paper, however, we use a geometric construction to reinterpret the lower pentagram map, by defining the mirror-pentagram (MP) map. This provides a way to construct  $1$-diagonals in some appropriate sense, and to some extent, gives the correct geometric definition of the pentagram map in dimension 1.  We prove a similar result to Theorem \ref{T002} for the MP map. Consequently,  we get the following interesting result (Theorem \ref{T005}) as a corollary: \\

Fix an arbitrary positive integer $n$, consider two sequences of cyclically ordered  $n$ points in $\P^1$: $$A_0 = \left\{ X_{(0, 2)}, X_{(0, 4)}, X_{(0, 6)}, ... , X_{(0, 2n)} \right\} $$ and $$A_1 = \left\{ X_{(1, 1)}, X_{(1, 3)}, X_{(1, 5)}, ... , X_{(1, 2n-1)} \right\} .$$  We first form two infinite sequences of numbers by repeating $A_0$ and $A_1$ repetitively (cyclically) and arrange them in the following patterns in the plane, then inductively generate (possibly) infinite collections of $n$ repeated points:

\begin{equation*}
\arraycolsep=0.3pt
\medmuskip = 1mu
\begin{array}{ccccccccccccccccccccccc}
...  \hspace*{0.15cm}&& X_{(0, 2n)}  & & X_{(0,2)}  & & X_{(0,4)}  & & ...  & &  X_{(0,2n)} & & X_{(0, 2)} & &  \\
& && X_{(1, 1)} & & X_{(1, 3)} & & ... & &... && X_{(1, 1)} && ...  \\
...   \hspace*{0.15cm}& & X_{(2, 2n)}  & & X_{(2,2)}  & & X_{(2,4)}  & & ...  & &  X_{(2,2n)} & & X_{(2, 2)} & &  \\
& & && && \vdots  \\
& && X_{(2m-1, 1)} & & X_{(2m-1, 3)} & & ... & &... && X_{(2m-1, 1)} && ...  \\
...  \hspace*{0.15cm}& & X_{(2m, 2n)}  & & X_{(2m,2)}  & & X_{(2m,4)}  & & ...  & &  X_{(2m,2n)} & & X_{(2m, 2)} & &  \\
& & && && \vdots  
\end{array}
\end{equation*}\\ 
where the row $A_{i+1}$ is determined by row $A_{i-1}$ and $A_{i}$ by the following rules: for each diamond pattern:
$$ \arraycolsep=1pt
\begin{array}{cccccc}
&  X_{(i-1, k)} \\
X_{(i, k-1)} && X_{(i, k+1)} \\
& X_{(i+1, k)}
\end{array}, $$
we require the cross ratio $$[X_{(i-1, k)}, X_{(i, k-1)}, X_{(i+1, k)},X_{(i, k+1)}] = -1,$$ where the cross ratio of $a, b, c, d \in \P^1$ is defined to be $$[a, b, c, d] = \frac{(a - b) (c-d)}{(b-c) (d-a)}.$$ Such systems are considered, for example, in \cite{ABS} by Adler, Bobenko and Suris. 

\begin{remark} Our system starts with a constant row and ends with another constant row, which is similar in spirit to the celebrated frieze pattern discovered by Conway and Coxeter in \cite{CC}. For this reason, we denote our system by \emph{cross ratio frieze pattern}, in honour of Conway and Coxeter. 
\end{remark}

Now we can state the corollary we prove in Section \ref{sec:lowerpentagram}:

\begin{theorem}{\label{T005}} Let $A_0$ and $A_1$ be as defined, which are the two starting rows in the cross ratio frieze pattern. Suppose that $$X_{(0,2)}  =  X_{(0,4)} = ... = X_{(0,2n)}  = \infty \in \P^1,$$ then $$X_{(2n-1,k)} = X_{(2n, k+1)} = \frac{1}{n} \Big(X_{(1,1)} + X_{(1,3)} + ... +  X_{(1, 2n-1)}\Big) $$ for all $k =1, 3, ..., 2n-1$. 
\end{theorem}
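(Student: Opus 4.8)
The plan is to deduce Theorem~\ref{T005} from the mirror‑pentagram (MP) analog of Theorem~\ref{T002}, and to prove that analog by the lifting strategy indicated for Theorem~\ref{T002}. First I would fix the dictionary between the cross‑ratio frieze pattern and the MP map: a state of the MP map is a cyclically ordered $n$‑tuple of points in $\P^1$ (one row $A_i$), the defining rule of the MP map is exactly the requirement that the cross ratio on every diamond equal $-1$, and one application of the suitably normalized MP map $T_{\mathrm{MP}}$ corresponds to moving down two rows, so that $(A_{2j-1},A_{2j})=T_{\mathrm{MP}}^{\,j-1}(A_1,A_2)$ for $j\ge 1$. The hypothesis $X_{(0,2)}=\cdots=X_{(0,2n)}=\infty$ says that one step backwards along the frieze the configuration degenerates, all of its points coinciding at $\infty$; this is the MP incarnation of the Remark that $T^{-1}(A)$ is the line at infinity for an axis‑aligned $A$, so $(A_1,A_2)$ is ``axis‑aligned'' in the MP sense. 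Moreover, when $A_0\equiv\infty$ the diamonds with top in row $0$ force $X_{(2,k)}=\tfrac12\bigl(X_{(1,k-1)}+X_{(1,k+1)}\bigr)$, i.e.\ $A_2$ is the row of consecutive midpoints of $A_1$; this is precisely the normalization underlying the definition of $\mathscr C$, so the center of mass of the configuration $(A_1,A_2)$ comes out to be $\tfrac1n\sum_{j=1}^{n}X_{(1,2j-1)}$.

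Granting the MP analog of Theorem~\ref{T002} --- a generic axis‑aligned MP configuration of this size collapses to a single point after $n-1$ applications of $T_{\mathrm{MP}}$, and that point is the center of mass --- the theorem follows at once: by the dictionary $T_{\mathrm{MP}}^{\,n-1}(A_1,A_2)=(A_{2n-1},A_{2n})$, ``collapse to a point'' means every entry of $A_{2n-1}$ and $A_{2n}$ equals that point, and by the previous paragraph that point is $\tfrac1n\sum_j X_{(1,2j-1)}$. The degenerate cross ratios of the form $0/0$ that appear at and below the collapsing row are to be read as the limit, i.e.\ the closure of the generic orbit; and since both sides depend rationally on the free parameters $X_{(1,2j-1)}$, the identity, once verified generically, holds wherever it is defined.

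The substantive work, then, is the MP analog of Theorem~\ref{T002}, and here I would follow the route sketched for Theorem~\ref{T002} in the MP setting: construct controlled liftings of the successive rows $A_i$ from $\P^1$ into $\R^n$ --- a lift turning each $n$‑tuple of points in $\P^1$ into an $n$‑tuple of points in $\R^n$ --- chosen so that the MP relations downstairs become Desargues‑type incidences upstairs, and arranged so that \emph{the centroids of all the lifted rows coincide at one point} of $\R^n$. Then the eventual collapse of the configuration (either invoked from Glick's Devron‑property result for the lower pentagram map \cite{Glick1}, or re‑proved alongside so as to pin down the sharp count $n-1$) forces the limiting single point to be the image of that common centroid, and projecting back to $\P^1$ identifies it with $\tfrac1n\sum_j X_{(1,2j-1)}$.

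The main obstacle I anticipate is exactly this lifting step: unlike the honestly projective pentagram map on $\P^2$, the MP map lives on $\P^1$ and is defined by a cross‑ratio condition, so one must first pin down the correct geometric ``mirror'' construction and the correct lift into $\R^n$, and then verify --- this is the crux, where a genuine computation seems unavoidable --- that successive lifted centroids really do agree. A subsidiary but necessary point is bookkeeping: getting the parity/offset of odd and even rows right, checking that ``$T_{\mathrm{MP}}$ equals two rows down'' is consistent with the indices $2n-1,2n$ in the statement, and confirming that the MP notions of ``axis‑aligned'' and ``center of mass'' specialize, under $A_0\equiv\infty$, to the plain midpoint normalization and the plain average appearing in Theorem~\ref{T005}.
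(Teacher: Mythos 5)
Your overall architecture is the same as the paper's: reduce the frieze pattern to a pentagram-type dynamical system on $\P^1$, realize that system geometrically (the mirror pentagram map), and prove collapse-to-centroid by a controlled lifting into $\R^n$ in which the centroids of all lifted rows coincide. You have also correctly isolated the crux of the analytic work (the lifting with coincident centroids, which is the paper's Theorem \ref{T007}) and correctly read off the normalization $X_{(2,k)}=\tfrac12\bigl(X_{(1,k-1)}+X_{(1,k+1)}\bigr)$ from the hypothesis $A_0\equiv\infty$.

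However, there is a genuine gap in your ``dictionary'' step, and it is not mere bookkeeping. You assert that ``the defining rule of the MP map is exactly the requirement that the cross ratio on every diamond equal $-1$.'' That is not the definition of either map in play: the lower pentagram map $T_1$ is defined by a \emph{six}-point cross ratio $[X_i,Y_i,Y_{i-1},Z_i,Y_i,Y_{i+1}]=-1$ relating three rows of the \emph{same} parity, while the frieze is built from \emph{four}-point cross ratios on diamonds relating three consecutive rows. The statement that two diamond steps compose to one $T_1$ step --- i.e.\ that $T_1(A_{i-2},A_i)=(A_i,A_{i+2})$ --- is a substantive identity between rational functions; the paper proves it as Lemma \ref{lemma:grid_embedding} by normalizing $V_2=\infty$ and explicitly solving the four diamond relations for $W_2$, then checking the six-term relation. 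Moreover, because $T_1$ acts on same-parity pairs, the frieze splits into \emph{two} interleaved orbits: the even rows, started at $(A_0,A_2)$, and the odd rows, which require constructing a phantom all-$\infty$ row $A_{-1}$ and proving $T_1(A_{-1},A_1)=(A_1,A_3)$ (Corollary \ref{corollary:odd_indices}, which needs its own small computation). One then applies Theorem \ref{T008} twice and uses $\sum_k X_{(1,2k-1)}=\sum_k X_{(2,2k)}$ (true because $A_2$ is the midpoint row of $A_1$) to see that the two constant rows agree. Your proposed single map sending $(A_1,A_2)\mapsto(A_3,A_4)$ is well defined as the square of the one-row frieze step, but it is not the map for which the collapse theorem is proved, and nothing in your proposal bridges that difference; without the analogue of Lemma \ref{lemma:grid_embedding} the invocation of the collapse theorem does not apply to the frieze at all.
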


\begin{example}
We end the introduction with an example illustrating Theorem \ref{T005},   where $n = 3$ and $A_1 =\{7, 5, -3\}$:  \\
\begin{center}
\includegraphics[scale=0.75]{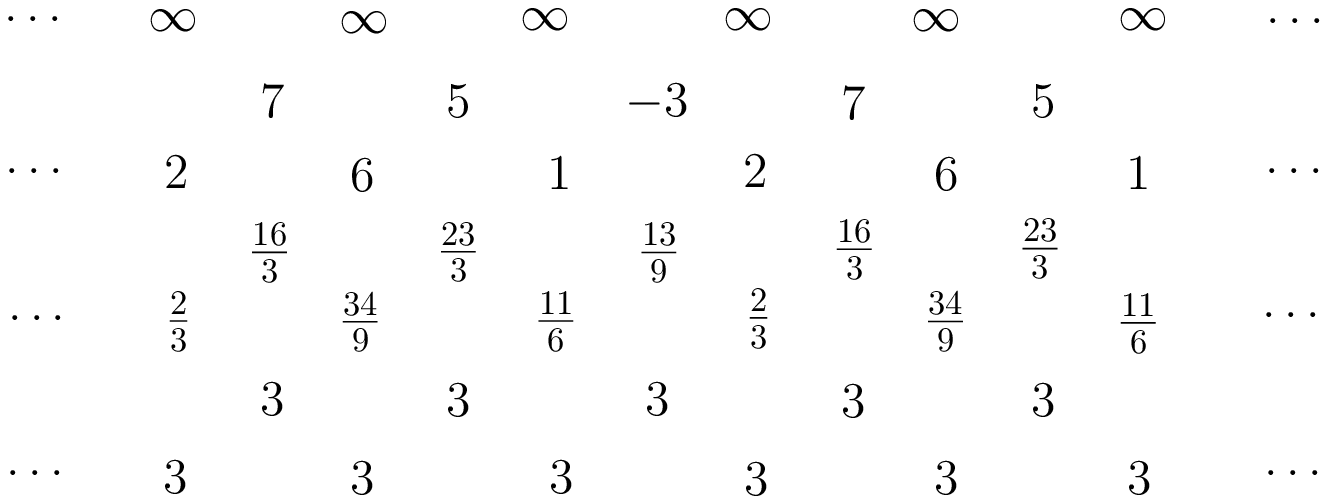} 
\end{center}  
\vspace*{0.1cm}
As the theorem predicted, both $A_5$ and $A_6$ are constant rows consisting of $$3 = \frac{1}{3} (7+5-3).$$ 
\end{example}
\vspace*{0.5cm}


\section{{The point of collapse of the pentagram map}} \label{sec:pentagram}

In this section, we prove Theorem \ref{T002} which states that the point of collapse of an axis-aligned polygon equals its center of mass. As remarked in the introduction, the proof of Theorem \ref{T002} relies on a refinement of a construction in \cite{Schwartz2} by Schwartz, where he considered a certain collection of $np$ lines that go through $p$ points for $p \ge 3$. He showed that under a predictable iterations of the $p$-diagonal map, these $np$ lines turn into a collection of $np$ points that sit on $p$ lines. The proof involves two parts, the first part (upper bound) says that the phenomenon above has to happen before a certain number of iterations, and the other part says that for each $p \ge 3$, there exists a constructible example where the upper bound is reached.  


For the upper bound, Schwartz considered a finite collection of sets of $k$ points, and lifted each set of $k$ points into general positions in $\R^k$, so the $k$ lifted points in $\R^k$ span a hyperplane. He then considered the hyperplanes spanned by the lifted sets of points. It is easy to show that generically any $d$ hyperplanes intersect transversally to form a $d$ flat (a codimension $d$ plane) in $\R^k$, and that the $d$ flat again intersects with certain faces of polytopes in space transversely to form finitely many points. He proveded the collapsing theorem by projecting these points back to $\R^2$ and realizing that the $p$-diagonal map is giving by taking such intersections of hyperplanes. 

We point out that the liftings defined in \cite{Schwartz2} do not imply our theorem, since the liftings we require are more constrained. More specifically, we have the additional requirements on the collection of points that some points degenerate in $\R^2$, and on the liftings into $\R^n$ such that certain points have the same $x_3, x_4, ..., x_n$ coordinates. The latter assumption is crucial for us to keep track of the center of mass for certain polytopes in $\R^n$. Under these assumptions, it is not immediately clear that the hyperplanes spanned by certain lifted points are in general position.  
  
\begin{outline}[Theorem \ref{T002} ]
First we form $n-1$ sequences $A_1, A_3, ..., A_{2n-3}$ of $n$ points in $\R^2$ from the $2n$ vertices of a given axis-aligned polygon $P$. We use a controlled lifting (the \emph{parallel lifting}) to lift each $A_j$ into $\widetilde A_j \subset \R^n$. Lemma \ref{lemma:parallel_lifting_of_centroids} asserts that for all $j$, the center of masses of the lifted $n$ points in $A_j$ coincide.  The $n$ points in $\widetilde A_j $ span a hyperplane in $\R^n$, and Lemma \ref{lemma:lifting_into_general_positions} says that any subset of the hyperplanes intersect transversely. The parallel lifting of all the $(n-1)n$ points that we started with also form certain \emph{prisms} in $\R^n$. We take intersections of several hyperplanes, and then intersect with certain faces (\emph{cyclic skeletons}) of the prisms to get $n$ points in space. Lemma \ref{lemma:perfect_lifting_existence} guarantees that what we get from the intersection process are always $n$ points. Then we project the $n$ points back to $\R^2$. Lemma  \ref{lemma:intersection_gives_mating_process} and \ref{lemma:fully_sliced_mating_process} shows that iterations of the pentagram map on the polygon $P$ can be obtained by projecting these points. In the end we carefully keep track of where the center of mass of the polygon gets lifted and projected into, and the theorem follows from the fact that in $\R^n$, the lifted center of mass ``gets stuck'' on the line of intersection of all the $n-1$ hyperplanes. 
\end{outline}

\subsection{Proof of Theorem \ref{T002}} \label{subsection:2.1}

We proceed to the proof by first defining several terms, the first three of which are given by Schwartz in \cite{Schwartz2}.

\begin{definition}
A collection of $n$ points in $\R^2$ in general position is called an \emph{$n$-point.} In general, for $2 \le m \le n$, an $n$-point in $\R^m$ is defined similarly. 
\end{definition}

\begin{definition} A sequence of $n$ points in $\R^n$ in general position is called an $n$-\emph{joint}. A sequence of $n$ parallel lines in $\R^n$  in general position is called a \emph{prism}. 
\end{definition}

\begin{definition}
Let $J_1, J_3$ be 2 $n$-joints, then write $J_1J_3$ as the collection of $n$ lines formed by corresponding pairs of points from $J_1$ and $J_3$. A sequence of m $n$-joints $J_1, J_3 ..., J_{2m-1}$ is called an m-\emph{polyjoint} if each $J_{2k-1} J_{2k+1}$ is a prism. 
\end{definition}

Now consider an axis-aligned polygon $P = P_1 P_3 .... P_{4n-1} \in \R^2 \subset \R^n$, where we embed $\R^2$ in $\R^n$ by adding $0$ to $x_3, x_4, ..., x_n$ coordinates. Consider the following $(n-1)$ sequences of $n$-points (where elements are given in their linear order): 
\vspace*{0.1cm}
\begin{eqnarray*} & A_1 &= \left\{P_1, P_5, ..., P_{4n -7}, P_{4n -3}\right\}  \\&  A_3 & = \left\{P_3, P_7, ...,P_{4n -5}, P_{4n-1} \right\} \\ & A_5 &= \left\{P_5, P_9, ..., P_{4n-3}, P_1\right\} \\  && \quad  \ldots \\ & A_{2n-3}  &= \left\{P_{2n-3}, P_{2n+1}, ...,  P_{2n -7} \right\} \qquad  
\end{eqnarray*}

Note that as sets, $A_1 = A_5 = ...$ , but as linearly ordered sequences, they are pairwise distinct. Let $A $ be the collection $A = \left\{ A_1,  A_3, ... , A_{2n-1} \right\}$, so $A$ has $(n-1) $ elements and each element is one of the $A_{j}$, hence $A$ contains $(n-1)n$ points regarded as a multi-set of points. We want to lift all points in $A$ simultaneously into $\R^n$. In order to differentiate points $P_k$ in different ordered set $A_j$, we label a point $P_k$ by $P_k(j)$ if $P_k \in A_j$. Hence we have   
\begin{eqnarray*} & A_1  &= \left\{P_1(1),\quad  P_5(1), \quad ..., \quad P_{4n-7}(1), \quad P_{4n-3}(1)\right\}   \\& A_3 & = \left\{P_3(3), \quad P_7(3), \quad ..., \quad P_{4n -5}(3), \quad P_{4n-1}(3) \right\} \\ &A_5  &= \left\{P_5(5), \quad P_9(5), \quad ..., \quad P_{4n-3}(5), \quad P_1(5)\right\} \\  && \quad \ldots \\ & A_{2n-3}  & = \left\{P_{2n-3}(2n-3),  P_{2n+1}(2n-3),  ...,  P_{2n -7} (2n-3)\right\} \quad 
\end{eqnarray*}  
and we can think of $P_k (j), P_k (j+4), ...$ as different points but coincide in $\R^2 \subset \R^n$.  Note that all points in $A$ have coordinates $(x_1, x_2, 0, ..., 0)$ for some $x_1, x_2$. \\

We define a \emph{lifting} of $A$ to be a way to lift each $A_k$ into a joint in $\R^n$ while fixing the $x_1, x_2$ -coordinates for every point. In other words, let $\pi: \R^n \rightarrow \R^2$ be the canonical projection sending $(x_1, x_2, x_3, ..., x_n) \mapsto (x_1, x_2, 0, ..., 0)$, then a lifting of the points $P_k(J)$ results in points $\widetilde P_k(j)$ such that $\pi (\widetilde P_k(j)) = P_k(j)$.

\begin{definition}
A \emph{parallel lifting} $L$ of the sequence of $n$-points $A_1, A_3, ..., A_{2n-3}$ is a family of liftings defined as follows: $L$ sends $A_1, ..., A_{2n-3}$ to $\widetilde A_1, ..., \widetilde A_{2n-3}$, where the $l^{th}$ point of each $\widetilde A_{k}$ has the same $x_3, x_4, ..., x_n$ coordinates, for all $l =1, 2, ..., n$. This is to say that, we take the points that occupy the same position in each $A_k$, and lift them to the same \emph{height}, by which we mean the same $x_3$ to $x_n$ coordinates. 
\end{definition}

For example, suppose that in $\R^2$ the points $P_{2j-1}$ have coordinates $(\alpha_{2j-1}, \beta_{2j-1})$, then parallel lifting requires (on the first element of each $A_k$) that, in Cartesian coordinates: 
\begin{align*}
\widetilde P_{1}(1) & = (\alpha_1, \beta_1, c_3, c_4, ..., c_n), \, \\ \widetilde P_3(3) & =  (\alpha_3, \beta_3, c_3, c_4, ..., c_n), \\ & \ldots \\ \widetilde P_{2n-3}(2n-3) & = (\alpha_{2n-3}, \beta_{2n-3}, c_3, c_4, ..., c_n) 
\end{align*}
where $c_3, ..., c_n \in \R^n$. Similarly, the points $\widetilde P_5(1), \widetilde P_{7}(3), ..., \widetilde P_{2n+1} (2n-3)$ will have the same \emph{height}, and so forth. 


\begin{lemma}{\label{lemma:parallel_lifting_into_joint}} A parallel lifting lifts the sequence of $n$-points defined above $A_1, A_3, ..., $ $ A_{2n-3}$ to a polyjoint $(\widetilde A_1, \widetilde A_3, ..., \widetilde A_{2n-3} ).$
\end{lemma}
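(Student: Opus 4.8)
The plan is to unwind the definitions and verify the two conditions that make $(\widetilde A_1, \ldots, \widetilde A_{2n-3})$ a polyjoint: first, that each $\widetilde A_{2k-1}$ is an $n$-joint (i.e., $n$ points in general position in $\R^n$), and second, that each consecutive product $\widetilde A_{2k-1}\widetilde A_{2k+1}$ is a prism (i.e., $n$ \emph{parallel} lines in general position). The genericity hypothesis on the original axis-aligned polygon $P$, together with a generic choice of the lifting heights $c_3, \ldots, c_n$ for each of the $n$ ``slots'', should make the general-position requirements automatic, so the real content is the parallelism.

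First I would observe that the key structural feature of the parallel lifting is that the $l$-th point of every $\widetilde A_{2k-1}$ has the same $x_3, \ldots, x_n$ coordinates — call this common height vector $h_l \in \R^{n-2}$. Now consider the line through the $l$-th point of $\widetilde A_{2k-1}$ and the $l$-th point of $\widetilde A_{2k+1}$ (these are the corresponding pairs of points that form $\widetilde A_{2k-1}\widetilde A_{2k+1}$). By construction both endpoints have last $n-2$ coordinates equal to $h_l$. Hence this line lies entirely in the affine plane $\{(x_1, x_2, h_l) : x_1, x_2 \in \R\}$, and more importantly its direction vector has zero in the last $n-2$ coordinates: it is of the form $(\ast, \ast, 0, \ldots, 0)$. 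Wait — that only shows each such line is ``horizontal'', not that the $n$ different lines (for $l = 1, \ldots, n$) are mutually parallel. So I need to be more careful: I would instead argue that we are free to choose the lifting so that the resulting lines are parallel, or else note that the relevant notion of ``prism'' only requires the lines to be parallel as a consequence of how the $A_j$'s were set up. Let me reconsider: the points of $A_{2k-1}$ and $A_{2k+1}$ as subsets of $\R^2$ are literally the same set of vertices of $P$ but shifted cyclically by one position, so the line joining the $l$-th point of $A_{2k-1}$ to the $l$-th point of $A_{2k+1}$ in $\R^2$ is a $2$-diagonal of $P$, and there is no reason these are parallel downstairs. The parallelism must be arranged upstairs by the freedom in choosing the heights — and indeed a prism is defined as $n$ parallel lines ``in general position'', so the lifting is allowed (even required) to make them parallel. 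So the correct statement of the proof is: we show that the heights can be (generically) chosen so that for each $k$, the $n$ lines of $\widetilde A_{2k-1}\widetilde A_{2k+1}$ are parallel, and that with a generic such choice all the general-position conditions hold simultaneously.

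Concretely, I would set up the heights as follows. The parallel-lifting constraint ties together the heights across the different $\widetilde A_{2k-1}$: there are $n$ height vectors $h_1, \ldots, h_n \in \R^{n-2}$, one per slot, shared by all $2n-3$ joints. For the pair $(\widetilde A_{2k-1}, \widetilde A_{2k+1})$, the $l$-th line is parallel to the $l'$-th line iff the corresponding direction vectors in $\R^n$ are proportional; since the $x_3,\ldots,x_n$ components are determined by $h_l - h_l$-type differences this becomes a finite system of linear conditions on the $h_l$ relating them to the known $x_1, x_2$ coordinates of the polygon vertices. Then I would argue that this system has solutions (e.g., choosing all $h_l$ to lie on a common affine line in $\R^{n-2}$ in a controlled way makes each family automatically parallel, essentially because affine combinations are preserved), and that the solution set is positive-dimensional so a generic point in it also satisfies the open general-position conditions needed for $n$-joints and for the lines to be in general position within each prism. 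The main obstacle I anticipate is precisely bookkeeping the parallelism across \emph{all} $2n-3$ consecutive pairs at once using a \emph{single} shared family of heights $h_1, \ldots, h_n$: one must check these constraints are mutually consistent and do not force a degeneration, and it is here that the specific cyclic structure of the $A_j$'s (the fact that $A_{2k+1}$ is $A_{2k-1}$ shifted by one and that as sets $A_1 = A_5 = \cdots$) is used crucially. Everything else — that $n$ generic points span $\R^n$, that generic lines are in general position — is routine and follows from the heights varying in a nonempty Zariski-open subset of the solution variety.
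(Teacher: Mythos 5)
There is a genuine error here, and it is exactly at the point where you talked yourself out of the right argument. You correctly observed that under a parallel lifting the segment from the $l$-th point of $\widetilde A_{2k-1}$ to the $l$-th point of $\widetilde A_{2k+1}$ has direction vector of the form $(\ast,\ast,0,\dots,0)$, but you then asserted that the corresponding segment downstairs is a $2$-diagonal of $P$ and hence that the segments for different $l$ need not be parallel. That identification is wrong. With $A_k(j)=P_{k+4(j-1)}$, the $j$-th points of $A_1$ and $A_3$ are $P_{4j-3}$ and $P_{4j-1}$, which are \emph{adjacent} vertices of $P$; the segment joining them is the edge labelled $P_{4j-2}$. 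The $2$-diagonals are the lines joining \emph{consecutive elements within a single} $A_k$ (e.g.\ $P_1P_5$ inside $A_1$); those are the lines used later in the mating process, not the lines of the prism $\widetilde A_k\widetilde A_{k+2}$. Since $P$ is axis-aligned with edges parallel to the coordinate axes, the $n$ lines $A_k(1)A_{k+2}(1),\dots,A_k(n)A_{k+2}(n)$ are the alternate edges $P_{4j-2}$ (resp.\ $P_{4j}$), all parallel to the $x_1$-axis (resp.\ $x_2$-axis) depending on the parity of $(k+1)/2$. Combined with your observation that the lifted segments have zero last $n-2$ coordinates in their direction, each lifted line is parallel to that same coordinate axis in $\R^n$, so each $\widetilde A_k\widetilde A_{k+2}$ is a prism. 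That is the entirety of the paper's proof.

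The repair strategy you proposed instead --- choosing the shared heights $h_1,\dots,h_n$ so as to \emph{force} the lifted lines to be parallel --- cannot work even in principle. A lifting is required to fix the $x_1,x_2$ coordinates, so the line through $\widetilde A_k(j)$ and $\widetilde A_{k+2}(j)$ always projects under $\pi$ onto the line through $A_k(j)$ and $A_{k+2}(j)$, and $\pi$ sends parallel lines to parallel lines (or points). Hence if the downstairs lines were not already parallel, no choice of heights could make the upstairs ones parallel. Fortunately, as explained above, they already are; no genericity, no linear system in the $h_l$, and no consistency check across the $2n-3$ pairs is needed for this lemma (general position of the lifted hyperplanes is a separate matter, handled later in Lemma \ref{lemma:lifting_into_general_positions}).
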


\begin{proof} Let the $j^{th}$ element of $A_{k}$ be $A_k(j)$ for $1 \le j \le n$, and similarly define $\widetilde A_k (j)$. Hence $A_k(j) = P_{k+4(j-1)}(k)$ by definition of notations. By construction, for each odd $k, 1 \le k \le 2n-1$, we know that the $n$ lines $$A_k(1)A_{k+2}(1), A_k(2)A_{k+2}(2), ... , A_k(n)A_{k+2}(n)$$ are parallel, to either $x_1$-axis or $x_2$-axis in $\R^n$ depending on $k$. The parallel lifting is defined such that the line through the corresponding pairs $\widetilde A_k (j)$ and $\widetilde A_{k+2}(j)$ is parallel to $A_k(j) A_{k+2} (j)$, hence each $\widetilde A_k \widetilde A_{k+2}$ is a prism in $\R^n$. 
\end{proof}


Our next lemma explains why a parallel lifting is required.

\begin{lemma}{\label{lemma:parallel_lifting_of_centroids}} Suppose that a polyjoint $(\widetilde A_1, \widetilde A_3, ..., \widetilde A_{2n-3} )$ is lifted from $A_1, A_3, ..., $ $A_{2n-3}$ by a parallel lifting. Let $ C_i \in \R^n$ be the centroid of the $n$-vertices in $\widetilde A_i$ for each $i = 1, 3, ..., 2n -3$.  Then
\begin{equation*}  C_1 = C_3 = ... = C_{2n-3}; \quad \text{and }
 \pi (C_1) = ... = \pi (C_{2n-3}) = \mathscr C (P)
\end{equation*}
\end{lemma}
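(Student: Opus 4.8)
The plan is to compute the centroids directly from the definition of the parallel lifting, exploiting the fact that the lift only changes the $x_3,\ldots,x_n$ coordinates and does so in a way that depends only on the position $j$ within each sequence $A_k$, not on $k$. First I would fix, for each position $j=1,\ldots,n$, the common height vector $h_j=(c_3^{(j)},\ldots,c_n^{(j)})\in\R^{n-2}$ assigned by $L$ to the $j$-th point of every $\widetilde A_k$; this is exactly the content of the definition of parallel lifting. Then $\widetilde A_k(j)=\big(\alpha_{k+4(j-1)},\,\beta_{k+4(j-1)},\,h_j\big)$ in Cartesian coordinates, where $(\alpha_\ell,\beta_\ell)$ are the planar coordinates of $P_\ell$ (indices mod $4n$). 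The centroid is $C_k=\frac1n\sum_{j=1}^n \widetilde A_k(j)$, and its last $n-2$ coordinates are $\frac1n\sum_{j=1}^n h_j$, which is manifestly independent of $k$. This gives the ``vertical'' part of the equality $C_1=C_3=\cdots=C_{2n-3}$ for free.

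Next I would handle the first two coordinates, i.e. the claim that $\pi(C_k)$ is the same for every odd $k$ in the range and equals $\mathscr C(P)$. Since $\pi$ is linear, $\pi(C_k)=\frac1n\sum_{j=1}^n P_{k+4(j-1)}$, a sum over every fourth vertex of $P$ starting at $P_k$. The key observation is a reindexing/partition argument: as $k$ ranges over $1,3,5,\ldots,2n-3$ — that is, over $n-1$ values, with the understanding (as in the setup) that $A_1,A_5,\ldots$ agree as sets, etc. — I would argue that the index sets $\{k+4(j-1)\bmod 4n : j=1,\ldots,n\}$ are precisely the residue classes, and that the multiset union of the relevant classes covers each of the $2n$ vertices $P_1,P_3,\ldots,P_{4n-1}$ the same number of times; equivalently, I should directly show that for each individual $k$, $\frac1n\sum_j P_{k+4(j-1)}=\frac{1}{2n}\sum_{i}P_{2i-1}$. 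The cleanest route is probably to observe that $\{k+4(j-1):j=1,\ldots,n\}$ splits the $2n$ odd residues mod $4n$ into two arithmetic progressions of step $4$ (those congruent to $k$ and those congruent to $k+2$ mod $4$), and to use the axis-aligned hypothesis $x_{2k-1}=x_{2k}$, $y_{2k}=y_{2k+1}$ — rephrased in this paper's indexing as the coincidences among $\alpha$'s and $\beta$'s of consecutive labelled vertices — to show each such step-$4$ progression already averages to the correct coordinate. Concretely, the vertices of an axis-aligned polygon come in pairs sharing an $x$-coordinate and (shifted) pairs sharing a $y$-coordinate, so the average of every fourth vertex reproduces $\big(\frac{1}{2n}\sum x_i,\frac{1}{2n}\sum y_i\big)=\mathscr C(P)$ by the Remark following the Conjecture.

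I expect the main obstacle to be purely bookkeeping: getting the index arithmetic mod $4n$ exactly right so that ``every fourth vertex starting at $P_k$'' is correctly matched with the pairing structure of the axis-aligned polygon, and making sure the $n-1$ sequences $A_1,\ldots,A_{2n-3}$ (with their repetitions at the set level) are being summed with the right multiplicities. There is no analytic or geometric difficulty once the combinatorics is pinned down — the lemma is essentially the statement that the parallel lifting was engineered precisely so that all these partial averages agree, vertically by construction and horizontally by the definition of $\mathscr C(P)$ together with the axis-aligned symmetry. I would therefore devote most of the write-up to a clean statement of the indexing conventions and a short computation verifying $\pi(C_k)=\mathscr C(P)$ for one representative $k$, then note that the same computation applies verbatim to the others.
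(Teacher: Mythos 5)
Your proposal is correct and takes essentially the same route as the paper: the vertical coordinates of the centroids agree by the very definition of the parallel lifting, and the planar parts agree because the axis-aligned pairing of coordinates forces the average over each of the two alternating halves of the vertex set (the underlying sets of $A_1$ and of $A_3$) to equal $\mathscr C(P)$, with $A_1=A_5=\cdots$ and $A_3=A_7=\cdots$ as sets handling the remaining indices. The only quibble is terminological: each $A_k$ consists of every \emph{other} vertex of the $2n$-gon (the index step of $4$ skips one vertex, since vertices carry only odd labels), and for a fixed $k$ the set $\{k+4(j-1)\}$ is a single step-$4$ progression, one of the two classes into which the odd residues split.
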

\begin{proof}
Note that $\pi (C_i)$ is precisely the centroid of $A_i$. Since $P$ is axis-aligned, we know that $\pi(C_1) = \pi (C_3)$. Since $A_1 = A_5 = ...$ and $A_3 = A_7 = ...$ as sets, it is clear that the second part of the lemma holds. 

For the first part of the lemma we only need to show that each $C_i$ has the same $x_j$ coordinates for $j \ge 3$, which follows immediately from the definition of the parallel lifting.
\end{proof}

Let $J$ be an $n$-joint in $\R^n$, we denote by $|J|$ the subspace in $\R^n$ spanned by points in $J$. If points in $J$ are in general position, then $|J|$ is a hyperplane in $\R^n$.


\begin{lemma}{\label{lemma:lifting_into_general_positions}} There exists an axis-aligned polygon with a parallel lifting of $A_1, ...,$ $A_{2n-3}$ such that all subspaces $|\widetilde A_1|, |\widetilde A_3|, ..., |\widetilde A_{2n-3}|$ are hyperplanes in general position. 
\end{lemma}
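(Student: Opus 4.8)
\smallskip
\noindent\textbf{Proof plan.}
The plan is to read the statement as a non-emptiness assertion for a Zariski-open condition and to settle non-emptiness by producing one sufficiently generic configuration. I would first record the reformulation that for hyperplanes $H_1,\dots,H_{n-1}\subset\R^n$ with normal covectors $v_1,\dots,v_{n-1}$, being in general position --- every sub-family of $j\le n-1$ of them meeting transversely, i.e.\ in codimension $j$ --- is equivalent to $v_1,\dots,v_{n-1}$ being linearly independent: a dependent family contains a minimally dependent subfamily whose hyperplanes then fail to meet in the expected codimension, while conversely $j\le n$ hyperplanes with linearly independent normals always meet in an affine flat of codimension exactly $j$. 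Hence the lemma asks for an axis-aligned $P$ and a parallel lifting for which (a) each $\widetilde A_k$ genuinely spans a hyperplane $|\widetilde A_k|$ (its $n$ points are affinely independent) and (b) the $n-1$ normals of $|\widetilde A_1|,|\widetilde A_3|,\dots,|\widetilde A_{2n-3}|$ are linearly independent. Both (a) and (b) are the non-vanishing of finitely many minors, polynomial in the coordinates of $P$ and in the heights $h_1,\dots,h_n\in\R^{n-2}$ of the parallel lifting; so the good locus is Zariski open, and it suffices to exhibit one point of it.

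To exhibit one, I would use a convenient \emph{explicit} parallel lifting rather than a generic one. From $A_k(i)=P_{k+4(i-1)}$ one sees that the planar point-sequences underlying $A_1,A_5,A_9,\dots$ are cyclic shifts of a single sequence $X=(P_1,P_5,\dots)$ and those underlying $A_3,A_7,\dots$ are cyclic shifts of $Y=(P_3,P_7,\dots)$, all receiving the same heights $h_1,\dots,h_n$. Choosing $h_1=h_2=0$ and $h_i=e_{i-2}\in\R^{n-2}$ for $3\le i\le n$, a cofactor expansion of the $(n+1)\times(n+1)$ determinant cutting out $|\widetilde A_k|$ collapses: the two ``planar'' entries of the normal become, up to sign, the components of the edge vector $X_{2+a}-X_{1+a}$ of the relevant shifted sequence, while the remaining $n-2$ entries become the doubled signed areas of the triangles $X_{1+a},X_{2+a},X_{2+r+a}$, $r=1,\dots,n-2$ (and similarly with $Y$). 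Thus (a) reduces to $X_{1+a}\ne X_{2+a}$ (resp.\ for $Y$), which holds for any non-degenerate axis-aligned polygon, and (b) reduces to the statement that an explicit $(n-1)\times n$ matrix $M(P)$, whose entries are these edge components and triangle areas, has rank $n-1$.

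The remaining --- and genuinely substantive --- point is to show $\operatorname{rank}M(P)=n-1$ for at least one axis-aligned $2n$-gon; this is where I expect the real work to lie, since the entries of $M(P)$ are highly constrained (they come from a single zig-zag of shared coordinates, and many of the hyperplanes reuse the same planar vertices), so one must rule out a hidden linear identity among edge vectors and triangle areas that would force $\operatorname{rank}M(P)<n-1$. I would handle this by degenerating within the family of axis-aligned polygons, moving only its $n$ column-values and $n$ row-values and letting them grow along a fast scale, say the $j$-th column-value $\asymp t^{c_j}$ and $j$-th row-value $\asymp t^{d_j}$ with generic integer exponents and $t\to\infty$; each entry of $M(P)$ is then a Laurent polynomial in $t$ with a single leading monomial, and for a suitable choice of exponents the leading monomials along some transversal of $M(P)$ are pairwise distinct and dominate, making the leading-order matrix triangular with nonzero diagonal, so that transversal's determinant is a nonzero function of $t$ and $M(P)$ has full rank for suitable $P$. (For small $n$ one may instead simply exhibit one explicit axis-aligned $2n$-gon and check the rank by hand; the monomial degeneration is the uniform-in-$n$ version.) Once one good configuration is in hand, openness upgrades it to a Zariski-dense --- in particular non-empty --- set of axis-aligned polygons admitting a good parallel lifting, which is exactly the lemma.
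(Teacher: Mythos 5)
Your setup coincides with the paper's: the same reformulation of general position as linear independence of the normal covectors, the same observation that the good locus is Zariski open so that a single witness suffices, and in fact the very same explicit lifting (the paper's $L_0$ fixes the first two points of each $A_k$ and lifts the $j$-th point by $1$ in the $x_j$ direction, which is exactly your $h_1=h_2=0$, $h_i=e_{i-2}$), leading to the same description of the normals as an in-plane edge normal followed by $n-2$ doubled triangle areas. Up to that point the two arguments are identical.

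The gap is in the step you yourself flag as ``where the real work lies'': you never actually establish that $\operatorname{rank} M(P)=n-1$ for some axis-aligned $P$. The monomial-degeneration plan ($a_j\asymp t^{c_j}$, $b_j\asymp t^{d_j}$ with generic exponents) does not by itself do this: making each \emph{entry} have a single leading monomial is easy, but the determinant of an $(n-1)\times(n-1)$ submatrix is a signed sum over permutations, and you must exhibit a specific transversal whose leading $t$-degree strictly exceeds that of every other permutation (and of every other monomial choice within the entries). Whether such a transversal exists depends on the combinatorial structure of the triangle-area entries, which are highly correlated across rows because consecutive $A_k$'s share all but a shift of their planar points --- this is exactly the ``hidden linear identity'' you raise and then do not dispel. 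Equivalently, asserting that the determinant, viewed as a polynomial in the $a_j,b_j$, is not identically zero is the entire content of the lemma, and the degeneration only converts nonvanishing of that polynomial into nonvanishing of a function of $t$; it does not prove it. The paper closes this by an explicit computation: it sets $a_j=j-1$, replaces the pairs $v_{4i-3},v_{4i-1}$ by the differences $u_{4i-3}=v_{4i-1}-v_{4i-3}$ to expose an anti-diagonal block with entries $n(b_{j+1}-b_j)$, and then chooses $b_{l+1}=\beta$ large and the remaining consecutive differences $b_{i+1}-b_i=\epsilon$ small, so that the complementary block is an $\epsilon$-perturbation of an explicitly invertible matrix $M_\beta$. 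Some concrete verification of this kind (or a fully executed version of your tropical argument, with the transversal and exponents specified and the domination checked) is needed before the proof is complete.
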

\begin{proof} 

There is a geometric proof and an algebraic one. The geometric one is more intuitive, but for simplicity's sake, we present the algebraic proof.  Suppose $n = 2l +1$, the case where $n$ is even is analogous. 

We construct a lifting $L_0: (\R^2)^{n(n-1)} \rightarrow (\R^{n})^{n(n-1)}$ as follows: 

First we give coordinates to the points $P_k$ in $\R^2$. If we assume that $P_1 P_3$ is parallel to the $x$-axis, then the points $P_3, P_7, ..., P_ {4n-1}$ are determined (uniquely) by $P_1, P_5, ..., P_{4n-3}$. Let the coordinates of $P_1,  ..., P_{4n-3}$ in $\R^2$ be 
$$P_1 = (a_1, b_1), \quad P_5 = (a_2, b_2), \quad  ..., \quad P_{4n-3} = (a_n, b_n),$$
then the other points have coordinates
$$P_3= (a_2, b_1),\quad P_7 = (a_3, b_2), \quad  ..., \quad P_{4n-1} = (a_1, b_n).$$

For the lifting $L_0$, we make a specific choice so that for each $A_k$ ($k$ being odd), $\widetilde A_k(1) = A_k(1)$ and $\widetilde A_k(2) = A_k(2)$, while for $j \ge 3$, $A_k(j)$ is lifted to $\R^n$ so that the $x_j$ coordinates are $1$ and the $x_i$ coordinate is $0$ for all $i\ge 3$, $i \ne j$. in other words, for each $A_k$, we fix the first two points in $\R^2$, and lift the $j^{th}$ point up by $1$ in $x_j$ direction only. Without loss of generality, we may also assume $a_1 = b_1 = 0$, so we choose $P_1$ to be the origin in $\R^n$. 

Next we will find the normal vector $v_k$ to each hyperplane $|\widetilde A_k|$, and show that there exists a polygon $P$ such that under the lifting $L_0$, the vectors $v_1, ..., v_{2n-3}$ are linearly independent in $\R^n$. This shows that $|\widetilde A_1|, |\widetilde A_3|, ..., |\widetilde A_{2n-3}|$ are in general position. We calculate each $v_k$ explicitly, which is the vector spanning the orthogonal complement of the subspace formed by the $(n-1)$ vectors $$\widetilde A_k(2) -\widetilde A_k(1), \quad  \widetilde A_k(3) -\widetilde A_k(1),\quad  ...,\quad  \widetilde A_k(n) -\widetilde A_k(1).$$

Let $e_1, e_2, ..., e_{n}$ be the standard basis in $\R^n$, and let $e = (e_1, ..., e_n)$ regarded as a vector with entries being the basis, then it is clear that 
$$v_k = \det (e, \quad \widetilde A_k(2) -\widetilde A_k(1), \quad  ...,\quad  \widetilde A_k(n) -\widetilde A_k(1) )^T$$

As an example we calculate $v_1$ and $v_3$, where $a_1 = b_1 = 0$. \\

\begin{equation} 
\nonumber v_1 = \det \begin{bmatrix}
e_1 & e_2 & e_3 & \ldots & e_n \\ 
a_2 & b_2 & 0  & \ldots & 0  \\
a_3 & b_3 & 1  & \ldots & 0  \\
\vdots  & & &  \ddots \\
a_n & b_n & 0  & \ldots & 1  \\
\end{bmatrix}  \hspace*{0.5in}
\nonumber v_3 = \det \begin{bmatrix}
e_1 & e_2 & e_3 & \ldots & e_n \\ 
a_3 - a_2 & b_2 & 0  & \ldots & 0  \\
a_4 - a_2 & b_3 & 1  & \ldots & 0  \\
\vdots  & & &  \ddots \\
a_n - a_2  & b_n & 0  & \ldots & 1  \\
\end{bmatrix} \\
\end{equation} \\

For this particular lifting $L_0$, the determinants are easy to compute, since columns $3$ to $n$ of the matrix remain the same for each $k$, which is mostly $0$ except for a sub-diagonal of $1$ and the top row being part of the basis. Hence we have the $v_k$:

\begin{align*}
v_1 & = (b_2, -a_2, \det \begin{bmatrix} a_2 & b_2 \\ a_3 & b_3   \end{bmatrix} , \ldots, \det \begin{bmatrix} a_2 & b_2 \\ a_n & b_n   \end{bmatrix}  )   \\
v_3 & = (b_2, a_2 - a_3, \det \begin{bmatrix} a_3 - a_2 & b_2 \\ a_4 - a_2 & b_3   \end{bmatrix} , \ldots, \det \begin{bmatrix} a_3 - a_2 & b_2 \\ a_1 - a_2  & b_n   \end{bmatrix}  )  \\
& \ldots \\
v_{2n-3} & =  v_{4l-1} = (b_{l+1}-b_l, a_{l+1} - a_{l+2}, \det \begin{bmatrix} a_{l+2} - a_{l+1} & b_{l+1} - b_{l} \\ a_{l+3} - a_{l+1} & b_{l+2} - b_l  \end{bmatrix} ,  \ldots, \\  & \hspace*{2.02in} \det \begin{bmatrix} a_{l+2} - a_{l+1} & b_{l+1} - b_{l} \\ a_{l} - a_{l+1} & b_{l-1} - b_l  \end{bmatrix}) 
\end{align*}

Now we set $a_j = j -1$ for all $j = 1, ..., n$. (Note that this agrees with the fact that $a_1 =0$.) Under this assumption, we calculate that

\begin{align*}
          v_1 & = (b_2, \:\: -1, \:\:  b_3 - 2 b_2, \:\:  b_4 - 3 b_2, \:\:  ...,\:\:  b_{n-1} - (n-2)b_2, \:\:  b_n - (n-1)b_2 )   \\
          v_3 & = (b_2, \:\:  -1, \:\:  b_3 - 2 b_2, \:\:  b_4 - 3 b_2, \:\:  ..., \:\: b_{n-1} - (n-2)b_2,\:\:   b_n + b_2   ) \\ 
     \begin{split}
         v_5  & = (b_3 -b_2, \:\:  -1,  \:\:  (b_4 - b_2) - 2 (b_3 - b_2),   \:\:  (b_5 - b_2) - 3 (b_4 - b_2) ,  \:\:   ..., \\ & \hspace*{1.6in}   (b_n - b_2) - (n-2) (b_3 - b_2), \:\:   (b_1 - b_2) + (b_3 - b_2)   )   
     \end{split}  \\
     \begin{split}
         v_7  & = (b_3 -b_2, \:\:  -1,  \:\:  (b_4 - b_2) - 2 (b_3 - b_2),   \:\:  (b_5 - b_2) - 3 (b_4 - b_2) ,  \:\:   ..., \\ & \hspace*{1.95in}   (b_n - b_2) +2 (b_3 - b_2), \:\:   (b_1 - b_2) + (b_3 - b_2)   )   
     \end{split}  \\
     \begin{split}
         v_9  & = (b_4 -b_3, \:\:  -1,  \:\:  (b_5 - b_3) - 2 (b_4 - b_3),   \:\:  (b_6 - b_3) - 3 (b_4 - b_3) ,  \:\:   ..., \\ & \hspace*{1.42in}   (b_n - b_3) -(n-3) (b_4 - b_3), ... , \:\:   (b_2 - b_3) + (b_4 - b_3)   )   
     \end{split}  \\
     \begin{split}
         v_{11}  & = (b_4 -b_3, \:\:  -1,  \:\:  (b_5 - b_3) - 2 (b_4 - b_3),   \:\:  (b_6 - b_3) - 3 (b_4 - b_3) ,  \:\:   ..., \\ & \hspace*{1.78in}   (b_n - b_3)  + 3 (b_4 - b_3), ... , \:\:   (b_2 - b_3) + (b_4 - b_3)   )   
     \end{split}  \\
                & \ldots  \\
     \begin{split}
        v_{4l-3} & = (b_{l+1} - b_l, \:\:  -1, \:\:  (b_{l+2}-b_l)-2(b_{l+1} - b_l), \:\:   (b_{l+3}-b_l)-3(b_{l+1} - b_l), \:\:  ...,  \\ & \hspace*{0.85in}   (b_{n}-b_l)-(l+1)(b_{l+1} - b_l), \:\:   (b_1-b_l)+ (l-1) (b_{l+1} - b_l), \:\: ...)
     \end{split}  \\
     \begin{split}
        v_{4l-1} & = (b_{l+1} - b_l, \:\:  -1, \:\:  (b_{l+2}-b_l)-2(b_{l+1} - b_l), \:\:   (b_{l+3}-b_l)-3(b_{l+1} - b_l), \:\:  ...,  \\ & \hspace*{1.2in}   (b_{n}-b_l)+ l (b_{l+1} - b_l), \:\:   (b_1-b_l)+ (l-1) (b_{l+1} - b_l), \:\: ...)
     \end{split}  
\end{align*}

It suffices to show that $[v_1, v_3, ..., v_{4l-1}]^T$ has full rank $n-1$, or equivalently, we can show that  $[v_1, v_3, ..., v_{4l-1}, v_0]^T$ has non-zero determinant for some vector $v_0$. Let $u_{4i-3} = v_{4i-1} - v_{4i-3}$ for $i = 1, 2, ..., l$. Then it suffices to show that the matrix $$M =[u_1, u_5, ..., u_{4l-3}, v_3, v_7, ..., v_{4l-1}, v_0]^T$$ has non-zero determinant. Note that the matrix $[u_1, u_5, ..., u_{4l-3}]^T$ has the following form:
\begin{equation*}
\begin{bmatrix}
0 & 0 & \ldots &0 &   0 &  \ldots & 0 & 0 & n b_2\\
0 & 0 &  \ldots & 0 & 0 & \ldots &0 & n(b_3 - b_2) &   0\\
0 & 0 &  \ldots & 0 & 0 & \ldots & n(b_4 - b_3) & 0 &  0\\
\vdots & \vdots & & \vdots && \ldots  \\ 
0 & 0 &  \ldots &0 & n(b_{l+1} - b_l) & \ldots & 0 & 0 & 0 \\
\end{bmatrix}
\end{equation*}

If we write $M = \begin{bmatrix} M_1 & M_2 \\ M_3 & M_4 \end{bmatrix}$ where $M_1$ is $(l+1) \times l$ matrix of entries $0$, $M_2$ is the $l \times l$ diagonal matrix, formed by the right $l$ columns in the matrix above, and $M_2$ is $(l+1)\times (l+1)$ given by the first $(l+1)$ columns of the vectors $v_3, v_7, ..., v_0$. We know that $$\det M =  - \det M_2 \det M_3.$$

We write out $M_3$ explicitly with the choice of $v_0 = (0, 1, 0, ..., 0)$:
\begin{equation*} M_3 = 
\arraycolsep=2pt
\medmuskip = 0.3mu
\smaller
\begin{bmatrix}
b_2 & -1 & b_3 - 2 b_2 & \ldots & b_l - (l-1) b_2  &  b_{l+1} - l b_2 \\
b_3 - b_2 & -1 & (b_4 - b_2)-2 (b_3 - b_2) & \ldots &  (b_{l+1}-b_2)-(l-1)(b_3 - b_2) & (b_{l+2}-b_2)-l (b_3 - b_2)\\
\vdots \\
b_{l+1} - b_l & -1 & (b_{l+2}-b_l)-2 (b_{l+1}-b_l) & \ldots & & (b_{2l}-b_l)-l (b_{l+1}-b_l)\\
0 & 1 & 0 & \ldots & 0 & 0
\end{bmatrix}
\end{equation*}

Note that if we interchange the first two columns of the matrix, the appearance of the term $b_{l+1}$ is only on the off-diagonal and the second last row. Suppose that $b_{l+1} = \beta$ and all other $b_j = 0$, then the matrix (after interchanging the first two columns) becomes: 

\begin{equation*} M_\beta =
\begin{bmatrix}
-1 & 0 & 0 & \ldots & 0 & \beta\\
-1 & 0 & 0 & \ldots & \beta & 0 \\
\vdots  & & &\reflectbox{$\ddots$} & & \vdots\\
-1 & 0 & \beta   & \ldots & 0 & 0\\
-1 & \beta & -2 \beta  & \ldots & -(l-1) \beta & -l \beta \\ 
1 & 0  & 0& \ldots & 0 & 0 
\end{bmatrix}
\end{equation*}
which is easily seen to be invertible, since we can add the last row to each row above and then add $(l+1)-i$ copies of the $i^{th}$ row to the second last row for all $ 1 \le i \le l-1$, and then obtain a off-diagonal matrix. Now we can simply set $b_{i+1} - b_{i} = \epsilon$ for all $i \le l -1$ with sufficiently small $\epsilon > 0$, so $\det M_2 \ne 0$. Let $M_3'$ be $M_3$ with first two columns interchanged, then $M_3'$ can be obtained by adding a matrix with $\epsilon$-small entries to $M_\beta$. If we make $\beta$ sufficiently large, it is clear that $\det M_3 \ne 0.$ 

This proves the lemma.
\end{proof}

\begin{definition}
We call a parallel lifting of a polygon \emph{good} if it satisfies the requirements in Lemma \ref{lemma:lifting_into_general_positions}.  
\end{definition}

In particular, Lemma \ref{lemma:lifting_into_general_positions} tells us that, if $\mathscr L$ is the space of all parallel liftings of all polygons, then the subspace $\mathscr L' \subset \mathscr L$ of the good parallel liftings is a generic subset. This is because $\mathscr L'$ is a subset defined by the non-vanishing of certain algebraic equations, hence Zariski open, which is generic as long as it is non-empty.  

\begin{notation}
To ease notation for the rest of the section, we denote $$J_1 = \widetilde A_1, \quad J_3 = \widetilde A_3, \quad  ..., \quad J_{2n-3} = \widetilde A_{2n -3}$$ if the lifting is good. In which case, we also label the prisms by $$T_2 = J_1 J_3, \quad T_4 = J_3 J_5, \quad  ..., \quad T_{2n-4} = J_{2n-5} J_{2n-3}.$$ 
\end{notation}

Note that each $T_k$ consists of $n$ parallel lines $(t^k(1), t^k(3), ..., t^k(2n-1))$, where the order of the lines are inherited from the order of corresponding pairs $J_{k-1} J_{k+1}$, in other words,  $t^k(j) = \overline{ J_{k-1} (j) J_{k+1}(j)}$, where $J_{k-1}(j)$ is the $j^{th}$ element in $J_{k-1} = \widetilde A_{k-1}$. 

Now we define more terminologies. 

 \begin{definition}
Let $T= (t_1(1), t_1 (3), ..., t_1 (2n-1))$ be a prism where $t_1(2j -1)$ represents a line. Following Schwartz \cite{Schwartz2}, we define a structure called \emph{cyclic skeletons} inductively as follows: First set $\Sigma_1 T = T$. Define $t_k (j)$ to be the subspace of $\R^n$ spanned by subspaces  $t_{k-1}(j-1)$ and $t_{k-1}(j+1)$, and then set $\Sigma_k T = (..., t_k (j), ... )$. In other words, $t_k (j)$ is spanned by $k$ consecutive lines, and thus has dimension $k$. It is clear from definitions that $$t_{k-1}(j) = t_k (j-1) \cap t_k (j+1).$$ 
\end{definition}

The cyclic skeletons formalize the description of ``faces'' of the prisms. For a prism $T$, the cyclic $k$ skeleton $\Sigma_k T$ is a set of $n$ elements, where each element is a $k$-dimensional face of $T$ in $\R^n$.  As mentioned in the outline of the proof, we use the notion of the skeletons in the following crucial way: We produce a codimension $d$ flat by intersecting $d$ hyperplanes $|J_i|, |J_{i+2}|, ..., |J_{i+2d -2}|$ (thanks to Lemma \ref{lemma:lifting_into_general_positions}). Then  we intersect this $d$ flat with each one of the faces in the cyclic $d$ skeleton of one of the prisms, to produce $n$ points in $\R^n$ (this requires the notion of a \emph{perfect lifting}, which says precisely all such intersections are transverse). 
\vspace*{0.1cm}

Before we prove that all the intersections are indeed transverse, we first follow \cite{Schwartz2} and formalize the notion of intersecting the $d$ flat with cyclic skeletons, and then project these points to the plane. As predicted, it turns out that these projected points recover the orbits of the vertices under iterations of the pentagram map.

\begin{definition}
Let $W$ be a codimension $j$ flat and $T$ be a prism. We say that $W$ \emph{slices} $T$ if $W \cap \Sigma_j T$ consists of $n$ distinct points and $W \cap \Sigma_{j+1} T$ consists of  distinct lines (as long as $j < n-1$).   Suppose that $W$ slices $T$, we define $W_T = \pi (W \cap \Sigma_j T)$, which is the projection of the intersection points, thus $W_T$ consists $n$-point. 
\end{definition}

\begin{definition}
Let $W$ and $W'$ be two distinct flats, we say that the pair $(W, W')$ slices $T$ if $\dim (W \cap W') +1 = \dim (W) = \dim (W')$ and if $W, W', W \cap W'$ all slice $T$. 
\end{definition}

The next lemma says that one iteration of the pentagram map can be obtained by taking intersections of two joints with the prism and then projecting down. 

Before we state the lemma, we first define the notion of the mating process, again following \cite{Schwartz2}. 

\begin{definition}
Let $X = (x_1, x_3, ..., x_{2n-1})$ and $Y= (y_1, y_3, ..., y_{2n -1})$ be 2 $n$-points. We construct $Z = X * Y$ where $Z = (z_2, z_4, ..., z_{2n-2}, z_{2n})$ by defining $$z_j = \overline{x_{j-1} x_{j+1}} \cap \overline{y_{j-1} y_{j+1}}$$ 
Note that labelling-wise, $Z = (z_2, z_4, ..., z_{2n})$. Now let  $X_1 = (X_{1,1} , X_{1,3} , ..., X_{1,2m-1}) $ be a sequence of $n$-points, such that for each pair $X_{1, 2k-1}$ and $X_{1, 2k+1}$, their product $X_{2, 2k} = X_{1, 2k-1}* X_{1, 2k+1}$ is well defined,  then the sequence $X_1$ produce a new sequence $X_2 = (X_{2,2} , X_{2,4} , ..., X_{2,2m-2}) $, and we denote $X_1 \rightarrow X_2$ whenever the process is well defined. This progression could carry on to obtain: $$X_1 \rightarrow X_2 \rightarrow ... \rightarrow X_m$$ where $X_m$ is one set of $n$-points, provided that each step is well defined. Schwartz calls this process the mating process on a sequence of $n$-points. 
\end{definition}

In our setting, it is clear from definition that, if we take $X_1 = (A_1, A_3, A_5, ..., $ $A_{2n - 3})$, then the underlying set of $X_2$, in other words, by taking the union of sets in $X_2$ (not a multi-set), gives all the vertices of $T(P)$, the first iteration of the pentagram map. Similarly $X_i$ provides all the vertices of $T^{i-1}(P)$ for $i \le n - 2$. In the last step, $X_{n-1}$ gives precisely half of the points in $T^{n-2} (P)$. Our goal is to prove that, if $P$ satisfies conditions in Theorem \ref{T002}, then the points in $X_{n-1}$ are collinear, where the line that goes through them also goes through the center of mass $\mathscr C (P)$. 


\begin{lemma}[Schwartz, Lemma 3.2 in \cite{Schwartz2}]{\label{lemma:intersection_gives_mating_process}}  Suppose $(V, V')$ slices $T$, and the offspring $(V)_T * (V')_T$ is well defined, then $(V \cap V')_T = (V)_T * (V')_T$. 
\end{lemma}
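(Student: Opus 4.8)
\textbf{Proof plan for Lemma \ref{lemma:intersection_gives_mating_process}.}

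The plan is to unwind both sides of the asserted equality directly from the definitions and to identify the two resulting collections of points face-by-face. Write $V$ and $V'$ as codimension-$j$ flats with $\dim(V \cap V') + 1 = \dim V = \dim V'$, and set $W = V \cap V'$, a codimension-$(j+1)$ flat. Since $(V,V')$ slices $T$, all three of $V$, $V'$, $W$ slice $T$. By definition $(V)_T = \pi(V \cap \Sigma_j T)$ is the $n$-point $(x_1, x_3, \dots, x_{2n-1})$ with $x_{2i-1} = \pi(V \cap t_j(2i-1))$, where the $t_j(2i-1)$ are the $j$-dimensional faces of $T$; similarly $(V')_T$ has entries $x'_{2i-1} = \pi(V' \cap t_j(2i-1))$. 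The mating product $(V)_T * (V')_T$ then has, in position $2i$, the point $\overline{x_{2i-1}x_{2i+1}} \cap \overline{x'_{2i-1}x'_{2i+1}}$. On the other hand $(W)_T = \pi(W \cap \Sigma_{j+1} T)$ has in position $2i$ the point $\pi(W \cap t_{j+1}(2i))$, using the identity $t_{j+1}(2i-1)\cap t_{j+1}(2i+1) = t_j(2i)$ wait — rather, the $(j{+}1)$-skeleton indexes faces so that $t_{j+1}(2i)$ is spanned by $t_j(2i-1)$ and $t_j(2i+1)$.

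First I would fix one index $2i$ and work entirely inside the single face $F := t_{j+1}(2i)$, which is a $(j{+}1)$-dimensional affine subspace of $\R^n$ containing the two $j$-dimensional subfaces $F_- := t_j(2i-1)$ and $F_+ := t_j(2i+1)$. Because $V$ slices $T$, $V \cap F_-$ is a single point $p_-$ and $V' \cap F_-$ is a single point $p'_-$; likewise $V \cap F_+ = q_+$ and $V' \cap F_+ = q'_+$ are single points. The line $\overline{p_- q_+}$ lies in $V \cap F$, and since $\dim(V \cap F) = 1$ (this is exactly what "$V$ slices $T$" gives, via the $\Sigma_{j+1}$ clause) this line \emph{is} $V \cap F$; similarly $\overline{p'_- q'_+} = V' \cap F$. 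Intersecting, $(V \cap V') \cap F = W \cap F$ is the single point $\overline{p_- q_+} \cap \overline{p'_- q'_+}$. Now apply $\pi$: since $\pi$ is linear and restricts to a bijection on each face of the prism $T$ (the faces are graphs over $\R^2$ in the lifted coordinates), $\pi$ sends lines in $F$ to lines in $\R^2$ and commutes with the intersection operations above. Hence $\pi(W \cap F) = \overline{\pi(p_-)\pi(q_+)} \cap \overline{\pi(p'_-)\pi(q'_+)} = \overline{x_{2i-1}x_{2i+1}} \cap \overline{x'_{2i-1}x'_{2i+1}}$, which is precisely the $2i$-th entry of $(V)_T * (V')_T$. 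Running $i$ over all $n$ values gives $(W)_T = (V)_T * (V')_T$, which is the claim.

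The one point that needs care, and which I expect to be the main obstacle, is the dimension bookkeeping: verifying that $\overline{p_- q_+}$ really exhausts $V \cap F$ rather than being a proper subset, and dually that the four points $p_-, q_+, p'_-, q'_+$ are in sufficiently general position within $F$ that the two lines meet in exactly one point. Both facts are encoded in the hypothesis that $(V,V')$ slices $T$ — the "$\Sigma_{j+1} T$ consists of distinct lines" clause in the definition of slicing forces $V \cap F$ and $V' \cap F$ to be distinct lines in the $(j{+}1)$-plane $F$, and the dimension count $\dim(W) = \dim(V) - 1$ forces them to actually intersect rather than be skew — but writing this transparently requires being careful about whether "distinct lines in $F$" automatically meet (true only because $V, V'$ share the flat $W$ of the right codimension, so $V+V'$ has the dimension of an ambient hyperplane relative to which $F$ is low-codimensional). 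Since this is Lemma 3.2 of \cite{Schwartz2}, I would either cite Schwartz's argument directly or reproduce his transversality count; no genuinely new idea is needed beyond the observation that $\pi$ behaves well on prism faces, which is immediate from the parallel-lifting construction.
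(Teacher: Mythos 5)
The paper does not prove this lemma at all: it is imported verbatim as Lemma 3.2 of Schwartz's \emph{Desargues theorem, dynamics, and hyperplane arrangements}, so there is no in-paper argument to compare against. Your reconstruction is essentially the right one and matches how Schwartz's proof goes: fix a face $F=t_{j+1}(2i)$, use the $\Sigma_{j+1}$ clause of ``$V$ slices $T$'' to see that $V\cap F$ is the line through the two points $V\cap t_j(2i\pm 1)$, likewise for $V'$, intersect, and project. The dimension bookkeeping you flag as the delicate point is indeed where the content lives, and your account of it is correct.

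One step is misstated, though the conclusion survives: you justify commuting $\pi$ with the intersection by claiming $\pi$ ``restricts to a bijection on each face of the prism,'' the faces being ``graphs over $\R^2$.'' That is false once $j+1>2$: a $(j{+}1)$-dimensional face cannot inject into $\R^2$ under the coordinate projection. What you actually need is weaker and does not require injectivity: $\pi(V\cap F)$ is a line or a point containing the two distinct points $x_{2i-1},x_{2i+1}$, hence equals $\overline{x_{2i-1}x_{2i+1}}$; therefore $\pi(W\cap F)$ lies on both $\overline{x_{2i-1}x_{2i+1}}$ and $\overline{x'_{2i-1}x'_{2i+1}}$, and the hypothesis that the offspring $(V)_T*(V')_T$ is well defined says these two lines meet in a unique point, which forces $\pi(W\cap F)$ to be that point. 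Replace the bijectivity claim with this containment-plus-uniqueness argument and the proof is complete.
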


Let us denote $H_{1,k} = |J_k|$ for all appropriate choices of $k$, where $J_k$ is defined as above. We inductively define $$H_{g,k} = H_{g-1, k-1} \cap H_{g-1, k+1}; \quad g = 2, 3, ..., n-1, k \in [g, 2(n-1) -g].$$
Each $H_{g,k}$ is in fact intersection of $g$ hyperplanes. For example, if $g$ is odd, then $$H_{g,k} =  |J_{k-(g-1)}| \cap ... \cap  |J_{k-2}| \cap |J_k| \cap |J_{k+2}| \cap ... \cap |J_{k+(g-1)}|,$$ and a similar equation holds for $g$ even. 

Consider a good parallel lifting of a generic polygon $P$. Lemma \ref{lemma:lifting_into_general_positions} shows that all hyperplanes $|J_i|$ are in general position, so each $H_{g,k}$ has codimension $g$. 


\begin{lemma}\label{lemma:intersection_with_different_prisms} Let $g$ and $k$ be integers of the same parity, then  $$H_{g, k} \cap \Sigma_g T_{k-g}  = H_{g, k} \cap \Sigma_g T_{k-g+2}  = \ldots  = H_{g, k} \cap \Sigma_g T_{k+g}.$$
In particular, if both $g$ and $k$ are odd, then $$H_{g, k} \cap \Sigma_g T_{k-1}  = H_{g, k} \cap \Sigma_g T_{k+1};$$
if both $g, k$ are even, then 
$$H_{g, k} \cap \Sigma_g T_{k-2}  = H_{g, k} \cap \Sigma_g T_{k} =H_{g, k} \cap \Sigma_g T_{k+2}. $$ 
\end{lemma}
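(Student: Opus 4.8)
The plan is to prove the stated chain of equalities one link at a time: it suffices to show that for any two \emph{adjacent} prisms $T_j$ and $T_{j+2}$ whose indices both lie in $\{k-g,\,k-g+2,\,\dots,\,k+g\}$ one has $H_{g,k}\cap\Sigma_g T_j = H_{g,k}\cap\Sigma_g T_{j+2}$; composing the resulting $g$ equalities gives the lemma (the $n$-point interpretation of each side is immaterial for this). The pivotal observation is that $T_j=J_{j-1}J_{j+1}$ and $T_{j+2}=J_{j+1}J_{j+3}$ share the joint $J_{j+1}$, and that as $j$ ranges over $k-g,\,k-g+2,\dots,\,k+g-2$ the shared index $j+1$ ranges over $k-g+1,\,k-g+3,\dots,\,k+g-1$ — exactly the set of $g$ (odd) indices $i$ for which $|J_i|$ is one of the hyperplanes cutting out $H_{g,k}$. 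Hence $H_{g,k}\subseteq|J_{j+1}|$ for every shared joint occurring in the chain; this is the one place where the precise index bookkeeping must be checked, and it works with no slack to spare.

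Next I would compare the two skeletons face by face. By the recursive definition of cyclic skeletons, the $\ell$-th face of $\Sigma_g T_j$ is the affine span of the $g$ consecutive parallel prism lines $t^{j}(\ell-g+1), t^{j}(\ell-g+3),\dots,t^{j}(\ell+g-1)$, and each such line $t^{j}(m)=\overline{J_{j-1}(m)J_{j+1}(m)}$ passes through the point $J_{j+1}(m)\in|J_{j+1}|$. So this face contains the flat $F_\ell := \bigl\langle J_{j+1}(m)\ :\ m=\ell-g+1,\ell-g+3,\dots,\ell+g-1\bigr\rangle$, which for a generic (good, indeed perfect) lifting is a genuine $(g-1)$-flat contained in $|J_{j+1}|$, while the face itself is a genuine $g$-plane \emph{not} contained in $|J_{j+1}|$ (it also contains off-hyperplane points such as $J_{j-1}(m)$). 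A $g$-plane not lying inside a hyperplane meets it in a $(g-1)$-plane, so the $\ell$-th face of $\Sigma_g T_j$ meets $|J_{j+1}|$ in exactly $F_\ell$. The identical computation for $T_{j+2}$ — now using the lines $t^{j+2}(m)=\overline{J_{j+1}(m)J_{j+3}(m)}$, which again pass through $J_{j+1}(m)$ — shows the $\ell$-th face of $\Sigma_g T_{j+2}$ also meets $|J_{j+1}|$ in the \emph{same} flat $F_\ell$.

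Since $H_{g,k}\subseteq|J_{j+1}|$, intersecting with $H_{g,k}$ only sees what happens inside $|J_{j+1}|$, so for each $\ell$ the $\ell$-th faces $\sigma\in\Sigma_g T_j$ and $\sigma'\in\Sigma_g T_{j+2}$ satisfy
\[
H_{g,k}\cap\sigma \;=\; H_{g,k}\cap\bigl(|J_{j+1}|\cap\sigma\bigr) \;=\; H_{g,k}\cap F_\ell \;=\; H_{g,k}\cap\bigl(|J_{j+1}|\cap\sigma'\bigr) \;=\; H_{g,k}\cap\sigma' .
\]
Taking the union over $\ell$ gives $H_{g,k}\cap\Sigma_g T_j = H_{g,k}\cap\Sigma_g T_{j+2}$, and chaining over $j$ yields the lemma; the displayed parity specializations ($g,k$ both odd, or both even) are just the cases $g=1$ and $g=2$ of this. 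The only non-combinatorial input is the genericity invoked twice above — that $F_\ell$ is honestly $(g-1)$-dimensional and that each $g$-face is an honest $g$-plane escaping $|J_{j+1}|$ — which is precisely what a good/perfect lifting buys us; I expect the actual obstacle to be nothing more than carefully tracking the cyclic indexing modulo $2n$ and confirming the shared joint is always among the hyperplanes defining $H_{g,k}$, which is the content of the first paragraph.
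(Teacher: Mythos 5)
Your proof is correct and follows essentially the same route as the paper's: reduce to adjacent prisms $T_j$, $T_{j+2}$, observe that $H_{g,k}\subseteq |J_{j+1}|$ because the shared index $j+1$ runs exactly over the $g$ hyperplanes cutting out $H_{g,k}$, and use that the two skeletons meet $|J_{j+1}|$ in the same set --- a fact the paper asserts ``by construction'' and you verify face by face, which is a welcome bit of extra detail. One tiny quibble: the displayed ``in particular'' clauses are not the cases $g=1$ and $g=2$ but rather the restriction of the general chain to its middle two (resp.\ three) prisms for arbitrary $g$; this does not affect your argument.
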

\begin{proof}

We prove the lemma by pairwise comparison. Namely, we show that for all even integers $l$ such that $k-g \le l \le k+g - 2$ (where all indices are labeled cyclically), we have that $$H_{g, k} \cap \Sigma_g T_{l}  = H_{g, k} \cap \Sigma_g T_{l+2}.$$ It is clear that the claim implies the lemma.  First note that by construction the following equation $$\Sigma_{*} T_{l} \cap H_{1, l+1} = \Sigma_{*} T_{l+2} \cap H_{1, l+1} $$ holds for all even $l$. Now when we restrict $l$ to the interval above, in other words, $l+1 \in [k-g+1, k+g-1]$, then $H_{g, k} \in H_{1, l+1}$. This is clear from the definition of $H_{g, k}$, which is the intersection of $g$ hyperplanes $$H_{1, k-g+1} \cap H_{1, k-g+3} \cap ... \cap H_{1, k+g-1}.$$
Therefore, combining the two equations above, we have that  $$H_{g, k} \cap \Sigma_g T_{l} = H_{g, k} \cap H_{1, l+1} \cap \Sigma_g T_{l} =  H_{g, k} \cap H_{1, l+1} \cap \Sigma_g T_{l+2} = H_{g, k} \cap \Sigma_g T_{l+2}.$$
This proves the case where both $g$ and $k$ are odd, and a similar argument (which possible shifting of indices) shows the case where both $g, k$ are even.
\end{proof}

Now we formalize the definition of transverse intersections between a codimension $g$ flats $H_{g, k}$ with certain cyclic skeletons. In particular, we define the notion of a polyjoint being \emph{fully sliced}, namely, all the possible intersections we consider are transverse -- so they produce points instead of  higher dimensional flats or the empty set. 
 
\begin{definition}
Let $\Omega = (J_1, J_3, ..., J_{2n-3})$ defined as above, where $J_k = \widetilde A_k$ and the lifting is a good lifting. We say that $\Omega$ is \emph{fully-sliced} if for all $g \le n-1$, $H_{g,k}$ slices $T_h$ for all relevant $h$ such that $|h-k| \le 1$. From Lemma \ref{lemma:intersection_with_different_prisms} we know that when $g, k$ are both odd, $H_{g,k}$ slices $T_{k-1}$ if and only if it slices $T_{k+1}$, so the notion of fully slices is well defined. If $\Omega$ is fully sliced, we define 
$$X_{g,k} = \pi (H_{g, k} \cap \Sigma_g T_h)$$ where $h \in [k-1, k+1]$ and define $$X_g = (X_{g,g}, X_{g, g+2}, ..., X_{g, 2(n-1)-g}).$$
\end{definition}

By the construction it is clear that $X_1 = (A_1, A_3, ..., A_{2n-1}).$

\begin{lemma}[Schwartz, Lemma 3.3 in \cite{Schwartz2}]{\label{lemma:fully_sliced_mating_process}} Suppose $\Omega = (J_1, ..., J_{2n-3})$ is fully sliced, and the mating process is defined on all $X_i$, then $$X_1 \rightarrow X_2 \rightarrow ... \rightarrow X_{n-1},$$ in other words, each $X_{i+1}$ is obtained from $X_i$ by the mating process defined in the beginning of the section.
\end{lemma}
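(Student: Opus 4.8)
The plan is to prove Lemma \ref{lemma:fully_sliced_mating_process} by induction on $i$, showing that at each stage the codimension-$g$ flats $H_{g,k}$ and the prisms $T_h$ satisfy the hypotheses of Lemma \ref{lemma:intersection_gives_mating_process}, so that the offspring relation $X_g \to X_{g+1}$ is exactly the mating relation. The base case $X_1 = (A_1, A_3, \ldots, A_{2n-1})$ is immediate from the construction, since $H_{1,k} = |J_k|$ and $\Sigma_1 T_h = T_h$, so $X_{1,k} = \pi(H_{1,k} \cap \Sigma_1 T_h)$ recovers the original $n$-point $A_k$ after projection.

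For the inductive step, I would fix $g$ and a pair of adjacent indices $k-1, k+1$ in the range where $X_g$ is defined, and set $V = H_{g,k-1}$, $V' = H_{g,k+1}$, and $T = T_k$. By the definition of $H_{g+1,k}$ we have $V \cap V' = H_{g-1,k-2}\cap H_{g-1,k} \cap H_{g-1,k}\cap H_{g-1,k+2} = H_{g+1,k}$ (unwinding the recursion to an intersection of $g+1$ hyperplanes), and since the lifting is good, Lemma \ref{lemma:lifting_into_general_positions} guarantees $\dim(V\cap V') + 1 = \dim V = \dim V'$, i.e. the codimensions are $g+1$, $g$, $g$ respectively. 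Because $\Omega$ is fully sliced, $V$, $V'$, and $V\cap V'$ each slice $T_k$ (here I use Lemma \ref{lemma:intersection_with_different_prisms} to move between $T_{k-2}, T_k, T_{k+2}$ so that the slicing statements line up with the prism $T_k$ sitting between $J_{k-1}$ and $J_{k+1}$), so the pair $(V,V')$ slices $T_k$. Then Lemma \ref{lemma:intersection_gives_mating_process} gives $(V\cap V')_{T_k} = (V)_{T_k} * (V')_{T_k}$, which is precisely $X_{g+1,k} = X_{g,k-1} * X_{g,k+1}$. Ranging over all valid $k$ shows $X_{g+1}$ is obtained from $X_g$ by one mating step, and since fully-slicedness was assumed to hold for all $g \le n-1$ (and the mating process is assumed defined on all $X_i$), the chain $X_1 \to X_2 \to \cdots \to X_{n-1}$ follows.

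The main obstacle is bookkeeping rather than conceptual: one must be careful that the prism indices in the definition of ``fully sliced'' ($H_{g,k}$ slices $T_h$ for $|h-k|\le 1$) match up correctly with the prism $T_k$ used in $X_{g+1,k} = \pi(H_{g+1,k}\cap \Sigma_{g+1} T_h)$, since $h$ ranges over $[k-1,k+1]$ and the parity of $g$ determines which of these is available. Lemma \ref{lemma:intersection_with_different_prisms} is exactly what reconciles these choices, so the argument reduces to invoking it at the right moment. A second minor point is verifying that the range of $k$ for which $X_g$ is defined, namely $k \in [g, 2(n-1)-g]$, shrinks by one on each side at each step, which is consistent with the mating process turning a sequence of $n$-points of length $\ell$ into one of length $\ell - 1$; this matches the indexing in the definition of $X_g$ and needs only to be checked against Lemma \ref{lemma:intersection_with_different_prisms}. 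Since this lemma is attributed to Schwartz (Lemma 3.3 in \cite{Schwartz2}) and our $\Omega$ is built to satisfy the same hypotheses, the proof is essentially a transcription of Schwartz's argument into the present notation, with the fully-sliced hypothesis doing all the work that transversality did in \cite{Schwartz2}.
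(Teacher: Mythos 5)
The paper gives no proof of this lemma at all --- it is imported verbatim as Lemma 3.3 of \cite{Schwartz2} --- so there is no internal argument to compare against. Your reconstruction (base case $X_1 = (A_1,\ldots)$; inductive step with $V = H_{g,k-1}$, $V' = H_{g,k+1}$, $V\cap V' = H_{g+1,k}$ of the right codimensions by goodness of the lifting, fed into Lemma \ref{lemma:intersection_gives_mating_process}, with Lemma \ref{lemma:intersection_with_different_prisms} reconciling prism indices) is exactly the intended argument and is sound. The one place where the reconciliation lemma is genuinely needed is the even-$g$ step: there $k$ is odd, so $T_k$ is not among the prisms $T_2, T_4, \ldots, T_{2n-4}$ and one must work with $T_{k-1}$ or $T_{k+1}$ and transfer the slicing statements between them; you flag this correctly, and the rest is the bookkeeping you describe.
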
 

\begin{definition}
We say that a good lifting is \emph{perfect} if the polyjoint $\Omega = (J_1, J_3, ..., J_{2n-3})$ is fully sliced. 
\end{definition}
 
 
\begin{lemma}{\label{lemma:perfect_lifting_existence}} There subspace of all axis-aligned polygons such that there exists a perfect parallel lifting is a generic subset of the space of all axis-aligned polygons.
\end{lemma}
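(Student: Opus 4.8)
The plan is to show that the conditions defining a perfect parallel lifting cut out a Zariski-open, nonempty subset of the space of axis-aligned polygons, hence a generic one. The proof has two parts: openness and nonemptiness. For openness, note that the space of axis-aligned polygons is parametrized (up to the normalizations already fixed in the proof of Lemma \ref{lemma:lifting_into_general_positions}) by the coordinates $a_1,\dots,a_n,b_1,\dots,b_n$, and once these are fixed, the good parallel lifting $L_0$ is determined, so the lifted points $\widetilde A_k(j)$, the hyperplanes $|J_k|$ and their normals $v_k$, the flats $H_{g,k}$, the cyclic-skeleton faces in each $\Sigma_g T_h$, and finally the intersection points $H_{g,k}\cap\Sigma_g T_h$ are all \emph{rational} functions of the $a_i,b_i$ (with denominators that are themselves the relevant non-degeneracy determinants). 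The statement ``$H_{g,k}$ slices $T_h$'' asserts that finitely many such intersections are transverse, i.e.\ that a collection of determinants (the ones measuring general position of the relevant linear spans) are all nonzero; this is an open condition. Since there are only finitely many pairs $(g,k,h)$ with $g\le n-1$ and $|h-k|\le 1$, the locus of polygons admitting a perfect parallel lifting is a finite intersection of Zariski-open sets, hence Zariski-open.

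For nonemptiness, I would argue by continuation rather than by exhibiting a closed-form example. Recall from Lemma \ref{lemma:lifting_into_general_positions} that there \emph{is} an axis-aligned polygon $P_0$ (the one constructed there, with $a_j=j-1$, most $b_j$ equal, one large $b_{l+1}=\beta$) whose good parallel lifting makes all of $|J_1|,\dots,|J_{2n-3}|$ hyperplanes in general position; this already guarantees $\dim H_{g,k}=n-g$ for every $g,k$. What remains is to see that for a suitable choice, each $H_{g,k}$ also meets every relevant cyclic skeleton $\Sigma_g T_h$ transversely. Here the key observation is the one recorded just before the definition of ``slices'': the projected intersection points, when they are transverse, are exactly the vertices produced by the mating process $X_1\to X_2\to\cdots$, i.e.\ by iterating the pentagram map on $P$ (Lemmas \ref{lemma:intersection_gives_mating_process} and \ref{lemma:fully_sliced_mating_process}). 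So ``$H_{g,k}$ slices $T_h$ for all $g,k,h$'' is implied by the condition that the pentagram iterates $T(P),T^2(P),\dots,T^{n-2}(P)$ are all defined and non-degenerate (no two consecutive $2$-diagonals parallel, no vertices at infinity, etc.) \emph{together with} the general-position condition on the hyperplanes from Lemma \ref{lemma:lifting_into_general_positions}. The first of these holds on the generic subset $S'$ of Theorem \ref{T001} (indeed the pentagram map is generically defined on polygons), and the second is nonempty by Lemma \ref{lemma:lifting_into_general_positions}; since each is Zariski-open and we have exhibited points in each, and the space of axis-aligned polygons is irreducible, their intersection is nonempty. Combined with the openness from the first part, this shows the perfect-lifting locus is a nonempty Zariski-open set, hence generic.

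The main obstacle, and the step that needs genuine care rather than bookkeeping, is making precise the equivalence ``$\Omega$ fully sliced $\iff$ the first $n-2$ pentagram iterates of $P$ are non-degenerate.'' One direction is essentially Lemma \ref{lemma:fully_sliced_mating_process}: if $\Omega$ is fully sliced then the $X_g$ are obtained by the mating process, so in particular the mating process — hence the relevant pentagram iterates — is defined. For the converse one must check that non-degeneracy downstairs in $\R^2$, plus general position of the hyperplanes upstairs, forces the intersections $H_{g,k}\cap\Sigma_g T_h$ to be $0$-dimensional and to consist of $n$ distinct points (and that $H_{g,k}\cap\Sigma_{g+1}T_h$ consists of distinct lines); this is an induction on $g$ using $t_{g-1}(j)=t_g(j-1)\cap t_g(j+1)$ and the fact that $\pi$ restricted to each face of the skeleton is, generically, injective, so that collapses or coincidences upstairs would project to collapses or coincidences downstairs, contradicting non-degeneracy of the iterate. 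A clean way to sidestep the full converse is simply to verify nonemptiness directly: take the polygon $P_0$ from Lemma \ref{lemma:lifting_into_general_positions}, possibly after a further generic perturbation within the (open, nonempty) general-position locus, and invoke Theorem \ref{T001}'s genericity statement to also put it in $S'$; then Lemma \ref{lemma:fully_sliced_mating_process}'s hypotheses are met and one checks the skeleton intersections are transverse dimension-by-dimension. Either way, the crux is the transversality induction; everything else is the routine ``finite intersection of nonempty Zariski-opens in an irreducible variety is a nonempty Zariski-open'' argument.
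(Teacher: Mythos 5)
Your overall framing (a finite conjunction of Zariski-open conditions, plus a nonemptiness witness, on an irreducible parameter space) is sound and matches the philosophy the paper states after the definition of a \emph{good} lifting. But the paper's own proof of this lemma is simply a citation: it invokes the proof of Lemma 3.4 of \cite{Schwartz2}, where Schwartz establishes by a genericity/perturbation argument that a generic polyjoint is fully sliced; the paper's implicit claim is that this argument survives the extra constraint of the lifting being parallel, just as Lemma \ref{lemma:lifting_into_general_positions} had to be reproved separately for the same reason. So your route is genuinely different, and the place where it is incomplete is exactly the step you flag as the crux.

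The concrete gap is your asserted implication ``pentagram iterates of $P$ non-degenerate in $\R^2$, plus the hyperplanes $|J_i|$ in general position, implies $\Omega$ is fully sliced.'' The only tool the paper provides for relating the upstairs intersections to the downstairs mating process is Lemma \ref{lemma:intersection_gives_mating_process}, and that lemma takes slicing as a \emph{hypothesis}: indeed the definition of ``$(V,V')$ slices $T$'' already includes the requirement that $V\cap V'$ slices $T$, so the lemma cannot be used to propagate transversality from stage $g-1$ to stage $g$ --- the very thing your induction needs is assumed, not concluded. Your heuristic that ``collapses upstairs project to collapses downstairs'' addresses the failure mode where $H_{g,k}\cap t_g(j)$ is positive-dimensional or points coincide, but before Lemma \ref{lemma:intersection_gives_mating_process} is available there is no established correspondence between those upstairs intersections and the pentagram iterates at all, and the failure mode where $H_{g,k}\cap t_g(j)=\emptyset$ (the flat and the face are affinely parallel, or the face is too small in the relevant direction) is not excluded by any downstairs non-degeneracy you have quoted. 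To close the argument you would either have to prove this transversality induction honestly (essentially redoing Schwartz's Lemma 3.4 in the constrained parallel-lifting setting), or do what the paper does and check that Schwartz's proof only uses properties of the polyjoint that parallel liftings of axis-aligned polygons still enjoy. As written, the nonemptiness half of your proof rests on an unproved claim.
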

\begin{proof}
This follows from the proof of lemma 3.4 in \cite{Schwartz2}. 
\end{proof}

As we remarked previously, the mating process is a generalization of the pentagram map in the sense that iterations of the pentagram map on the polygon $P$ can be obtained through the mating process of $(A_1, A_3, ..., A_{2n-3})$:

\begin{lemma}\label{lemma:mating_gives_pentagram_map}
Let $Y_1 = (A_1, A_3, A_5, ..., A_{2n - 3})$ where each $A_i$ is defined as in the beginning of this section, then the mating process $$Y_1 \rightarrow Y_2 \rightarrow ... \rightarrow Y_{d+1}$$ is well defined for $d$ steps if and only if the $T^{d}$ is well defined on $P$. In addition, assume that  mating process $$Y_1 \rightarrow Y_2 \rightarrow ... \rightarrow Y_{n-1}$$ is well defined,  then the underlying set of $Y_i$ gives all the vertices of $T^{i-1}(P)$ for $i \le n - 2$. In the last step, $Y_{n-1}$ gives precisely half of the points in $T^{n-2} (P)$. 
\end{lemma}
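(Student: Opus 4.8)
The statement is essentially a bookkeeping translation between the combinatorics of the pentagram map on the axis-aligned polygon $P$ and the combinatorics of the mating process applied to the sequence $Y_1 = (A_1, A_3, \ldots, A_{2n-3})$. The plan is to proceed by induction on $d$, showing at each step that the offspring $Y_{i+1} = (Y_{i,2}, Y_{i,4}, \ldots)$ produced by the mating operation $*$ has underlying set exactly the vertex set of $T^{i-1}(P)$, with the product $X * Y$ for consecutive $n$-points corresponding precisely to the formation of the $2$-diagonals and their successive intersections. The base case is the definitional observation (already noted in the excerpt) that the underlying set of $Y_1$ is $\{P_1, P_3, \ldots, P_{4n-1}\}$, the vertex set of $P = T^0(P)$.

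\textbf{Key steps.} First I would unwind the definition of $*$: for $n$-points $X = (x_1, x_3, \ldots, x_{2n-1})$ and $Y = (y_1, y_3, \ldots, y_{2n-1})$, the point $z_j = \overline{x_{j-1}x_{j+1}} \cap \overline{y_{j-1}y_{j+1}}$ is exactly the intersection of two $2$-diagonals. So I must check that when $X = A_{2k-1}$ and $Y = A_{2k+1}$ are two consecutive members of $Y_1$, the lines $\overline{x_{j-1}x_{j+1}}$ and $\overline{y_{j-1}y_{j+1}}$ appearing in the product are precisely the diagonals $P_{i-1}P_{i+3}$ and $P_{i-3}P_{i+1}$ whose intersection Schwartz labels $Q_i$ — this is a matter of matching the index shift $A_{2k-1} = \{P_{2k-1}, P_{2k+3}, \ldots\}$ against the labelling convention for $Q_i$ set up in the introduction. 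Once this is verified for one step, the cyclic/shift structure of the $A_j$'s (as sets $A_1 = A_5 = \cdots$, as sequences all distinct, each shifted by $4$) makes the inductive step formal: $Y_i$ is a sequence of $n$-points whose consecutive products reproduce the $2$-diagonal construction one more time, so the union of the sets in $Y_{i+1}$ is the vertex set of $T^i(P)$. For the ``well-defined iff'' part, I would observe that $T^d(P)$ is well-defined exactly when all the relevant diagonals are in general position (no two successive $2$-diagonals parallel, etc.), which is verbatim the condition that each mating product $Y_{i,2k-1} * Y_{i,2k+1}$ is well-defined for $i \le d$; so the two conditions are literally the same genericity conditions, proved by the same transversality count. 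Finally, for the last step: since $Y_1$ has $n-1$ entries and the mating process shortens the sequence by one each time, $Y_{n-1}$ has a single entry, an $n$-point; counting shows this $n$-point consists of $n$ of the $2n$ vertices of $T^{n-2}(P)$ — precisely "half" of them, the half carrying the parity inherited from the starting configuration — which is the last assertion.

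\textbf{Main obstacle.} The only real subtlety is getting the index arithmetic exactly right: matching the two labelling conventions (Schwartz's $P_i/Q_i$ convention from the introduction versus the $A_j(l)$ indexing introduced in this section, where $A_j(l) = P_{j+4(l-1)}(j)$) and tracking how the indices shift under each application of $*$, so that the claim "underlying set of $Y_i$ = vertex set of $T^{i-1}(P)$" holds on the nose rather than up to some unexamined reindexing. Once the one-step correspondence is pinned down, everything else is a routine induction, and the "well-defined iff" equivalence and the half-count in the last step follow with no extra work — these are exactly the statements Schwartz records (Lemmas 3.2 and 3.3, cited above as Lemmas \ref{lemma:intersection_gives_mating_process} and \ref{lemma:fully_sliced_mating_process}) transported into our notation, so I would lean on those rather than reprove transversality from scratch.
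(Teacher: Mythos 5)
Your proposal is correct and matches the paper's proof, which likewise treats the lemma as a direct unwinding of definitions plus careful index bookkeeping (identifying $z_j = \overline{x_{j-1}x_{j+1}} \cap \overline{y_{j-1}y_{j+1}}$ with the diagonal intersections $Q_i$, inducting on the number of mating steps, and observing that the single $n$-point $Y_{n-1}$ picks out the evenly spaced, same-parity half of the vertices of $T^{n-2}(P)$). One small caveat: your closing suggestion to lean on Lemmas \ref{lemma:intersection_gives_mating_process} and \ref{lemma:fully_sliced_mating_process} is unnecessary and slightly off target here, since those concern the lifted slicing picture in $\R^n$, whereas this lemma is a purely planar statement already settled by your index-matching induction.
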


\begin{proof}
This follows directly from the definition of the mating process and the construction of the sequence of $n$-points $(A_1, A_3, ..., A_{2n-3})$. Note that, if $T^{n-2}$ s well defined on $P$, and let $Q = T^{n-2}(P)$, then the $n$ points in the unique sequence of $n$-points in $Y_{n-1}$ are evenly spaces in $Q$, in other words, they are either the $1^{st}, 3^{rd} ,... $ points or the $2^{nd}, 4^{th}, ...$ points in $Q$, depending on the parity of $n$. This follows from carefully keeping track of relevant indices.
\end{proof}

We are ready to prove the main theorem in this section. 
\begin{proof}[Proof of Theorem \ref{T002}]

First we prove that if $P$ is polygon such that $T^{n-1}$ is well defined on $P$ and $P$ has a perfect lifting, then the point of collapse $T^{n-1}(P)$ is its center of mass $\mathscr C (P)$.

Let $A_1, A_3, ..., A_{2n-3}$ be as above, and $\widetilde A_1, \widetilde A_3, ..., \widetilde A_{2n-3}$ be lifted through a perfect lifting from $A_1, A_3, ..., A_{2n-3}$.  Let $Y_1 = (A_1, A_3, ..., A_{2n-3})$. Since $T^{n-1}$ is well defined on $P$, by Lemma \ref{lemma:mating_gives_pentagram_map}, we know that the mating process is well defined on $Y_1$ for $n-1$ steps, so we have 
$$Y_1 \rightarrow Y_2 \rightarrow ... \rightarrow Y_{n-1}.$$

Define $H_{1,1}, H_{1,3}, ..., H_{1,{2n-3}}$ as in Lemma \ref{lemma:intersection_with_different_prisms}, and define $X_1, X_2, ..., X_{n-1}$ as above Lemma \ref{lemma:fully_sliced_mating_process}. Since the lifting is assumed to perfect, that is to say, $H_{1,1}, ..., $ $H_{1,2n-3}$ are in general position and the joint  $\widetilde A_1, \widetilde A_3, ..., \widetilde A_{2n-3}$  is fully sliced, by Lemma \ref{lemma:fully_sliced_mating_process}, we know that $$X_1 =  Y_1 = (A_1, A_3, ..., A_{2n-3})$$ and  $$X_1 \rightarrow X_2 \rightarrow ... \rightarrow X_{n-1}.$$ 
Therefore, $Y_i = X_i \text{ for all } i \le n-1.$ Hence $$Y_{n-1} = X_{n-1} = (X_{n-1, n-1}) = \pi (H_{n-1, n-1} \cap \Sigma_{n-1} T_h),$$ which consists of $n$ points. Note that $H_{n-1, n-1}$ is a line since it is the intersection of $(n-1)$ hyperplanes in general position. Hence the $n$ points $\widetilde X_{n-1} = H_{n-1, n-1} \cap \Sigma_{n-1} T_h $ in space are collinear, and all lie on the line $L_{n-1} = H_{n-1, n-1}$. Let $l_{n-1} = \pi (L_i)$ be the projection into $\R^2$, and it is clear that $l_{n-1}$ goes through all points in $X_{n-1} = \pi(\widetilde X_{n-1})$. Since $X_{n-1} = Y_{n-1}$, we know that $l_{n-1}$ goes through $Y_{n-1},$ in other words, half of the points (the indices of which are of the same parity) in $T^{n-2}(P).$. 

We want to show that the line $l_{n-1}$ goes through $\mathscr C (P)$.  Let $ C_i $ be the centroid of the $n$-vertices in $J_i = \widetilde A_i$ as in Lemma \ref{lemma:parallel_lifting_of_centroids}, so $$C_1 = C_3 = ... = C_{2n-3}.$$ Denote this point in $\R^n$ by $C$. It is clear that the centroid $C_i  \in |J_i| = H_{1, i}$, so the point $C \in H_{1, j}$ for all $j = 1, 3, ..., 2n-3$. By the definition of $H_{n-1, n-1}$, it is clear that $C \in H_{n-1, n-1} = L_{n-1}$. Again by Lemma \ref{lemma:parallel_lifting_of_centroids} $$\mathscr C (P) = \pi (C) \in \pi (L_{n-1}) =  l_{n-1}.$$  
This implies that $T^{n-1}(P) = \mathscr C (P)$, since the line $l_{n-1}'$ that goes through the other $n$ points in $T^{n-2}(P)$ also goes through $\mathscr C (P)$ by symmetry. Since $T$ is well defined on $T^{n-2}(P)$ by assumption, we know that $l_{n-1}$ and $l_{n-1}'$ do not degenerate. Hence they have to intersect, at their common point $\mathscr C (P)$. The intersection $l_{n-1} \cap l_{n-1}'$ is the degenerate point $T^{n-1}(P)$ by the definition of the pentagram map, therefore the point $T^{n-1}(P)$ coincide with the center of mass.

Let $S$ be the space of all axis-aligned polygons. Theorem \ref{T001} says that the subspace of axis-aligned polygons such that $T^{n-1}$ is well defined on $P$ is a generic subspace of $S$.  Lemma \ref{lemma:lifting_into_general_positions} and Lemma \ref{lemma:perfect_lifting_existence} guarantee that the subspace of axis-aligned polygons that have a perfect lifting is again a generic subspace of $S$.  So generically, for axis-aligned $2n$-gon $P$, we have $$T^{n-1} (P) = \mathscr C (P).$$

\end{proof}


\section{{Higher pentagram map}}\label{sec:higherpentagram} 

In this section we generalize Theorem \ref{T002} to a certain family of higher pentagram maps, defined in \cite{GSTV} by Gekhtman, Shapiro, Tabachnikov and Vainshtein. We will first present the definition of the \emph{corrugated polygons} in $\P^m$ and the \emph{corrugated pentagram map} $T_m$, as a generalization of the pentagram map into $\P^m$ for $m \ge 3$. As in Section \ref{sec:pentagram}, we restrict our attention to axis-aligned polygons in $\R^m \subset \P^m$. The main objective in this section is to prove Theorem \ref{T003}, which states that a generic axis-aligned $mn$-gon $P \subset \R^m$ collapses to a point under the $n-1$ iterations of $T_m$, which equals its center of mass.

\subsection{The corrugated polygons and higher pentagram map}

\begin{definition}[GSTV $'11$ \cite{GSTV}]
Let $V = V_1V_2  ...  V_{k} $ be a polygon in $\P^m$, we say that $V$ is corrugated if, for every $i$, the vertices $V_i, V_{i+1}, V_{i+m}$ and $V_{i+m+1}$  span a projective plane. 
\end{definition} 

On the space of corrugated polygons, the successive pairs of  $m$-diagonals intersect by definition (which is not true for general polygons), so we have the following natural generalization of the pentagram map on the space of corrugated polygons:

\begin{definition}[GSTV $'11$ \cite{GSTV}]
Let $V \subset \P^m$ be a corrugated polygon, the (corrugated) higher pentagram map $T_m$ on $V$ is defined by taking the intersections of successive pairs of $m$-diagonals as vertices of $T_m(V)$, labelled according to the cyclic order on the vertices of $V$. 
\end{definition}

\begin{lemma}[GSTV $'11$, Theorem $5.2 (i)$ \cite{GSTV}] \label{lemma:GSTV_corrugated}
Let $V \subset \P^m$ be a corrugated polygon, then $T_m (V)$ is also corrugated.  
\end{lemma}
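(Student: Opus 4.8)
\textbf{Proof proposal for Lemma \ref{lemma:GSTV_corrugated}.}

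The plan is to work with a lift of the corrugated polygon to $\R^{m+1}$ and to exploit the defining span condition linearly. Let $V = V_1 V_2 \cdots V_k \subset \P^m$, and choose vectors $\widehat V_i \in \R^{m+1}$ projecting to $V_i$. The corrugation hypothesis says that for every $i$ the four points $V_i, V_{i+1}, V_{i+m}, V_{i+m+1}$ span a projective plane, i.e. the four vectors $\widehat V_i, \widehat V_{i+1}, \widehat V_{i+m}, \widehat V_{i+m+1}$ span a $3$-dimensional subspace of $\R^{m+1}$; equivalently there is a linear relation $\widehat V_{i+m+1} = a_i \widehat V_i + b_i \widehat V_{i+1} + c_i \widehat V_{i+m}$. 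First I would normalize the lift (rescale the $\widehat V_i$) so that these relations take a convenient form — the standard trick for the pentagram map is to arrange $\widehat V_{i+m+1} = \widehat V_{i+m} - x_i \widehat V_i + y_i \widehat V_{i+1}$ or similar, absorbing two of the three coefficients. Then the $m$-diagonal through $V_i$ and $V_{i+m}$ lifts to the plane $\mathrm{span}(\widehat V_i, \widehat V_{i+m})$, the vertex $W_i = T_m(V)_i$ of the image is the intersection of consecutive diagonals, so $\widehat W_i$ is (up to scale) the unique vector in $\mathrm{span}(\widehat V_{i-1}, \widehat V_{i+m-1}) \cap \mathrm{span}(\widehat V_i, \widehat V_{i+m})$, and I would solve for it explicitly as a linear combination of three or four consecutive $\widehat V$'s using the corrugation relation.

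Once I have an explicit formula $\widehat W_i = \alpha_i \widehat V_{i} + \beta_i \widehat V_{i+1} + \cdots$ (a combination of boundedly many consecutive vertex-lifts, with indices in a window of length roughly $m$), the remaining task is purely bookkeeping: I must show $\widehat W_i, \widehat W_{i+1}, \widehat W_{i+m}, \widehat W_{i+m+1}$ lie in a $3$-dimensional subspace of $\R^{m+1}$. The key structural point is that $\widehat W_i$ and $\widehat W_{i+1}$ together involve $\widehat V_j$ for $j$ in a short initial window, while $\widehat W_{i+m}$ and $\widehat W_{i+m+1}$ involve $\widehat V_j$ for $j$ in a short window around $i+m$, and the corrugation relations let me rewrite the ``far'' vertices $\widehat V_{i+m+\ast}$ in terms of $\widehat V_{i+m}, \widehat V_{i+m+1}$ and lower-indexed ones — so all four $\widehat W$'s end up expressible inside a common low-dimensional span. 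Concretely I expect to produce an explicit linear relation among $\widehat W_i, \widehat W_{i+1}, \widehat W_{i+m}, \widehat W_{i+m+1}$ with coefficients that are rational functions of the $x_i, y_i$, thereby exhibiting the needed coplanarity; alternatively I can argue dimension-count directly, showing the span of the four vectors is at most $3$.

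The main obstacle is the index arithmetic: getting the normalization of the lift right so that the corrugation relation has a clean form, and then tracking which $\widehat V_j$ appear in each $\widehat W_i$ carefully enough that the ``short window'' collapse actually works for general $m$ (rather than only small cases). There is a genuine risk of off-by-one errors in the ranges $i, i+1, i+m, i+m+1$ versus the support windows of the $\widehat W$'s, and one must check the computation is uniform in $m$ and respects the cyclic (mod $k$) indexing. A cleaner alternative I would try in parallel is the continued-fraction / "diagonal hyperplane" description: interpret $T_m$ projectively dual to an operation on the hyperplanes spanned by consecutive diagonals and check corrugation is preserved on that side, where the span condition may be more transparent; but I expect the direct lifted-vector computation above to be the most self-contained route, and I would present that, relegating the index verification to a short explicit display.
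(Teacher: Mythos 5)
This lemma is not proved in the paper at all: it is imported verbatim from Gekhtman--Shapiro--Tabachnikov--Vainshtein (Theorem 5.2(i) of \cite{GSTV}), so there is no in-paper argument to compare against. Judged on its own terms, your proposal is a plan rather than a proof: you never actually derive the formula for $\widehat W_i$, never exhibit the linear relation among $\widehat W_i, \widehat W_{i+1}, \widehat W_{i+m}, \widehat W_{i+m+1}$, and you yourself flag that the index bookkeeping might not close up. The strategy (lift to $\R^{m+1}$, normalize the corrugation relation, expand each $\widehat W$ in a short window of $\widehat V$'s, and collapse the far window using the corrugation relations) is sound and would succeed if executed, but as written the decisive step is deferred.

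More importantly, the lift-and-compute machinery is unnecessary here, and you are missing the one observation that makes the statement immediate. With $W_i = \overline{V_i V_{i+m}} \cap \overline{V_{i+1} V_{i+m+1}}$, the two consecutive image vertices $W_i$ and $W_{i+1}$ both lie on the $m$-diagonal $\overline{V_{i+1} V_{i+m+1}}$, so (generically) $\overline{W_i W_{i+1}} = \overline{V_{i+1} V_{i+m+1}}$; likewise $\overline{W_{i+m} W_{i+m+1}} = \overline{V_{i+m+1} V_{i+2m+1}}$. These two lines share the vertex $V_{i+m+1}$, hence they meet and together span a projective plane containing all four points $W_i, W_{i+1}, W_{i+m}, W_{i+m+1}$. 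That is exactly the corrugation condition for $T_m(V)$, with no coordinates, no normalization, and no index windows to track. This synthetic argument is essentially the one in \cite{GSTV}; I would recommend replacing the computational plan with it, or at minimum carrying the computation through to an explicit relation before claiming the result.
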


The lemma says that we can in fact define iterations of corrugated pentagram maps on corrugated polygons, as long as resulted polygons do not have degenerate vertices.

\begin{remark} 
There are other generalizations of the pentagram map into higher dimensional projective spaces. For example, Khesin and Soloviev defined the notion of the dented pentagram map using intersections of certain hyperplanes in \cite{KS}. For the generalization of our result Theorem \ref{T003}, the most natural choice of higher pentagram map is the one by taking higher diagonals, namely  $T_m$. We call it the \emph{corrugated pentagram map} to distinguish from the dented map, since both authors originally named their maps ``the higher pentagram map''.  
\end{remark}

Now we define the corresponding notion of axis-aligned polygon. 

\begin{definition}
A closed polygon $V = V_1 V_2 ... V_k$ is axis aligned in $\R^m$ if the edge $V_iV_{i+1}$ is parallel to the $x_j$ axis where $j = i \mod m$. In other words, we have $V_1V_2$ in the direction of $x_1$ axis, $V_2V_3$ in the direction of $x_2$ axis, and so forth.
\end{definition}

\begin{figure}
\begin{center}
\includegraphics[scale=0.55]{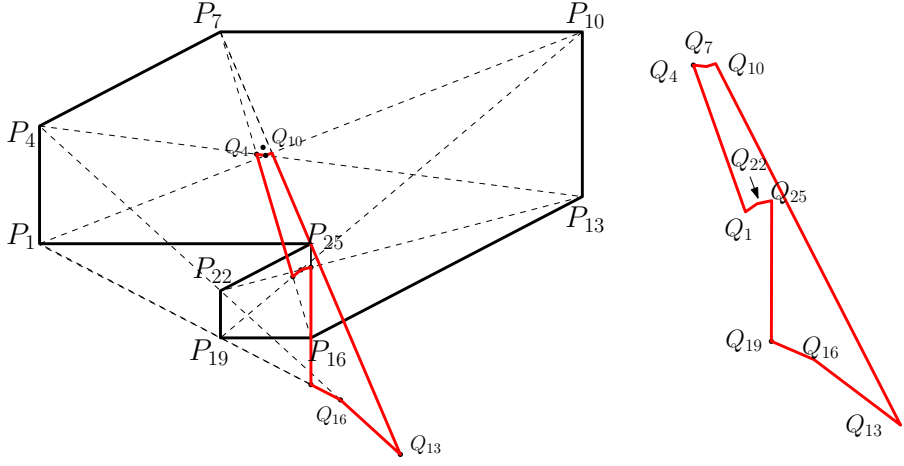} 
\end{center} 
\caption{\label{figure:3d_pentagram_example_1} An axis-aligned $9$-gon $P$ under the 3d pentagram map $T_3$.} 
\end{figure}

\begin{remark}
 It is clear that if a $k$-gon $V = V_1 V_2 ... V_k$ is axis-aligned in $\R^m$, then $m |k$.  In the rest of the section, we will denote an axis-aligned polygon in $\R^m$ by $P$, where $P$ is an $mn$-gon. 
\end{remark}

\begin{definition}
Let $P$ be an axis-aligned $mn$-gon in $\R^m \subset \P^m$, we define the center of mass $\mathscr C(P)$ of $P$ to be the arithmetic mean of the vertices in $P$ (regarded as vectors in $\R^m$). 
\end{definition}

\begin{remark}
As in the introduction, we could also define axis-aligned $mn$-gon as one with corresponding edges going through $n$ points, and define the center of mass in a projectively invariant way: by first applying a projective transformation so that $P$ has edges parallel to axis, then taking the centroid and transforming back. Again, for our purposes, it suffices to consider the case described in the above definitions. 
\end{remark}

We observe that an axis-aligned polygon in $\R^m \subset \P^m$ is corrugated by construction, we state this as a lemma:
\begin{lemma}
Let $P$ be an axis-aligned $mn$-gon in $\R^m$, then $P$ is a corrugated polygon in $\R^m$. Hence $T_m$ is defined. 
\end{lemma}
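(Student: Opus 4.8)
The plan is simply to unwind the two definitions involved and observe that, for an axis-aligned polygon, the corrugation condition collapses to the elementary fact that two parallel lines in affine space are coplanar. Fix an index $i$ and look at the four vertices $P_i, P_{i+1}, P_{i+m}, P_{i+m+1}$. By the definition of axis-alignment, the edge $\overline{P_i P_{i+1}}$ is parallel to the $x_j$-axis with $j \equiv i \pmod m$, while the edge $\overline{P_{i+m}P_{i+m+1}}$ is parallel to the $x_{j'}$-axis with $j' \equiv i+m \equiv i \equiv j \pmod m$. Hence the two edges lie on lines of $\R^m$ pointing in the \emph{same} coordinate direction $e_j$.

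The key step is then the observation that two lines in $\R^m$ sharing a common direction vector lie in a single affine $2$-plane: writing the first line as $\{a + s\,e_j : s \in \R\}$ and the second as $\{b + t\,e_j : t \in \R\}$, the affine plane $\Pi = \{a + s\,e_j + u(b-a) : s,u \in \R\}$ contains both of them. Passing to $\P^m \supset \R^m$, the projective closure $\overline{\Pi}$ is a projective plane containing all four points $P_i, P_{i+1}, P_{i+m}, P_{i+m+1}$, so these four vertices span a projective subspace of dimension at most $2$, i.e.\ they span a projective plane in the sense required by the definition of corrugation.

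Since $i$ was arbitrary, every quadruple $P_i, P_{i+1}, P_{i+m}, P_{i+m+1}$ is coplanar, so $P$ is a corrugated polygon in $\R^m \subset \P^m$. Consequently the successive $m$-diagonals $\overline{P_iP_{i+m}}$ and $\overline{P_{i+1}P_{i+m+1}}$ lie in a common projective plane, hence intersect, and therefore the corrugated pentagram map $T_m$ is defined on $P$, exactly as in the discussion preceding Lemma \ref{lemma:GSTV_corrugated}.

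There is essentially no real obstacle here; the only point deserving a word of care is the degenerate case in which the two parallel edges happen to be collinear. This is harmless — a line is contained in infinitely many $2$-planes, so the coplanarity conclusion still holds — and in any case it does not arise for the generic axis-aligned polygons to which Theorem \ref{T003} is applied.
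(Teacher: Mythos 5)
Your proof is correct and is essentially the argument the paper has in mind: the paper states this lemma without proof, remarking only that an axis-aligned polygon is "corrugated by construction," and your observation that the edges $\overline{P_iP_{i+1}}$ and $\overline{P_{i+m}P_{i+m+1}}$ point in the same coordinate direction and hence are coplanar is exactly the intended justification.
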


Now we describe the labelling convention for an axis-aligned $mn$-gon $P$, which is compatible with the case where $m=2$.
\begin{notation}
Let $P$ be an axis-aligned $mn$-gon in $\R^m \subset \P^m$, we label the $mn$ vertices of $P$ by $$P_1,\, \,  P_{m+1},\, \, P_{2m+1},\, \, ...,\, \, P_{(mn-1)m+1}.$$ Thus an $m$-diagonal has the form of $P_{j} P_{m^2+j}$.
\end{notation}

\begin{notation}
 Let $T_m (P) = Q$, then we label vertices of $Q$ by $$Q_{k}, \, \, Q_{m+k}, \, \, Q_{2m+k}, \, \, ...,  \, \, Q_{(mn-1)m+k}$$ where $k = \frac{m^2 +m +2 }{2}$. In particular, we let $Q_{k} = Q_{(m^2 +m +2 )/2}$ to be the intersection $P_1 P_{m^2+1} \cap P_{m+1} P_{m^2+m+1}$, and the index of the vertex in $Q$ comes from averaging the indices of the relevant $4$ vertices in $P$.
\end{notation}

\begin{example}
For an example of an axis-aligned $9$-gon in $\R^3$ with the specified labelling convention, see Figure \ref{figure:3d_pentagram_example_1}. The dotted lines represent $3$-diagonals and the image of $T_3(P)$ is also presented in the figure. \\

\end{example}

\subsection{The point of collapse under the corrugated pentagram map}\label{subsection:3.2}  
We break the proof of Theorem \ref{T003} into two parts. First we prove the theorem under the assumption that $n \ge m$, namely:

\subsubsection{The case when $m \le n$} 
\begin{theorem}\label{T003:n>m} 
Let $P$ be a generic axis-aligned $mn$-gon in $\R^m$, where $ n \ge m$, then $T_m^{n-1} (P)$ collapses to the center of mass of $P$, in other words, $$T_m^{n-1}(P) = \mathscr C (P).$$ 
\end{theorem}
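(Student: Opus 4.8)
The plan is to imitate the proof of Theorem \ref{T002} as closely as possible, replacing the ambient space $\R^n$ used there by $\R^{mn/?}$ -- more precisely, by a copy of $\R^n$ into which we lift a suitable collection of $n$-point sequences extracted from the $mn$ vertices of $P$. Concretely, I would first group the $mn$ vertices of the axis-aligned $mn$-gon $P$ into cyclic sequences $A_1, A_{m+1}, A_{2m+1}, \dots$ of $n$ points each, chosen so that consecutive sequences $A_j, A_{j+?}$ differ by a translation along one coordinate axis (this is exactly where $n\ge m$ is needed: we need enough ``room'' among the $n$ positions so that the $m$-diagonal structure of the corrugated map is reproduced by the mating process). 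Just as in Section \ref{sec:pentagram}, the corrugated pentagram map $T_m$ on $P$ should be recovered by a mating-type process $X_1 \to X_2 \to \dots \to X_{n-1}$ on these $n$-points, using that $P$ is corrugated (the lemma above) so that successive $m$-diagonals meet.

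Next I would define the \emph{parallel lifting} of the sequences $A_1, A_{m+1}, \dots$ into $\R^n$, embedding $\R^m \subset \R^n$ (by appending zeros in coordinates $x_{m+1}, \dots, x_n$) and lifting the $\ell$-th point of each sequence to a common ``height'' in the $x_{m+1}, \dots, x_n$ coordinates. The analogue of Lemma \ref{lemma:parallel_lifting_of_centroids} holds verbatim: the centroids of all the lifted sequences coincide at a single point $C \in \R^n$ with $\pi(C) = \mathscr C(P)$, because parallel lifting fixes the $\R^m$-coordinates and equalizes the extra coordinates, while the sets underlying the $A_j$'s repeat cyclically. Then I need the analogue of Lemma \ref{lemma:lifting_into_general_positions}: there is an axis-aligned polygon and a parallel lifting for which the spans $|\widetilde A_1|, |\widetilde A_{m+1}|, \dots$ are hyperplanes in general position. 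I expect this to be proved by the same strategy -- choose an explicit lifting $L_0$, compute the normal vectors $v_k$ as determinants involving the edge-parameters of $P$, and exhibit parameter values (e.g.\ arithmetic progressions perturbed by a small $\epsilon$ plus one large entry $\beta$) making the relevant matrix nonsingular. With these in hand, the notions of \emph{good}, \emph{fully sliced} and \emph{perfect} liftings, the cyclic skeletons $\Sigma_g T_k$, the flats $H_{g,k}$, and Lemmas \ref{lemma:intersection_gives_mating_process}, \ref{lemma:intersection_with_different_prisms}, \ref{lemma:fully_sliced_mating_process}, \ref{lemma:perfect_lifting_existence}, \ref{lemma:mating_gives_pentagram_map} all carry over with $m$-dependent index shifts, so that the top flat $H_{n-1,n-1}$ is a line $L_{n-1} \ni C$, its projection $l_{n-1}$ passes through $\mathscr C(P)$ and through the (half-)vertices of $T_m^{n-2}(P)$, and by the symmetric argument the collapse point $T_m^{n-1}(P)$ equals $\mathscr C(P)$ on a generic (Zariski-open, nonempty) subset.

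The main obstacle, and the step that genuinely needs $n \ge m$, is verifying that the mating process on the chosen sequences $A_1, A_{m+1}, \dots$ really reproduces the corrugated pentagram map $T_m$ -- i.e.\ the analogue of Lemma \ref{lemma:mating_gives_pentagram_map}. In the $m=2$ case the $n$-point $A_j$ uses every fourth vertex; for general $m$ one must choose the stride and the number of sequences so that (a) the $m$-diagonals $P_j P_{m^2+j}$ arise as lines joining corresponding points of the $\widetilde A$'s, (b) the $x_{m+1},\dots,x_n$-coordinates of corrupted points stay controlled, and (c) the number of mating steps is exactly $n-1$, matching the number of iterations before collapse. Getting all the indices consistent -- and checking that $H_{g,k}$ continues to slice the prisms transversely after the extra degeneracy forced by lifting from $\R^m$ rather than $\R^2$ -- is where the bookkeeping is delicate; the hypothesis $n\ge m$ is precisely what makes room for it. The complementary case $n < m$ will presumably be handled separately afterward, likely by a duality or reduction argument rather than by the lifting construction directly.
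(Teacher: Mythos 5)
Your proposal follows essentially the same route as the paper: the paper likewise forms the $n-1$ sequences $A_1, A_{m+1}, \dots, A_{(n-2)m+1}$ (with stride $m^2$ in the vertex indices), lifts them by a parallel lifting into $\R^n \supseteq \R^m$, carries over the centroid, general-position, slicing and mating lemmas with $m$-dependent index shifts, and concludes via the line $H_{n-1,n-1} \ni C$ together with the symmetry argument producing $m$ concurrent lines through $\mathscr C(P)$. The only small imprecision is your parenthetical on where $n \ge m$ enters: it is needed simply so that the embedding $\R^m \subset \R^n$ (appending coordinates $x_{m+1},\dots,x_n$) exists at all, and indeed the paper handles $n < m$ by restricting all the points to the copy of $\R^n \subset \R^m$ they span rather than by lifting.
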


\begin{example}
Figure \ref{figure:3d_pentagram_example_2} provides an example of the statement with $n=m=3$, in which case $T_3^2 (P) = \mathscr C (P).$ We omit the labelling of the vertices for the sake of presentation. 
\begin{figure}
\begin{center}
\includegraphics[scale=0.5]{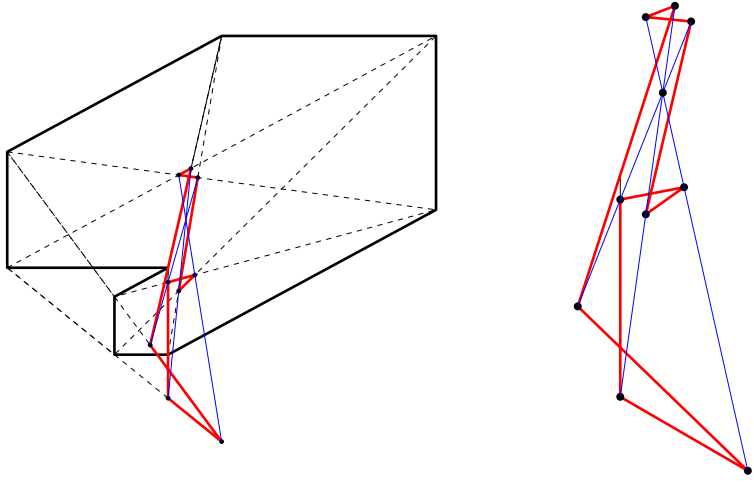} 
\end{center} 
\caption{\label{figure:3d_pentagram_example_2}  The point of collapse of an axis-aligned $9$-gon in $\R^3$ under  $(T_3)^2$.}
\end{figure}
\end{example}

The proof of Theorem \ref{T003:n>m} is analogous to the proof of Theorem \ref{T002}, and we shall provide less details. 

Suppose that $P = P_1 P_{m+1} P_{2m+1} \, ...\,  P_{(mn-1)m+1} \subset \R^m \subset \R^n$ is an axis-aligned $mn$-gon. We start with $n-1$ sequences of $n$-points in $\R^m$, constructed similarly as in Section \ref{sec:pentagram}: 

\begin{alignat*}{2}
 & A_1 &&= \left\{P_1(1),\,\,  P_{m^2+1}(1), \,\,  ..., \,\,  P_{ (n-2)m^2 +1 }(1),  \,\, P_{ (n-1)m^2 +1 }(1)\right\}   \\ 
 & A_{m+1} &&  = \left\{P_{m+1} (m+1), \,\,  P_{m^2+ m+1} (m+1) ,\,\,  ..., \,\,  P_{(n-1)m^2 + m+1}(m+1) \right\} \\
 & A_{2m+1} && =\left\{P_{2m+1} (2m+1), \,  P_{m^2+ 2m+1} (2m+1) , \, ..., \,  P_{(n-1)m^2 +2m+1}(m+1) \right\} \\ 
 & && \ldots  \\ 
 &  A_{(n-2)m+1} &&  =\left\{P_{(n-2)m+1} ((n-2)m+1), \,  ..., \,  P_{(n-1)m^2 + (n-2)m+1}((n-2)m+1) \right\}  
 \end{alignat*}  
\vspace*{0.15cm}

We lift these points to $\R^n$ by adding $x_{m+1}, ..., x_{n}$ coordinates to each point above. As before, a \emph{parallel lifting} is lifting such that for all $l = 1, ..., n$,  the $l^th$ point of each sequence $A_j$ (for appropriate $j$) has the same $x_{m+1}, ..., x_{n}$ coordinates, while the definitions of \emph{prisms} and \emph{polyjoints}  remain the same as in Section \ref{sec:pentagram}.


\begin{lemma}{\label{lemma:general_parallel_lifting_into_joint}} 
A parallel lifting lifts the sequence of $n$-points defined above $A_1, A_{m+1}, $  $ ...,  A_{(n-2)m + 1}$ to a polyjoint $(\widetilde A_1, \widetilde A_{m+1}, ..., \widetilde  A_{(n-2)m + 1} ).$
\end{lemma}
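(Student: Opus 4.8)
The plan is to mimic the proof of Lemma \ref{lemma:parallel_lifting_into_joint} verbatim, since the combinatorial structure is identical once we replace the ``step of $4$'' in the $2$-dimensional case by a ``step of $m^2$'' in the $m$-dimensional case. First I would set up notation: write $A_k(j)$ for the $j^{th}$ element of the sequence $A_k$, so that by construction $A_k(j) = P_{k + (j-1)m^2}(k)$ for the relevant odd-type indices $k \in \{1, m+1, 2m+1, \ldots, (n-2)m+1\}$, and similarly $\widetilde A_k(j)$ for the lifted point in $\R^n$. The content of the lemma is that for each such $k$, the polygon $(\widetilde A_1, \widetilde A_{m+1}, \ldots, \widetilde A_{(n-2)m+1})$ is a polyjoint, which by definition means each consecutive pair $\widetilde A_k \widetilde A_{k+m}$ is a prism, i.e.\ the $n$ lines $\overline{\widetilde A_k(j)\, \widetilde A_{k+m}(j)}$, $j = 1, \ldots, n$, are parallel (and in general position).

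The key observation is that in $\R^m$ itself, the $n$ lines $\overline{A_k(j)\, A_{k+m}(j)}$ joining corresponding vertices are already parallel: the vertex $A_k(j) = P_{k+(j-1)m^2}$ and the vertex $A_{k+m}(j) = P_{k+m+(j-1)m^2}$ are joined by the edge $P_{k+(j-1)m^2}\,P_{k+m+(j-1)m^2}$ of the axis-aligned polygon $P$, which by the definition of axis-aligned is parallel to the $x_i$-axis with $i \equiv k \pmod m$, independently of $j$. So all $n$ of these segments point in the same coordinate direction $x_i$. Now I would invoke the defining property of the parallel lifting: it assigns to the $j^{th}$ point of every sequence the same $x_{m+1}, \ldots, x_n$ coordinates. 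Hence for fixed $j$, the displacement vector $\widetilde A_{k+m}(j) - \widetilde A_k(j)$ has zero entries in coordinates $m+1, \ldots, n$ (the heights agree), and in coordinates $1, \ldots, m$ it agrees with $A_{k+m}(j) - A_k(j)$, which is a multiple of $e_i$. Therefore $\widetilde A_{k+m}(j) - \widetilde A_k(j)$ is a multiple of $e_i$ for every $j$, so the lifted lines $\overline{\widetilde A_k(j)\, \widetilde A_{k+m}(j)}$ are all parallel to the $x_i$-axis, and $\widetilde A_k \widetilde A_{k+m}$ is a prism.

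The only remaining point is the general-position clause built into the definition of prism/polyjoint, which here just amounts to the $n$ parallel lines being distinct and their feet being in general position; this holds for a generic axis-aligned polygon and is not disturbed by the lifting, so it follows exactly as in the proof of Lemma \ref{lemma:parallel_lifting_into_joint}. I do not expect any genuine obstacle: the proof is a direct transcription, with the main (very minor) subtlety being bookkeeping of the index arithmetic modulo $mn$ to confirm that consecutive sequences $A_k$ and $A_{k+m}$ really do consist of the endpoints of genuine edges of $P$ rather than diagonals — once that is checked, the parallelism and hence the polyjoint conclusion are immediate.
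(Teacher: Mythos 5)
Your proof is correct and takes essentially the same route as the paper, which simply defers to the two-dimensional argument of Lemma \ref{lemma:parallel_lifting_into_joint}: corresponding edges of $P$ all point in the same coordinate direction, and a parallel lifting preserves this because corresponding points are lifted to the same height. The only blemish is the bookkeeping claim that the common direction is the $x_i$-axis with $i \equiv k \pmod m$ (every $k$ in the list is $\equiv 1 \pmod m$; the correct residue is that of $(k-1)/m + 1$), but all the argument actually needs is that the direction is independent of $j$, which you establish.
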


\begin{lemma}{\label{lemma:general_parallel_lifting_of_centroids}} Suppose that a polyjoint $(\widetilde A_1, \widetilde A_{m+1}, ..., \widetilde  A_{(n-2)m + 1} ) $ is lifted from $A_1, A_{m+1}, $  $ ...,  A_{(n-2)m + 1}$ by a parallel lifting. Let $ C_i \in \R^n$ be the centroid of the $n$-vertices in $\widetilde A_i$ for each $i = 1, m+1, ..., (n-2)m+1$.  Then
\begin{equation*}  C_1 = C_{m+1} = ... = C_{(n-2)m+1}; \quad \text{and }
 \pi (C_1) = ... = \pi (C_{(n-2)m+1}) = \mathscr C (P)
\end{equation*} where $\pi: \R^n \rightarrow \R^m$ is the projection into the first $m$ coordinates. 
\end{lemma}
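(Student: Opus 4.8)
\textbf{Proof plan for Lemma \ref{lemma:general_parallel_lifting_of_centroids}.}

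The plan is to imitate the proof of Lemma \ref{lemma:parallel_lifting_of_centroids} verbatim, adapting only the bookkeeping of indices to the $mn$-gon situation. The two assertions are logically independent in flavour: the first ($C_1 = C_{m+1} = \cdots = C_{(n-2)m+1}$) is about the ``vertical'' coordinates $x_{m+1},\dots,x_n$ introduced by the lifting, while the second ($\pi(C_i) = \mathscr C(P)$ for all $i$) is about the ``horizontal'' coordinates $x_1,\dots,x_m$ and lives entirely in $\R^m$.

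First I would observe that, since $\pi$ is linear and $\pi(\widetilde A_i(j)) = A_i(j)$ by the definition of a lifting, the point $\pi(C_i)$ is exactly the centroid (arithmetic mean) of the $n$ points of $A_i$ in $\R^m$. So for the second assertion it suffices to show that the $n$-point set $A_i$ has the same centroid for every $i = 1, m+1,\dots,(n-2)m+1$, and that this common centroid is $\mathscr C(P)$. The key combinatorial fact is that, as \emph{unordered sets} of points in $\R^m$, the $A_i$ are all the same: each $A_i$ consists of the $n$ vertices $P_{i + km^2 + 1}$ (indices mod $mn$, using the labelling convention $P_1, P_{m+1},\dots$), $k = 0,\dots,n-1$, and since $\gcd(m^2, mn) = m\gcd(m,n)$ one has to check that stepping by $m^2$ from any fixed residue class mod $m$ visits each of the $n$ vertices in that class exactly once — this uses $\gcd(m, n) $ dividing things appropriately, or more simply the fact that $P$ being axis-aligned forces the relevant partial sums of ``edge displacements'' to cancel, exactly as $\pi(C_1) = \pi(C_3)$ was deduced from $P$ being axis-aligned in the $m=2$ case. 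Hence all $A_i$ have the same centroid, which is by construction $\frac{1}{n}\sum_{k} P_{km^2+1}$ over one residue class, and averaging over the $m$ residue classes (or a direct reindexing) identifies it with $\mathscr C(P) = \frac{1}{mn}\sum P_{\bullet}$.

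For the first assertion, by the definition of the parallel lifting the $l^{\text{th}}$ point $\widetilde A_i(l)$ of each sequence has \emph{the same} $x_{m+1},\dots,x_n$ coordinates, independent of $i$. Therefore the average over $l = 1,\dots,n$ of these coordinates is also independent of $i$, i.e.\ the $x_{m+1},\dots,x_n$ coordinates of $C_i$ do not depend on $i$; combined with the equality of the first $m$ coordinates from the previous paragraph, this gives $C_1 = C_{m+1} = \cdots = C_{(n-2)m+1}$. Finally $\pi(C_i) = \mathscr C(P)$ is just the second paragraph restated.

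I do not expect a genuine obstacle here: the only thing requiring care is the index arithmetic showing that the cyclic sequences $A_i$ coincide as sets and that their common centroid is the global centroid $\mathscr C(P)$ — precisely the place where ``$P$ is axis-aligned'' enters, mirroring the one sentence ``Since $P$ is axis-aligned, we know that $\pi(C_1) = \pi(C_3)$'' in the proof of Lemma \ref{lemma:parallel_lifting_of_centroids}. Once that reindexing is pinned down, both displayed equalities follow immediately from linearity of $\pi$ and the defining property of the parallel lifting.
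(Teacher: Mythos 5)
Your proposal is correct and follows essentially the same route as the paper, which itself just defers to the two-dimensional case (linearity of $\pi$ plus the axis-aligned cancellation for the horizontal coordinates, and the defining property of the parallel lifting for the vertical ones). One small correction: the $A_i$ are \emph{not} all the same as unordered sets — they are the distinct residue classes of vertices modulo $m$ (with $A_i = A_j$ as sets only when $i \equiv j \pmod{m^2}$) — but your fallback argument is the right one: since each edge of the axis-aligned polygon changes exactly one coordinate, any $m$ consecutive vertices share each fixed coordinate $x_l$ in runs that meet every residue class once, so all the class centroids coincide with $\mathscr C(P)$.
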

\begin{proof}[Proof of Lemma \ref{lemma:general_parallel_lifting_into_joint} and \ref{lemma:general_parallel_lifting_of_centroids}]

Both lemmas are clear by carefully keeping track of definitions, and the details are similar to the proofs of Lemma \ref{lemma:parallel_lifting_into_joint} and \ref{lemma:parallel_lifting_of_centroids}.
\end{proof}

\begin{lemma}{\label{lemma:general_lifting_into_general_positions}} There exists an axis-aligned polygon with a parallel lifting of $A_1, ...,$ $A_{(n-2)m+1}$ such that all subspaces $|\widetilde A_1|, |\widetilde A_{m+1}|, ..., |\widetilde  A_{(n-2)m + 1} |$ are hyperplanes in general position. 
\end{lemma}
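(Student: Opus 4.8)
The plan is to mimic the proof of Lemma \ref{lemma:lifting_into_general_positions} essentially verbatim, adapting the bookkeeping to the higher-dimensional setting. First I would fix an explicit parallel lifting $L_0$ analogous to the one used for $m=2$: give coordinates to the vertices of the axis-aligned $mn$-gon $P$ in $\R^m$ in terms of the ``free'' parameters (the first vertex of each diagonal run determines the rest, since consecutive edges are forced to be parallel to successive coordinate axes), place $P_1$ at the origin, and for each sequence $A_k$ fix the first $m$ points in $\R^m$ while lifting the $j$-th point ($j \ge m+1$) by $1$ in the $x_j$-direction only and $0$ in the other new coordinates. The hyperplane $|\widetilde A_k|$ is then cut out by a normal vector $v_k$ which can be written as the formal determinant $v_k = \det(e, \widetilde A_k(2)-\widetilde A_k(1), \ldots, \widetilde A_k(n)-\widetilde A_k(1))^T$ where $e=(e_1,\ldots,e_n)$; because columns $m+1,\ldots,n$ of this matrix are the same for every $k$ (a subdiagonal of $1$'s topped by part of the basis), the determinant expands cleanly and the first $m$ coordinates of $v_k$ are $\pm$ the $(m-1)\times(m-1)$ minors built from the planar coordinates, exactly as in the $m=2$ case where they were $2\times 2$ minors.

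Next I would specialize the planar coordinates to make the computation tractable — the natural analogue of setting $a_j = j-1$ in the original proof — so that the matrix $M = [v_1, v_{m+1}, \ldots, v_{(n-2)m+1}, v_0]^T$ (with $v_0$ a convenient extra vector, e.g.\ a coordinate vector) becomes sparse and block-triangular in the remaining free parameters. Then, exactly as before, I would form the differences $u_k = v_{k+?} - v_k$ to strip off the leading, common part of successive normal vectors, exhibit the resulting matrix in a $2\times 2$ block form $\begin{bmatrix} M_1 & M_2 \\ M_3 & M_4 \end{bmatrix}$ with $M_1$ a zero block and $M_2$ diagonal (up to scaling by the free $b$-parameters), so that $\det M = \pm \det M_2 \det M_3$. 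Finally I would choose the remaining parameters in two stages: set most of them equal to a small $\epsilon$ so the ``diagonal part'' $\det M_2$ is visibly nonzero, and isolate one large parameter $\beta$ so that $M_3$ is a small perturbation of an explicitly invertible matrix $M_\beta$ (anti-triangular with a correcting row), forcing $\det M_3 \ne 0$. This proves $v_1,\ldots,v_{(n-2)m+1}$ linearly independent, hence the hyperplanes in general position for this polygon, and since this is an open condition, it holds generically.

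The main obstacle I anticipate is purely combinatorial: in the $m=2$ case the $n-1$ sequences $A_1, A_3, \ldots$ step through the vertices with a fixed offset and the resulting normal vectors $v_k$ have a transparent ``shift'' structure that made the final matrix manipulations (the $u_k$ differences, the anti-triangular $M_\beta$) fall out cleanly. For general $m$ the sequences $A_1, A_{m+1}, \ldots, A_{(n-2)m+1}$ are indexed by a coarser arithmetic progression and the entries of $v_k$ are $(m-1)\times(m-1)$ minors rather than $2\times 2$ ones, so the precise pattern of which minors collapse under the specialization $a_j = j-1$ — and hence the exact shape of $M_2$, $M_3$, $M_\beta$ — requires care, and one has to make sure the number of free $b$-parameters is large enough to run both the $\epsilon$-perturbation and the large-$\beta$ argument simultaneously. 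I expect no genuinely new idea is needed beyond the $m=2$ proof, which is presumably why the paper will state this lemma and defer to ``carefully keeping track of definitions'', but writing the matrix $M_3$ down explicitly and checking its invertibility is the one step that is not automatic. I would also note at the end that a cleaner (if less self-contained) alternative is the geometric argument alluded to in the proof of Lemma \ref{lemma:lifting_into_general_positions}, which generalizes directly: one builds the lifted hyperplanes inductively, at each stage using the freedom in the lifting heights of the not-yet-constrained points to push the newest normal vector off the span of the previous ones.
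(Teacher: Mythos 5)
Your proposal matches the paper's approach: the paper's own proof of this lemma is a one-line deferral stating that the argument is the same as that of Lemma \ref{lemma:lifting_into_general_positions}, only easier because the $mn$ vertices in $\R^m$ give more free coordinates with which to make the determinant criterion nonzero. Your detailed reconstruction of the $L_0$ lifting, the normal-vector determinants, and the $\epsilon$/$\beta$ specialization is exactly the intended adaptation, and your closing remark about the extra freedom in parameters is precisely the point the paper emphasizes.
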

\begin{proof} The proof is similar to the proof of Lemma \ref{lemma:lifting_into_general_positions}, except that we have more freedom to choose coordinates for the $mn$ points,  so the criterion of the determinant of a similar matrix as in Lemma \ref {lemma:lifting_into_general_positions} being nonzero is easier to satisfy.
\end{proof}

The rest of the construction is again similar to the $2$-dimensional case. We adjust the notion of mating process and slicing  to $\R^m$, while while keeping the same definitions of cyclic skeleton, perfect lifting, etc.  It is not surprising that similar results in Section \ref{sec:pentagram} extend naturally to the higher dimensional case. 

\begin{definition}[higher mating]
Let $X = (x_1, x_3, ..., x_{2n-1}) $ and $Y= (y_1, y_3, ..., y_{2n -1})$ be 2 sequences of $n$ points in $\R^m$. We say that $X $ and $Y$ are relatively corrugated, or a corrugated pair, if the $4$ points $x_{i_1}, x_{i+1}, y_{i-1}, y_{i+1}$ span a plane in $\R^m$ for all $i$. If $X, Y$ are a corrugated pair, then  we can construct $Z = X * Y$ by the same rule as before by defining 
$$z_j = \overline{x_{j-1} x_{j+1}} \cap \overline{y_{j-1} y_{j+1}}.$$ 
Let  $X_1 = (X_{1,1} , X_{1,3} , ..., X_{1,2m-1}) $ be a sequence of $n$-points in $\R^m$, such that each pair of successive $n$-points $X_{1, 2k-1}$ and $X_{1, 2k+1}$ are relatively corrugated, then we form a new sequence   $X_2 = (X_{2,2} , X_{2,4} , ..., X_{2,2m-2})$ as in the mating process, where $X_{2,j} = X_{1,j-1} * X_{1,j+1}$. Lemma \ref{lemma:GSTV_corrugated} says that each pair of 2 successive $n$-points in $X_2$ is again relatively corrugated. As before, we denote $X_1 \rightarrow X_2$ whenever the process is well defined. This process is called the higher mating process. 
\end{definition}

\begin{definition}[higher slicing]
We have the same definition for slicing, pairwise slicing and fully slicing as in Section \ref{sec:pentagram}. The only notable change is that our projection $\pi: \R^n \rightarrow \R^m$ is projecting to $\R^m$ instead of $\R^2$, again by dropping the $x_{m+1}, ..., x_n$ coordinates.  We retain notations from Section \ref{sec:pentagram}.
\end{definition}

Following the proofs line by line of Lemma \ref{lemma:intersection_gives_mating_process} to Lemma \ref{lemma:perfect_lifting_existence}, we note that the statements of   Lemma \ref{lemma:intersection_gives_mating_process}  to  \ref{lemma:perfect_lifting_existence} remain the same (under corresponding interpretations of notations in $\R^m$ instead of $\R^2$) in higher dimensions. While the  proof of the corresponding lemma of Lemma \ref{lemma:mating_gives_pentagram_map} is word by word similar as before, certain notations need to be adjusted.

\begin{lemma}\label{lemma:general_mating_gives_pentagram_map}
Let $Y_1 = (A_1, A_{m+1}, A_{2m+1}, ..., A_{(n-2)m+1})$ where each $A_i$ is defined as above, then the higher mating process $$Y_1 \rightarrow Y_2 \rightarrow ... \rightarrow Y_{d+1}$$ is well defined for $d$ steps if and only if the $T^{d}$ is well defined on $P$. In addition, assume that  mating process $$Y_1 \rightarrow Y_2 \rightarrow ... \rightarrow Y_{n-1}$$ is well defined,  then the underlying set of $Y_i$ gives corresponding vertices of $T_m^{i-1}(P)$ for $i \le n - 1$. 
\end{lemma}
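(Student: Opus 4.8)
The plan is to follow the blueprint of Lemma \ref{lemma:mating_gives_pentagram_map} essentially verbatim, since nothing in that argument used the planarity ($m=2$) except the labelling bookkeeping. First I would establish the forward direction: assuming $T_m^d$ is well defined on $P$, I show the higher mating process $Y_1 \to Y_2 \to \cdots \to Y_{d+1}$ is well defined. The key point is that the construction of the $A_i$ is arranged precisely so that, for each successive pair $A_{im+1}, A_{(i+1)m+1}$, the four relevant vertices of $P$ occurring in a mating diamond are exactly the four vertices $P_j, P_{j+m}, P_{m^2+j}, P_{m^2+j+m}$ spanning an $m$-diagonal intersection --- i.e.\ the corrugation condition. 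So $Y_1$ is a corrugated sequence, and by Lemma \ref{lemma:GSTV_corrugated} each $Y_i$ stays corrugated, so each successive product $X*Y$ in the higher mating process is well defined exactly when the corresponding $m$-diagonals of $T_m^{i-1}(P)$ meet, which is exactly the condition that $T_m^{i-1}(P)$ has no degenerate vertex, i.e.\ that $T_m^d$ is defined. This gives the ``only if''; the ``if'' direction is the same equivalence read backwards.

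Next I would prove the identification of the underlying sets. This is an induction on $i$. The base case $i=1$ is the definition of the $A_i$ together with the labelling convention: the union of the sets $A_1, A_{m+1}, \ldots, A_{(n-2)m+1}$ is exactly $\{P_1, P_{m+1}, \ldots, P_{(mn-1)m+1}\}$, the vertex set of $P = T_m^0(P)$. For the inductive step, I would check that a single higher mating step $z_j = \overline{x_{j-1}x_{j+1}} \cap \overline{y_{j-1}y_{j+1}}$, applied to the sequence whose underlying set is the vertex set of $T_m^{i-2}(P)$ with the inherited cyclic labelling, produces exactly the $m$-diagonal intersections defining $T_m^{i-1}(P)$; the index of $z_j$ is the average of the four indices, which matches the labelling convention $Q_k = Q_{(m^2+m+2)/2}$ chosen for $T_m(P)$. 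The only subtlety is tracking the index shifts: in Section \ref{sec:pentagram} the ``spacing'' of consecutive points within each $A_i$ was $4 = m^2$ for $m=2$, and here it is $m^2$, while consecutive sequences $A_{im+1}$ are offset by $m$; one has to verify these are consistent with the cyclic structure of an $mn$-gon, which they are since $m \mid mn$.

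The main obstacle --- really the only nonroutine point --- is the statement about the last step, i.e.\ what exactly $Y_{n-1}$ is as a subset of $T_m^{n-2}(P)$. In the $m=2$ case $Y_{n-1}$ was a single $n$-point giving ``every other vertex'' of the $2n$-gon $T^{n-2}(P)$. In dimension $m$, $T_m^{n-2}(P)$ is an $mn$-gon and $Y_{n-1}$ is again a single $n$-point, so it picks out $n$ of its $mn$ vertices; I would need to determine, by carefully chasing the index arithmetic through the $n-2$ mating steps (each step averages indices and halves the number of sequences), which residue class mod $m$ of vertices survives, and confirm it is a single well-spaced sub-$n$-gon. This is the assertion I would phrase carefully in the lemma statement (``corresponding vertices'') and then pin down precisely in the proof; everything else is bookkeeping identical to the proof of Lemma \ref{lemma:mating_gives_pentagram_map}, so I would simply say ``this follows by the same argument, keeping track of the relevant indices'' and spell out only the index computation for the final step.
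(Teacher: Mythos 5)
Your proposal is correct and follows essentially the same route as the paper, which in fact gives no separate proof of this lemma beyond the remark that it is ``word by word similar'' to Lemma \ref{lemma:mating_gives_pentagram_map} (whose own proof is just ``keep track of the relevant indices''); your version, which invokes Lemma \ref{lemma:GSTV_corrugated} to keep successive sequences relatively corrugated and makes the index bookkeeping explicit, supplies strictly more detail than the paper does. One small correction to your base case: when $n=m$ the union of the $n-1$ sets $A_1,\dots,A_{(n-2)m+1}$ covers only the vertex residue classes $0,\dots,n-2 \bmod m$, hence misses one class of $n$ vertices of $P$ --- harmless, since the lemma only claims the $Y_i$ give ``corresponding'' (not all) vertices and the main proof only uses one such chain plus symmetry, but the base case should be phrased accordingly.
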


Now we are ready to prove the main theorem for corrugated pentagram maps.


\begin{proof}[Proof of Theorem \ref{T003:n>m}] 

Let $P = P_1 P_{m+1} P_{2m+1} \, ...\,  P_{(mn-1)m+1} \subset \R^m$ be an axis-aligned $mn$-gon such that $T_m^{n-1}$ is well defined on $P$. 

Let $A_1, A_{m+1},  ...,  A_{(n-2)m + 1}$ be defined as above, and $(\widetilde A_1, \widetilde A_{m+1}, ..., \widetilde  A_{(n-2)m + 1} )$ a polyjoint lifted from $A_1, A_{m+1}, ...,  A_{(n-2)m + 1}.$ 

Let $Y_1 = (A_1, A_{m+1},  ...,  A_{(n-2)m + 1})$. Since $T_m^{n-1}$ is well defined on $P$, the higher mating process is well defined on $Y_1$ for $n-1$ steps, so we have 
$$Y_1 \rightarrow Y_2 \rightarrow ... \rightarrow Y_{n-1}.$$

Define hyperplanes $H_{1,1}, H_{1,m+1}, ..., H_{1,{(n-2)m-1}}$ as in Section \ref{sec:pentagram} (see discussion above Lemma \ref{lemma:intersection_with_different_prisms}), and define $X_1, X_2, ..., X_{n-1}$ as in the definition proceeding Lemma \ref{lemma:fully_sliced_mating_process}. By definition of perfect lifting, we know that $H_{1,1}, ..., $ $H_{1,(n-2)m+1}$ are in general position and the joint  $(\widetilde A_1, \widetilde A_{m+1}, ..., \widetilde A_{(n-2)m+1})$ is fully sliced. The analogous statement of Lemma \ref{lemma:fully_sliced_mating_process} tells us that $$X_1 =  Y_1 = (A_1, A_{m+1},  ...,  A_{(n-2)m + 1}) \quad \text{ and } \quad  X_1 \rightarrow X_2 \rightarrow ... \rightarrow X_{n-1}.$$ 
Therefore, $Y_i = X_i \text{ for all } i \le n-1.$ In particular $$Y_{n-1} = X_{n-1} = (X_{n-1, n-1}) = \pi (H_{n-1, n-1} \cap \Sigma_{n-1} T_h),$$ which consists of $n$ points. Similar to before,  $H_{n-1, n-1}$ is a line as the transverse intersection of $(n-1)$ hyperplanes. 

The $n$ points $\widetilde X_{n-1} = H_{n-1, n-1} \cap \Sigma_{n-1} T_h $ in space are therefore collinear, through a line $L_{n-1} = H_{n-1, n-1} \subset \R^n$. Let $l_{n-1} = \pi (L_i)$ be the projection into $\R^m$, and it is clear that $l_{n-1}$ goes through all points in $\pi(\widetilde X_{n-1})  = X_{n-1} = Y_{n-1}$.

Let $ C_i $ be the centroid of the $n$-vertices in $\widetilde A_i$ as in Lemma \ref{lemma:general_parallel_lifting_of_centroids}, so $$C_1 = C_{m+1} = ... = C_{(n-2)m+1} = C.$$  Therefore by a similar argument in the proof of Theorem \ref{T002}, the line $l_{n-1}$ goes through $\mathscr C (P) = \pi (C)$.   Again by symmetry, we know that the other $m-1$ similarly constructed lines also go through the center of mass $\mathscr C(P)$. This proves our theorem for the corrugated pentagram case, at least when $m \le n$.
\end{proof}

\subsubsection{The case where $2 \le n < m$} \label{subsection:3.3}  

Now we discuss the the remaining case, namely when $2 \le n < m$. Note that we can no longer lift points by adding extra coordinates, so instead of extending to higher dimensions, we restrict certain points to lower dimensions. We still start from the collection of $n$ points $A_1, A_{m+1}, ..., A_{(n-2)m+1}$, each consisting of $n$ points in general position in $\R^m$. All points in this collection span a subspace of dimension $n$ inside $\R^m$, since we can start from $n$ points in $A_1$ and reach all the points in $A_2$ following the direction along the $x_1$-axis, and reach all the points in $A_3$ along the $x_2$ direction, and so forth. Namely, they all live in a copy of $\R^n \subset \R^m$. Though the points in $A_i$ no longer span a hyperplane in $\R^m$, they certainly do in this copy of $\R^n$.

Now nothing else is changed.  We have $J_i = |A_i|$, which is an $(n-1)$ dimensional flat in $\R^m$ and a hyperplane in $\R^n$, and we have $n-1$ copies of these $(n-1)$-flats, in the same copy of $\R^n$ spanned by all the points considered. The transverse intersections are given for free this time, since the $A_i$ can be chosen such that the $(n-1)$ flats are in general position.  We then carry out the same analysis as in the previous case, and the desired result follows immediately. This together with the case $n \ge m$ concludes the proof of Theorem \ref{T003}.

\section{{Lower pentagram map and the mirror pentagram map}}\label{sec:lowerpentagram}

In this section, we introduce the notion of the lower pentagram map, define a new system called the mirror pentagram map, and prove the center of mass results of ``axis-aligned polygons'' for the mirror pentagram maps (Theorem \ref{T007}), for the lower pentagram maps (Theorem \ref{T008}) and for the cross ratio frieze patterns discussed in the introduction (Theorem \ref{T005}).
We point out that the methods to prove the seemingly algebraic results mentioned above, in particular Theorem \ref{T008}, are purely geometric. It is in fact similar to the proof of Theorem \ref{T002} for the usual pentagram map. 

This section is organized as follows: 
\begin{itemize}
\item We first introduce the definition of the lower pentagram maps. Based on the obstruction to taking diagonal polygons, we define the notion of the mirror pentagram map, which serves as taking $1$-diagonals of polygons in $\P^1$.
\item We discuss properties of the mirror pentagram map, and then move on to prove Theorem \ref{T007}, which is an analogous ``center of mass'' result on for the mirror pentagram map. Not surprisingly, this is achieved through a similar controlled lifting method. 
\item Based on this, we prove Theorem \ref{T008}. More specifically, given $A_0$ and $A_1$ as in  \ref{T005}, we lift the $n$ points in $A_1$ from $\P^1$ to $\P^2$. On these $n$ points in $\P^2$, we apply the mirror pentagram maps. We then show that Theorem \ref{T007} implies Theorem \ref{T008} by taking appropriate projections. 
\item In the end, we show that Theorem \ref{T008} implies Theorem \ref{T005} by restricting to some sub-rows of points in the cross ratio frieze pattern in \ref{T005}.
\end{itemize} 

\subsection{The lower pentagram map} 
\begin{figure}
\begin{center}
\includegraphics[scale=0.6]{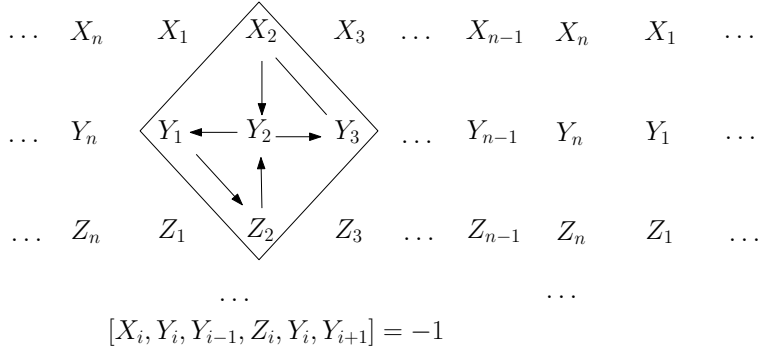} 
\end{center} 
\caption{\label{figure:lower_pentagram_definition}The definition of the lower pentagram map.}
\end{figure}

Gekhtman, Shapiro, Tabachnikov and Vainshtein considered a lower dimensional analog of the pentagram map  in \cite{GSTV}. Let $X, Y$ be two polygons (cyclically labelled points) in $\P^1$ in general position. The lower pentagram map $T_1$ takes the pair $(X, Y)$ to another pair of polygons $(Y,Z)$, where $Z = Z_1 Z_2 .... Z_m$ is determined from $X = X_1 X_2 ... X_m$ and $Y = Y_1 Y_2 ... Y_m$  by the following procedure: $Z_i$ is the unique point in $\P^1$ such that the six-point cross ratio 
$$[X_i, Y_i, Y_{i-1}, Z_{i}, Y_i, Y_{i+1}] = -1.$$ Here $[a, b, c, d, e, f] = \displaystyle \frac{(a-b)(c-d)(e-f)}{(b-c)(d- e)(f-a)}$. 

\begin{illustration}
An illustration of the definition is given in Figure \ref{figure:lower_pentagram_definition}
\end{illustration}

$T_1$ is analogous to the pentagram map $T$ in the following sense: let us consider a polygon $P \subset \P^2$ and apply the pentagram map to obtain $T(P)$ and $T^2 (P)$, as shown in Figure \ref{figure:lower_pentagram_and_pentagram}. 
Consider the 6 points $P_{i} \in P$ and $Q_{i \pm 1}, Q_{i \pm 3} \in T(P)$ and $R_i \in T^2 (P)$ in the figure. The Menelaus theorem from projective geometry says that the following ratio of signed lengths: 
$$\frac{P_i Q_{i-1}}{Q_{i-1} Q_{i-3}} \cdot \frac{Q_{i-3}R_{i}}{R_i Q_{i+1}} \cdot \frac{Q_{i+1} Q_{i+3}}{Q_{i+3} P_{i}} = -1. $$ With a moderate abuse of notation, we can write this as 
$$[P_{i}, Q_{i-1}, Q_{i-3}, R_{i}, Q_{i+1}, Q_{i+3}] = -1,$$ which is analogous to the $6$-point cross ratio that defines the $T_1$ map. Indeed, we know that if we project the $6$-points in Figure \ref{figure:lower_pentagram_and_pentagram} 
to any $\P^1 \subset \P^2$, the cross ratio will be preserved. In particular, we can take the projection in the direction $Q_{i+1} Q_{i-1}$ as suggested in the figure, in which case the projected points (after properly relabelling) satisfy the defining relation of the $T_1$ map: 
$$[X_i, Y_i, Y_{i-1}, Z_{i}, Y_i, Y_{i+1}] = -1.$$

\begin{figure}
\begin{center}
\includegraphics[scale=0.6]{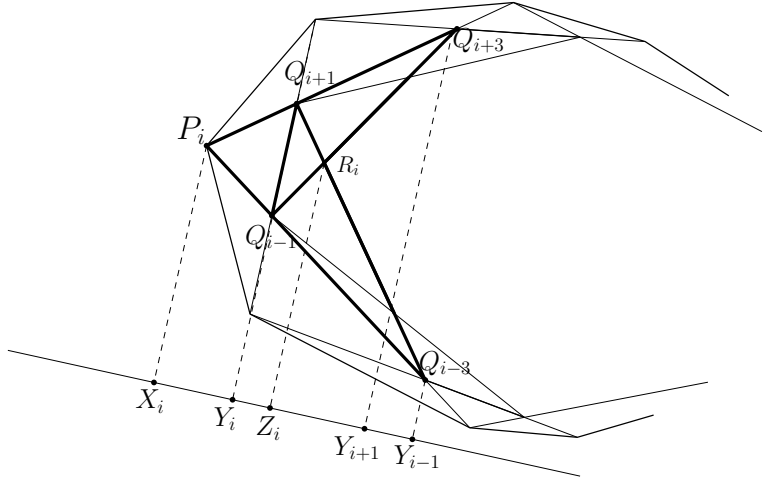} 
\end{center} 
\caption{\label{figure:lower_pentagram_and_pentagram} Analogy between the lower pentagram map and the pentagram map.} 
\end{figure}

In \cite{Glick1} Glick showed that certain sets of points under the lower pentagram map $T_1$ have the Devron (collapsing) property.  However, there was no concrete geometric description of such systems on $\P^1$. In the next subsection, we give a geometric construction of the lower pentagram map, in the same spirit of how the pentagram map was initially defined by Schwartz. 

With the supporting geometry, we can extend our theorems on pentagram maps and corrugated pentagram maps to the $1$-dimensional case.  First, we have an analogous notion of axis-aligned polygons in $\P^1$: 

\begin{definition}
Let $(X, Y)$ be described as above, we say the $2n$ points form an axis-aligned polygon if  $X$ only consists of  $n$ points at infinity. 
\end{definition}

Now, we can state the center of mass theorem for the lower pentagram map $T_1$, 

\begin{theorem}\label{T008}
Let $(\infty, B)$ be a generic axis-aligned $n$-gon  such that $T_1^{n-1}$ is defined, then $T_{1}^{n-1}(\infty, B) = (C, D)$ where $D = (d_1, d_2, ..., d_n)$ and $$d_1 = d_2 =... = d_n = \mathscr C (B).$$
\end{theorem}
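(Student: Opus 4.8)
\textbf{Proof proposal for Theorem \ref{T008}.}

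The plan is to mimic the strategy used for Theorem \ref{T002}, but with a twist: rather than lifting points in $\P^1$ into higher-dimensional real space directly, I would first exhibit the mirror pentagram map (to be introduced in the next subsection) as a shadow of an honest pentagram-type construction in $\P^2$. Concretely, given the axis-aligned $n$-gon $(\infty, B)$ with $B = (B_1, \dots, B_n) \subset \P^1$, I would lift the $n$ points of $B$ to $n$ points $\widetilde B = (\widetilde B_1, \dots, \widetilde B_n)$ in a suitably chosen affine chart of $\P^2$, arranged so that the six-point cross-ratio relation defining $T_1$ becomes a Menelaus relation among diagonals of a genuine polygon, exactly as in the discussion surrounding Figure \ref{figure:lower_pentagram_and_pentagram}. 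The key constraint on the lift is that the hypothesis ``$X$ consists of $n$ copies of $\infty$'' should translate into the lifted configuration being ``axis-aligned'' in the sense appropriate to the $\P^2$ picture, so that the machinery of Section \ref{sec:pentagram} (or Theorem \ref{T007}, once proved) applies verbatim.

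First I would set up the geometric dictionary: show that one step of the mirror pentagram map on $n$ points in $\P^1$ is realized by projecting, in a fixed direction, one step of the pentagram map (or corrugated map) applied to the lifted polygon in $\P^2$; this is precisely the content of the intended Theorem \ref{T007} and its supporting lemmas. Then I would verify that the center of mass is compatible with this lift and projection — i.e., that the affine average $\mathscr C(B) = \frac{1}{n}\sum B_i$ in $\P^1$ is the image under the chosen projection of the center of mass of the lifted polygon, using that the lift was chosen affinely (so averaging commutes with it) and that the projection direction was chosen to be the one sending the relevant line of collapse to the point $\mathscr C(B)$. Next, apply Theorem \ref{T007}: after $n-1$ iterations the lifted system collapses, with the collapsed object lying on a line through the lifted center of mass; projecting down, the $n$ points of $D$ all coincide with $\mathscr C(B)$. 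Finally I would check the count of iterations matches ($n-1$ steps for both the lifted map and $T_1$) and that genericity of $(\infty, B)$ guarantees $T_1^{n-1}$ is defined and all intermediate configurations are nondegenerate, invoking the Devron-property results of Glick \cite{Glick1} together with the Zariski-openness arguments already used for Lemmas \ref{lemma:lifting_into_general_positions} and \ref{lemma:perfect_lifting_existence}.

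The main obstacle I anticipate is the first step — constructing the lift $B \mapsto \widetilde B$ that simultaneously (i) intertwines $T_1$ with a pentagram-type map in $\P^2$ through a \emph{fixed} projection, (ii) sends the ``all $X_i = \infty$'' hypothesis to a genuinely axis-aligned (or corrugated axis-aligned) configuration upstairs, and (iii) respects centers of mass. Getting all three at once requires choosing the affine chart and the projection direction carefully, and checking that the mirror pentagram map's defining cross-ratio $[X_{(i-1,k)}, X_{(i,k-1)}, X_{(i+1,k)}, X_{(i,k+1)}] = -1$ is exactly the Menelaus relation produced by the construction — not merely analogous to it. Once this dictionary is nailed down rigorously (which is really the job of Theorem \ref{T007} and the geometry preceding it), Theorem \ref{T008} should follow as a corollary with only bookkeeping of indices and a routine genericity argument, just as Theorem \ref{T005} will then follow from Theorem \ref{T008} by restricting to appropriate sub-rows of the cross-ratio frieze pattern.
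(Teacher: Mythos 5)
Your proposal follows essentially the same route as the paper: lift $B$ to a polygon $P$ on a horizontal line in $\P^2$ so that $(P,r(P))$ is an axis-aligned pair with $p(P)=B$, use the Menelaus-based intertwining of $T_1$ with the mirror pentagram map under the projection $p$ (Lemma \ref{lemma:lower_pentagram_commutes_with_mirror_pentagram}), apply the collapse result Theorem \ref{T007}, and project back down. The only minor imprecision is attributing the intertwining dictionary to Theorem \ref{T007} itself rather than to the separate lemma, but the decomposition of the argument and the role of each ingredient match the paper's proof.
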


\begin{example}
For example, if we take $3$ points $1, 2, 6$ as $B$, then we obtain the following pattern where $2$ iterations of $T_1$ give us the constant row with constant $3$.  \\
\begin{center}
\includegraphics[scale=0.5]{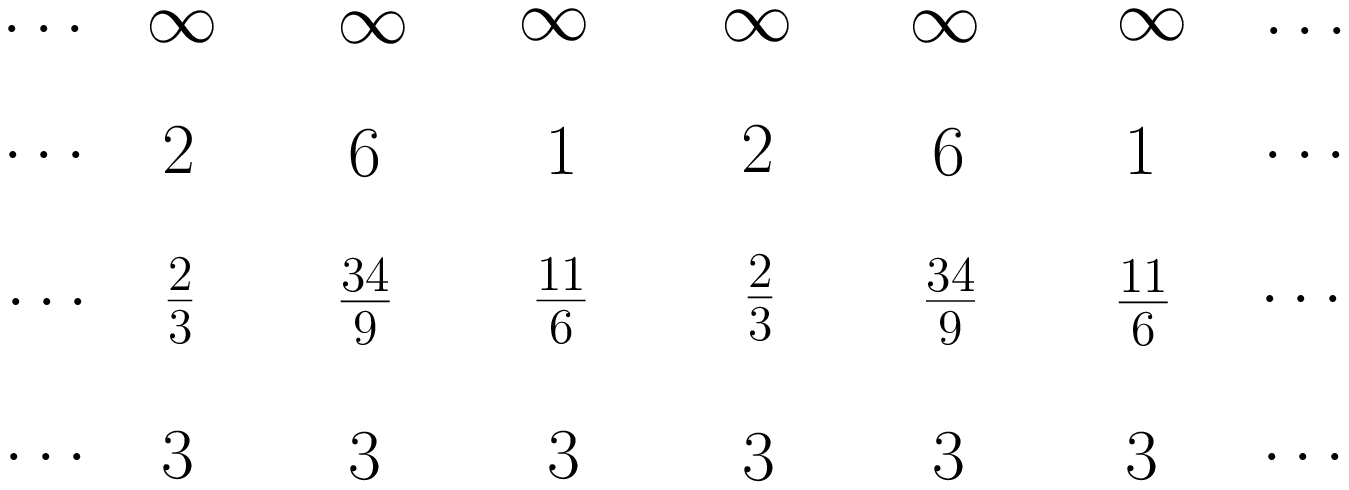} 
\end{center} 
\end{example}
The readers may notice that the rows in the example above are obtained by taking the $1^{st}, 3^{rd}, 5^{th}$ and $7^{th}$ rows from the example displayed at the end of the introduction. This observation will indeed provide a way to prove \ref{T005}, as we shall see near the end of the paper.   \\

\subsection{The mirror pentagram map} 
Now we proceed to discuss the mirror pentagram map in this subsection.  We first define a simple reflection map: 
\begin{definition}[The reflection map]
First we fix the projective line $l_0 \in  \P^2$ where the affine part of $l_0$ is the $x$-axis in the affine plane in $\P^2$. We define the reflection map $r: \P^2 \rightarrow \P^2$ by sending a point $X \in \P^2$ to its mirror image $X'$ about $l_0$. In particular, for points in the affine plane $\R^2$, $r((x, y)) = (x, -y)$.  (In homogeneous coordinates $r ((X, Y, Z)) = r (X, Y, -Z)$). It is clear that $r^2$ is the identity on $\P^2$. 
\end{definition}

We define the mirror pentagram (MP) map as the following: 
\begin{figure}
\begin{center}
\includegraphics[scale=0.35]{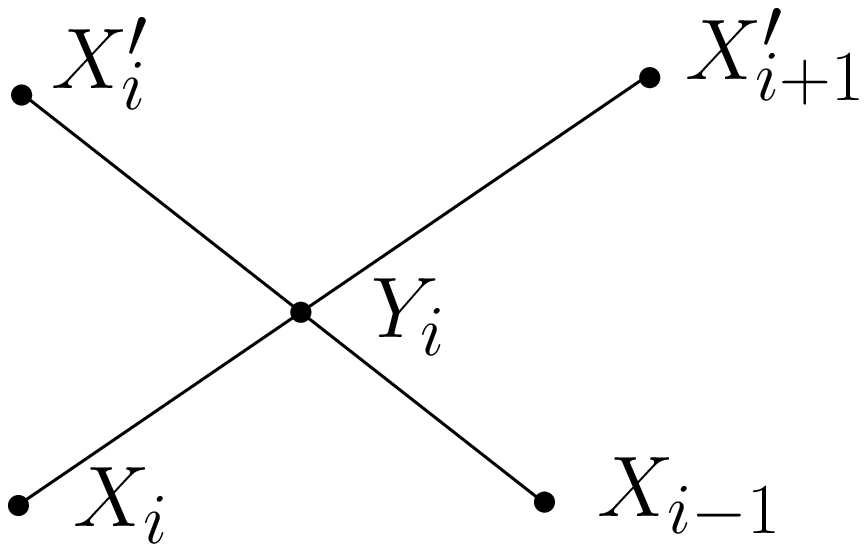}  \quad  \qquad \qquad
\includegraphics[scale=0.35]{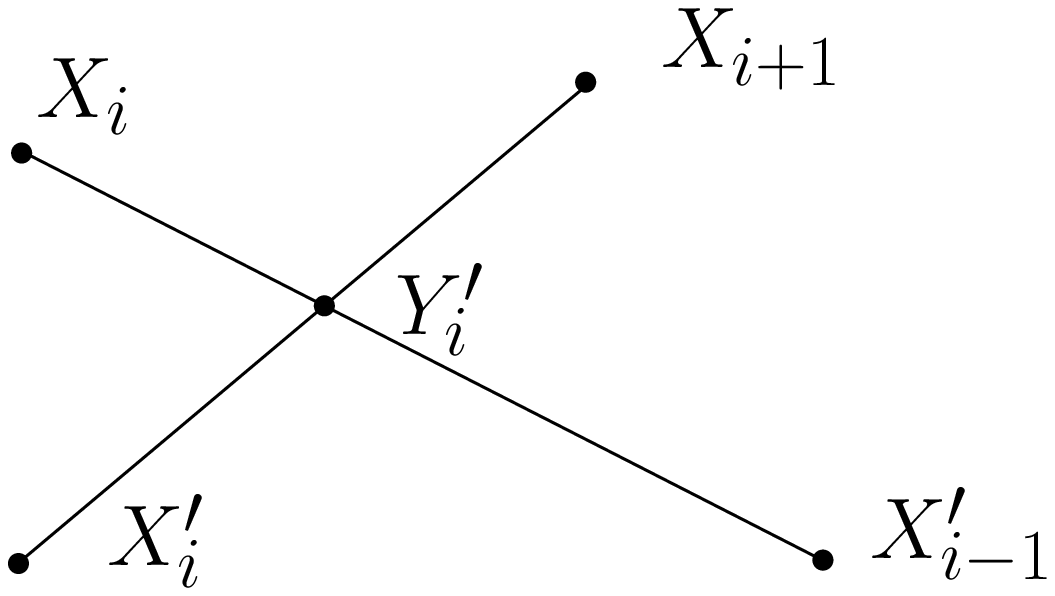}  
\end{center} 
\caption{\label{figure:mirror_pentagram_definition} The Mirror pentagram map.}
\end{figure}

\begin{definition}[The mirror pentagram map]
Consider a linearly ordered sequence of  $n$ points in general position  $P= X_1, X_2, ..., X_n \in \P^2$, let $$r (P) = \left\{r (X_1), r (X_1), ..., r (X_n)\right\}$$ be the linearly ordered sequence of the reflected points (the order being inherited from $A_1$). 
The Mirror pentagram map $MP$ sends $(P, r(P))$ to $(Q, r(Q))$, where $Q = Y_1, Y_2, ..., Y_n \in \R^2 $ is determined by the following rules:  we take four points $X_i, X_i', X_{i-1}$ and $X_{i+1}'$, and define the intersection of  $X_i X_{i+1}'$  and $X_{i-1}X_i'$ to be $Y_i$. In other words, $$Y_i = X_i X_{i+1}' \cap X_{i-1}X_i' .$$ Similarly, we define $$Y_i' = X_i' X_{i+1} \cap X_{i-1}' X_i. $$
\end{definition}

The definition of the pentagram map is illustrated in Figure \ref{figure:mirror_pentagram_definition}.

\begin{example}
The mirror map is best described through an example. In Figure \ref{figure:MP_map_example}, we start with $4$ points $X_1, X_2, X_3, X_4$, first apply the reflection map to get $X_1', X_2', X_3', X_4'$ and then apply the mirror pentagram map to obtain the $Y_i$ and $Y_i'$. \\
\end{example}

\begin{figure}
\begin{center}
\includegraphics[scale=0.6]{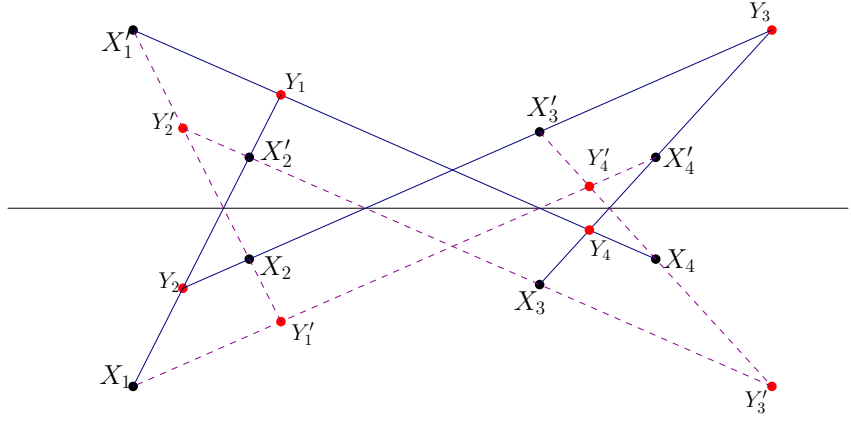} 
\end{center} 
\caption{\label{figure:MP_map_example}An example of the mirror pentagram map.} 
\end{figure}

The mirror pentagram map is well defined, since $$r(Y_i) = r( X_i X_{i+1}' \cap X_{i-1}X_i' ) =  r(X_i) r(X_{i+1}') \cap r(X_{i-1}) r(X_i') = Y_i'. $$

We remark that the mirror pentagram map commutes with the reflection $r$, in other words, for $(P, r(P))$, we have $$MP \circ r (P, r(P)) = MP (r(P), P) = r \circ MP (P, r(P)).$$ 


\begin{lemma}\label{lemma:mirror_pentagram_inverse} The MP map has an inverse: given $(Q, r(Q))$ in general position, we take the intersection of $Y_i Y_{i+1}$ and $Y_{i-1}' Y_{i}'$ to obtain $X_i$, and similarly  $Y_i' Y_{i+1}' \cap Y_{i-1} Y_{i} = X_i'$. 
\end{lemma}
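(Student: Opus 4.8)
\textbf{Proof proposal for Lemma \ref{lemma:mirror_pentagram_inverse}.} The plan is to show that the two geometric constructions are mutually inverse by a direct symmetry/duality argument, not by coordinates. First I would set up notation carefully: starting from $(P, r(P))$ with $P = X_1,\dots,X_n$, the MP map produces $Q = Y_1,\dots,Y_n$ with $Y_i = X_i X_{i+1}' \cap X_{i-1} X_i'$ (and $Y_i' = r(Y_i)$ automatically, by the reflection-equivariance already noted in the text). I would then observe that the proposed inverse construction, applied to $(Q, r(Q))$, builds a point $\widehat X_i := Y_i Y_{i+1} \cap Y_{i-1}' Y_i'$, and the goal is exactly $\widehat X_i = X_i$ for all $i$ (with $\widehat X_i' = r(\widehat X_i) = X_i'$ following for free by equivariance).

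The key step is to identify which lines pass through $X_i$ in the original configuration. From the definition of $Y_i$ and $Y_{i+1}$, both of these points lie on the line $X_i X_{i+1}'$: indeed $Y_i = X_i X_{i+1}' \cap X_{i-1} X_i'$ lies on $X_i X_{i+1}'$ by construction, and $Y_{i+1} = X_{i+1} X_{i+2}' \cap X_i X_{i+1}'$ also lies on $X_i X_{i+1}'$. Hence the line $Y_i Y_{i+1}$ is precisely the line $X_i X_{i+1}'$, which contains $X_i$. Symmetrically — applying the reflection $r$ to the previous observation, or arguing directly — the points $Y_{i-1}'$ and $Y_i'$ both lie on the line $r(X_{i-1} X_i') = X_{i-1}' X_i$, so the line $Y_{i-1}' Y_i'$ equals $X_{i-1}' X_i$, which also contains $X_i$. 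Therefore $\widehat X_i = Y_i Y_{i+1} \cap Y_{i-1}' Y_i'$ lies on both $X_i X_{i+1}'$ and $X_i' X_{i-1}$; generically these two distinct lines meet only at $X_i$, so $\widehat X_i = X_i$. This shows the proposed map is a left inverse of $MP$; running the same argument starting from $(Q, r(Q))$ — or invoking the evident symmetry between the forward and backward constructions (both have the shape "intersect the line through consecutive $Y$'s with the line through consecutive reflected $Y$'s") — gives that it is also a right inverse.

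The only real subtlety, and the step I would be most careful about, is the genericity hypothesis: I must ensure the two lines $X_i X_{i+1}'$ and $X_i' X_{i-1}$ are genuinely distinct (so their intersection is a single point), that all the relevant points $Y_i, Y_i'$ are distinct and in general position so the inverse construction is even defined, and that no degenerate collinearities collapse the argument. Since $(Q, r(Q))$ is assumed in general position in the statement, and the forward map is a rational map, these conditions hold on a Zariski-open dense set, which is all that is needed. I would state this explicitly and note that, as with the pentagram map itself, the MP map and its inverse are birational maps defined on the generic locus. No computation beyond the incidence observations above is required; the lemma is essentially a bookkeeping consequence of the defining incidences together with reflection-equivariance.
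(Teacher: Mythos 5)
Your argument is correct and takes essentially the same route as the paper's proof: both identify the line $Y_i Y_{i+1}$ with $X_i X_{i+1}'$ and the line $Y_{i-1}' Y_i'$ with $X_{i-1}' X_i$ via the defining incidences and reflection-equivariance, and conclude that their intersection is $X_i$. (One typographical slip: in your final incidence sentence you write ``$X_i' X_{i-1}$'' where you mean $X_{i-1}' X_i$, the reflected line correctly identified in the sentence before; the unreflected line $X_{i-1} X_i'$ does not pass through $X_i$.)
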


\begin{example}
The definition of the inverse of the mirror pentagram map is illustrated in Figure \ref{figure:mirror_pentagram_inverse}. 
\end{example}

\begin{proof}
The fact that these maps are inverses to each other is easy to see, we show one direction and leave the other to the reader. Consider $Y_i Y_{i+1} \cap Y_{i-1}' Y_{i}'$, where each $Y_i$ is obtained from the $X_i, X_i'$ under MP, then $Y_i = X_i' X_{i-1} \cap X_{i+1}' X_i$ and $Y_{i+1} =  X_{i+1}' X_i \cap  X_{i+2}' X_{i+1}$, thus the line $Y_i Y_{i+1}$ is same as the line $ X_{i+1}' X_i$ by the assumption that no two lines are degenerate. Similarly, $Y_{i-1}' Y_{i}' = r (Y_{i-1} Y_{i}) = r(X_{i}' X_{i-1}) = X_{i} X_{i-1}'$, so $Y_i Y_{i+1} \cap Y_{i-1}' Y_{i}' = X_{i+1}' X_i \cap  X_{i} X_{i-1}' = X_i$. This proves that $MP^{-1} \circ MP$ is identity. The other direction is identical. 
\end{proof}

\begin{figure}
\begin{center}
\includegraphics[scale=0.35]{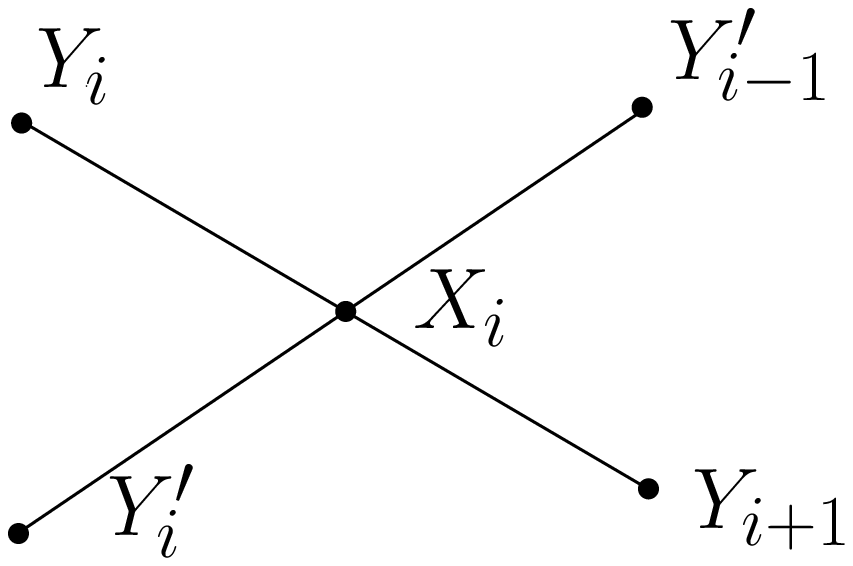}  \quad \qquad \qquad
\includegraphics[scale=0.35]{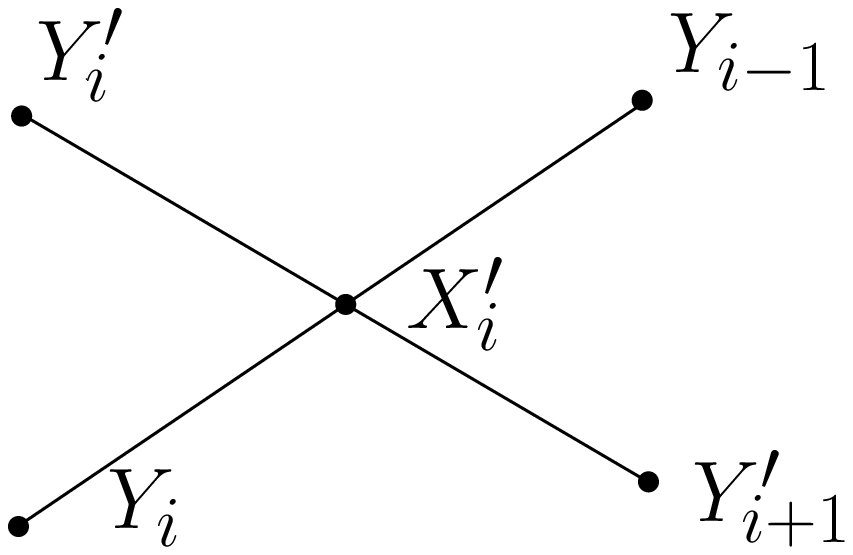} 
\end{center} 
\caption{\label{figure:mirror_pentagram_inverse} The inverse of the Mirror pentagram map.}
\end{figure}

The mirror pentagram map is closely related to the lower pentagram map, in fact, it can be viewed as a geometric definition of the lower pentagram map. Given some $P \subset \P^2$ or equivalently $(P, r(P)) $ in general position, let $(P^{-1}, r(P^{-1})) = MP^{-1}(P, r(P)) $ be the uniquely determined points by the inverse. Let $p: \P^2 \rightarrow \P^1$  be the projection $(X, Y, Z) \mapsto (X, Y)$ in homogeneous coordinates. Take the projection of the two pairs of n-points $p(P^{-1}, P)$, it is clear from definition that $p (P^{-1}, P) = (p(P^{-1}), p(P)) $.  We claim that $$ T_1 [p(P^{-1}), p(P)] =\Big(p(P),  p[\text{MP} (P, r(P))]\Big).$$

This seemingly complicated expression asserts that, starting from $n$-points $P$ in $\R^2$, we can find a corresponding pair $(A, B) $, where $B = p(P), A = p(P^{-1})$, such that applying the lower pentagram map $T_1$ to the pair $(A, B)$ is the same as applying the mirror pentagram map to $(P, r(P))$ and then project down using $p$.  An illustration of the claim is given in Figure \ref{figure:mirror_pentagram_and_lower_pentagram}. 
In fact, this process can be repeated (as long as concerned points are in general position), so we have 

\begin{lemma} \label{lemma:lower_pentagram_commutes_with_mirror_pentagram}
Retain notations from above, for any $k$ iterations: 
$$ T_1^k [p(P^{-1}), p(P)] =\Big(p[\text{MP}^{k-1} (P, r(P))],  p[\text{MP}^k (P, r(P))]\Big).$$
\end{lemma}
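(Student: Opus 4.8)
The plan is to prove the single-step identity
$$T_1\bigl[p(P^{-1}),\,p(P)\bigr]=\Bigl(p(P),\,p[\mathrm{MP}(P,r(P))]\Bigr)$$
(which was stated as a claim just above Lemma~\ref{lemma:lower_pentagram_commutes_with_mirror_pentagram} but not proved in detail) and then bootstrap it to arbitrary $k$ by induction. The base case $k=1$ is exactly this single-step claim; the inductive step follows by applying the single-step claim to the pair $\mathrm{MP}^{k-1}(P,r(P))$ in place of $(P,r(P))$, using that $\mathrm{MP}^{k-1}(P,r(P))$ is again of the form $(\widetilde P, r(\widetilde P))$ (which holds because the MP map commutes with $r$, as remarked after its definition) and that $\mathrm{MP}^{-1}$ applied to it recovers $\mathrm{MP}^{k-2}(P,r(P))$.

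So the real content is the single-step identity, and I would establish it by a direct projective-geometry computation of cross ratios. First I would unwind notation: write $B=p(P)$, $A=p(P^{-1})$, and $Y_i = X_i X_{i+1}'\cap X_{i-1}X_i'$ for the vertices of $Q=\mathrm{MP}(P,r(P))$, so that the target asserts that applying $T_1$ to $(A,B)$ produces the pair $\bigl(B,\,p(Q)\bigr)$. By the definition of $T_1$, I must check that for each $i$ the point $p(Y_i)$ is the unique point $Z_i\in\P^1$ with $[B_i,Z_i,B_{i-1},\ldots]=-1$ in the appropriate six-point cross ratio; equivalently, I must verify the six-point cross ratio relation among the relevant points in $\P^2$ \emph{before} projecting, since $p$ preserves cross ratios. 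The key observation is that the MP construction is literally the pentagram-map picture reflected across $l_0$: the four lines $X_iX_{i+1}'$, $X_{i-1}X_i'$ and their reflections are exactly the $2$-diagonals and their mirrors, and so the Menelaus-type relation
$$[P_i,Q_{i-1},Q_{i-3},R_i,Q_{i+1},Q_{i+3}]=-1$$
discussed in the text translates, under reflection and the identification $P\leftrightarrow X$, $P^{-1}\leftrightarrow X^{-1}$, into precisely the defining six-point relation for $T_1$. Concretely I would: (i) identify, using Lemma~\ref{lemma:mirror_pentagram_inverse}, the points $X_i^{-1}$ as intersections of the appropriate $Y$-lines; (ii) apply Menelaus's theorem to a suitable triangle whose sides carry the six points $X_i^{-1}, X_i, X_{i-1}, Y_i, X_{i+1}, \ldots$ and their reflections, or invoke the cross-ratio version already set up in the text; (iii) project along $p$ and relabel to match the $T_1$ convention $[X_i,Y_i,Y_{i-1},Z_i,Y_i,Y_{i+1}]=-1$.

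The main obstacle I anticipate is bookkeeping rather than conceptual: keeping the cyclic indices, the primed/unprimed (reflected/unreflected) labels, and the convention shifts between the $P$-labels, the $Q$-labels, and the $T_1$-labels all consistent, and making sure the six points fed into Menelaus genuinely lie on the three lines of one triangle so the theorem applies. A secondary point to handle carefully is genericity: the identity, and the induction, only make sense where all the relevant intersections are transverse and no points collapse, so I would phrase everything on the open dense locus where $\mathrm{MP}^k$, $\mathrm{MP}^{-1}$ and $T_1^k$ are all defined, which is exactly the hypothesis under which the statement is asserted. Once the single-step cross-ratio identity is pinned down, the induction is formal and the lemma follows.
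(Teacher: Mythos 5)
Your proposal is correct and follows essentially the same route as the paper: the paper's proof also reduces to the single-step case $k=1$ and disposes of it by Menelaus's theorem as illustrated in Figure \ref{figure:mirror_pentagram_and_lower_pentagram}. Your write-up merely makes explicit the induction (via $\mathrm{MP}$ commuting with $r$ and $\mathrm{MP}^{-1}$ recovering the previous stage) and the cross-ratio bookkeeping that the paper leaves implicit.
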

\begin{proof}
It suffices to prove the lemma for $k=1$, which is a direct consequence of Menelaus Theorem, as shown in Figure \ref{figure:mirror_pentagram_and_lower_pentagram}.
\end{proof}
\vspace*{0.2cm}

\subsection{The point of collapse under the mirror pentagram map}

For the lower pentagram map $T_1$, there is an analogous notion of axis-aligned polygon. We say a pair of $n$-points $(A, B) $ where $A = (A_1, A_2, ..., A_n)$ is axis-aligned if $A_1 =  A_2 = ... = A_n$. Suppose that $B = (B_1, B_2, ..., B_n)$ and that $B_i \ne A_1$ for any $i$, then we can define the center of mass of $\mathscr C (A, B)$ as follows:  first we apply a projective transformation $\varphi$ to $\P^1$ that sends $A_j$ to the point at $\infty$, by assumption $\varphi (B)$ sits in the affine line $\R^1 \subset \P^1$, define $C= \displaystyle \frac{1}{n}  (\varphi (B)_1 + \varphi (B)_2 + ... + \varphi (B)_n)$, and then define $$\mathscr C (A, B) = \varphi^{-1} (C) = \varphi^{-1} \big( \frac{1}{n} (\sum  \varphi(B)_{j}) \big). $$ 

\begin{remark}
Similar to the definition of axis-aligned polygon in Section \ref{sec:introduction}, $\mathscr C (A, B)$ is well defined, in other words, it does not depend on choices of $\varphi$ such that $\varphi (A_j) = \infty$. 
\end{remark}

\begin{figure}
\begin{center}
\includegraphics[scale=1]{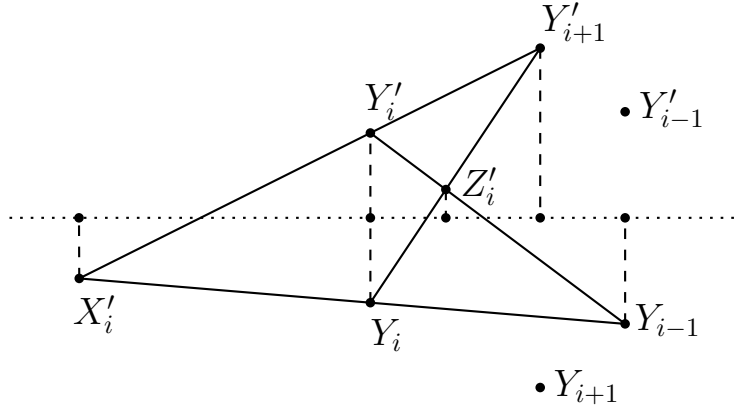} 
\end{center} 
\caption{\label{figure:mirror_pentagram_and_lower_pentagram} The correspondence between the Mirror pentagram map and the lower pentagram map.} 
\end{figure}

\begin{notation}
When $A$ is uniformly $\infty$, we denote $\mathscr C( A, B) = \mathscr C(B)$. 
Note that this corresponds to the case in the Mirror pentagram map where $P \subset \P^2$ lies on a horizontal line $l \ne l_0$, in other words, the affine part of $l$ is parallel but not equal to the $x$-axis, or equivalently, the intersection of $l$ and $l_0$ lies on the line at infinity. We call such a pair $(P, r(P)) $ an axis-aligned pair. For an illustration see Figure \ref{figure:mirror_pentagram_collapse}. 
\end{notation}


The main theorem that we will prove in this section is the following: 

\begin{theorem}{\label{T007}}  Let $(P, r(P))$ be a generic axis-aligned pair where $P$ consists of $n$ points on a line $l$ parallel to $l_0$. Suppose that the $(n-1)$ iteration of the mirror pentagram map $MP^{n-1}$ is well defined on $(P, r(P))$, then $MP^{n-1}$ takes $(P, r(P))$ to two points $(s, r(s)) \in \P^2 \times \P^2$, where $$p(s) =\mathscr C (p (P)).$$  
in other words,  for any such $(P, r(P))$, let $(S, r(S)) = \text{MP}^{n-1} (P, r(P))$, then $S_1 = S_2 = ... = S_n = s$, and $$s = \frac{1}{n} (p(P_1) + p(P_2) + ... + p(P_n)).$$
\end{theorem}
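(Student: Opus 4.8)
\textbf{Proof proposal for Theorem \ref{T007}.}

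The plan is to mirror (no pun intended) the structure of the proof of Theorem \ref{T002}, reducing the mirror pentagram dynamics to a ``mating-type'' process on sequences of $n$-points in $\P^1$ obtained by projecting a controlled lifting into $\R^n$. First I would set up the analogue of the sequences $A_1, A_3, \dots, A_{2n-3}$: starting from the $n$ points of $P$ on the horizontal line $l \neq l_0$ together with their reflections $r(P)$, I would record the intermediate points produced by successive iterations of $MP$ as rows, so that row $i$ is an $n$-point in $\P^2$ and consecutive rows are related by the local rule $Y_i = X_i X_{i+1}' \cap X_{i-1} X_i'$. Projecting via $p\colon \P^2 \to \P^1$ and using Lemma \ref{lemma:lower_pentagram_commutes_with_mirror_pentagram}, this becomes the lower pentagram dynamics on pairs $(p(P^{-1}), p(P))$, and the axis-aligned hypothesis ($P$ on a line parallel to $l_0$) forces the ``$x$-row'' to be uniformly $\infty$ in $\P^1$, which is exactly the collapsing setup.

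Next I would introduce the controlled lifting. Since the relevant points live in $\P^1$ (or $\P^2$ before projection), I would lift the $n$ points of $p(P) \subset \R^1 \subset \P^1$ to $n$ points in $\R^{n-1}$ (or $\R^n$ after adding the original coordinate), choosing a \emph{parallel lifting} so that the points occupying the same position across the different rows are raised to the same height in the extra coordinates. The key point, exactly as in Lemma \ref{lemma:parallel_lifting_of_centroids}, is that this makes the centroids of all the lifted rows coincide at a single point $C \in \R^{n-1}$, whose projection is $\frac{1}{n}\sum_j p(P_j) = \mathscr C(p(P))$. Each lifted row spans a hyperplane; by the analogue of Lemma \ref{lemma:lifting_into_general_positions} a generic such lifting puts these $n-1$ hyperplanes in general position, so their common intersection $H_{n-1,n-1}$ is a line. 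One then checks, via the analogues of Lemma \ref{lemma:intersection_gives_mating_process} and Lemma \ref{lemma:fully_sliced_mating_process} adapted to the cross-ratio $=-1$ rule (which is the projective incarnation of the MP construction), that iterating $MP$ corresponds to intersecting the hyperplanes and projecting down, so after $n-1$ steps all $n$ output points lie on $\pi(H_{n-1,n-1})$. Since $C \in H_{1,i}$ for every $i$ and hence $C \in H_{n-1,n-1}$, the point $\pi(C) = \mathscr C(p(P))$ lies on the collapse locus; as the collapse is to a single point $s$, we get $p(s) = \mathscr C(p(P))$, and a genericity/Zariski-open argument extends this from the constructed example to all generic axis-aligned pairs on which $MP^{n-1}$ is defined.

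The main obstacle I anticipate is verifying that the local mirror-pentagram rule genuinely lifts to the ``intersect hyperplanes spanned by lifted rows'' operation in $\R^{n-1}$, i.e.\ the exact analogue of Schwartz's Lemma \ref{lemma:intersection_gives_mating_process} in this reflected setting. In the usual pentagram case the mating product $z_j = \overline{x_{j-1}x_{j+1}} \cap \overline{y_{j-1}y_{j+1}}$ matches hyperplane intersection cleanly; here the rule involves the reflected points $X_i'$, so one must check that the reflection $r$ interacts correctly with the lifting and the projection $p$ — concretely, that lifting $P$ (rather than $P$ and $r(P)$ independently) is consistent, using that $r$ commutes with $MP$ and that $p \circ r = p$ up to the sign that does not affect the affine coordinate in play. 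A secondary technical point is the bookkeeping of cyclic indices and parities when translating ``$2n$ points $(X, r(X))$'' into ``$n$-point rows,'' since the mirror construction effectively folds a $2n$-gon onto an $n$-gon; I would handle this exactly as in Lemma \ref{lemma:mating_gives_pentagram_map} by carefully tracking which sub-collection of the iterated rows realizes $MP^{i-1}(P, r(P))$. Once these two points are in place, the centroid-tracking and genericity arguments are formally identical to those in Section \ref{sec:pentagram}.
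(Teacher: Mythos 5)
Your overall skeleton --- parallel lifting of $n-1$ rows into $\R^n$, coincident centroids, hyperplanes in general position, the intersection line $H_{n-1,n-1}$ containing the common centroid, then projecting down --- is exactly the paper's strategy, and your final centroid-tracking step is right. But there is a genuine gap in how you set up the rows. You propose to ``record the intermediate points produced by successive iterations of $MP$ as rows.'' That cannot work with a parallel lifting: the later iterates are determined by the dynamics, so you have no freedom to lift them to prescribed heights, and the coincidence of centroids would become a claim about the dynamics rather than a consequence of the lifting. The correct construction (which is what makes the whole machine run) is to build all $n-1$ rows from the \emph{initial} data only, as cyclic shifts of the interleaved sequence $X_1, X_2', X_3, X_4', \dots$; with this interleaving the mirror-pentagram rule $Y_i = X_iX_{i+1}'\cap X_{i-1}X_i'$ becomes literally the mating product $z_j = \overline{x_{j-1}x_{j+1}}\cap\overline{y_{j-1}y_{j+1}}$ of two consecutive rows, so Schwartz's Lemmas \ref{lemma:intersection_gives_mating_process} and \ref{lemma:fully_sliced_mating_process} apply verbatim and your anticipated obstacle about the reflection interacting with the lifting evaporates: each row simply contains both unprimed and primed points as ordinary points of $\R^2\subset\R^n$, and the $r$-symmetry is only invoked at the very end to produce the second line through the other half of the points.

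Two further issues. First, the parity bookkeeping you call a ``secondary technical point'' is in fact a substantive case split: for odd $n$ the ordinary mating of consecutive rows degenerates (one product is $\overline{X_{n}X_1}\cap\overline{X_1X_2}$, undefined for an axis-aligned pair), and one must introduce a truncated ``star-mating'' that drops a point at each step, run it on all $n$ cyclic shifts of the family of rows, and reassemble the $n$ lines at the end; this does not come for free from the even case. Second, your route of projecting to $\P^1$ first and lifting $p(P)\subset\R^1$ into $\R^{n-1}$ inverts the paper's logical order (the $\P^1$ statement, Theorem \ref{T008}, is \emph{deduced from} Theorem \ref{T007}, not the other way around) and loses the second affine coordinate, which you need: each row mixes points at heights $\pm1$, so the common lifted centroid projects to $(\mathscr C(p(P)),0)$ for $n$ even but to $(\mathscr C(p(P)),-1/n)$ for $n$ odd, and identifying the collapse point $s$ as the intersection of the final lines requires knowing this common point in $\P^2$, not merely its image under $p$.
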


An example of theorem when $n = 3$ is again given by Figure \ref{figure:mirror_pentagram_collapse}. 

\begin{remark}
The theorem asserts that the $(n-1)$ iteration of the mirror pentagram map takes $(P, r(P))$ to two points $s, r(s)$ that are mirror images of each other. We will see in the proof that when $n$ is even, $s = r(s)$, in other words, $s= (\mathscr C (P), 0)$. While for the case when $n = 2k-1$ is odd, $s =(\mathscr C (P), -\frac{1}{n}) $. 
\end{remark}

Similar to Section \ref{sec:pentagram}, we construct liftings of certain polygons into $\R^n$ and show that they intersect appropriately.  For different parities of $n$, we need to construct slightly different liftings. In particular, our next lemma needs to be treated differently depending on whether $n$ is even or odd. 
\begin{figure}
\begin{center}
\includegraphics[scale=0.8]{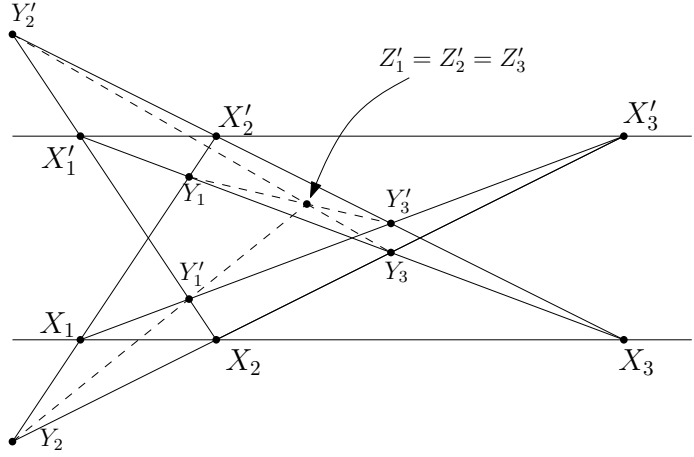} 
\end{center} 
\caption{\label{figure:mirror_pentagram_collapse} Point of collapse of axis-aligned under the mirror pentagram map.} 
\end{figure}

\subsubsection{$n$ is even.} \label{subsubsection:even}
First we assume that $n = 2k$ is even, the case when $n = 2k+1$ involves slightly different definitions, which we treat later in the section. \\

We identify $\R^2$ with the copy of affine plane in $\R^n$ where $x_3, ..., x_n$ coordinates are $0$ as in Section \ref{sec:pentagram}. Consider an axis-aligned pair $(P, r(P))$ where $P = \left\{ X_1, X_2, ..., X_{2k} \right\}$, and consider the following $(n-1)$ linearly ordered sequences of $n$ points: 

\begin{align*} A_1 & = \left\{X_1(1),\quad  X_2'(1), \quad  X_3(1), \quad..., \quad X_{2k-1}(1), \quad X_{2k}'(1)\right\}   \\ A_3 & = \left\{X_2(3),\quad  X_3'(3), \quad  X_4(3), \quad..., \quad X_{2k}(3), \quad X_{1}'(3)\right\}   \\ A_5 & = \left\{X_3(5),\quad  X_4'(5), \quad  X_5(5), \quad..., \quad X_{1}(5), \quad X_{2}'(5)\right\}  \\  & \ldots \\ A_{2n-3} &= \left\{X_{n-1}(2n-3),\quad  X_n'(2n-3), \quad ..., \quad X_{n-3}(2n-3), \quad X_{n-2}'(2n-3)\right\}  \end{align*} \\ again we think of $X_i (j), X_i (j+4), ...$ as different points that coincide in $\R^2$.  Without loss of generality, we may assume that the line $l \in \R^2$ where points in $P$ lie on is the line $y = -1$.  Then  all points in $A_j$ have coordinates $(x_1, \pm 1, 0, ..., 0)$ for some $x_1 \in \R$. 

\begin{notation}
We denote $A_i' = r (A_i)$ which consists of the mirror image of each point in $A_i$.  Let $W_1 = \left\{A_1, A_3, ..., A_{2n-3}\right\}$ and $W_1 ' = \left\{A_1', A_3', ..., A_{2n-3}'\right\} $.
\end{notation}


\begin{lemma}\label{lemma:mirror_pentagram_mating_n_even}

Let $n = 2k$ be an even integer with $k \ge 2$. Let $P$ be defined as in Theorem \ref{T007} and $W_1$ be defined as above, then for $j \in [2, n-1]$, the map $MP^{j-1}$ is defined on $(P, r(P))$ if and only if the mating process $$W_1 \rightarrow W_2 \rightarrow ... \rightarrow W_j$$ is defined. Suppose that $MP^{i-1}$ is well defined where $ i \le n-2$, then the union of all sets in $W_i$ gives the points in $MP^{i-1} (P, r(P)).$ If $MP^{n-2}$ is well defined, and suppose that $MP^{n-2}(P, r(P)) = (Q, r(Q))$ where $Q = (Q_1, Q_2, ..., Q_{2k})$, then $$W_{n-1} = (Q_{2k-1}, Q_{2k}', Q_1, Q_2' , ..., Q_{2k-3}, Q_{2k-2}').$$
 \end{lemma}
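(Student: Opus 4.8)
\textbf{Proof proposal for Lemma \ref{lemma:mirror_pentagram_mating_n_even}.}

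The plan is to reduce the statement entirely to the combinatorics of how the mirror pentagram map acts on indices, and then invoke the already-established dictionary between the $MP$ map and the mating process $*$ on sequences of $n$-points. First I would recall that for two sequences of $n$-points $X = (x_1, x_3, \dots, x_{2n-1})$ and $Y = (y_1, y_3, \dots, y_{2n-1})$, the mating product $Z = X * Y$ is defined by $z_j = \overline{x_{j-1}x_{j+1}} \cap \overline{y_{j-1}y_{j+1}}$. The key observation is that the defining rule of the $MP$ map, $Y_i = X_i X_{i+1}' \cap X_{i-1} X_i'$, is \emph{exactly} a mating product once one interleaves a point with its reflection: if one sets $B = (X_1, X_2', X_3, X_4', \dots)$ (the original points in odd slots, reflected points in even slots) and $B' = r(B) = (X_1', X_2, X_3', X_4, \dots)$, then $\overline{X_{i-1} X_i'}$ and $\overline{X_i X_{i+1}'}$ are precisely the consecutive "edges" of $B$, and $Y_i = X_{i-1}X_i' \cap X_i X_{i+1}'$ unwinds to the $*$-formula on the interleaved sequence. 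Thus the $A_j$'s, which are exactly these interleaved sequences with the appropriate cyclic shift of starting index, are the right objects: $W_1 = (A_1, A_3, \dots, A_{2n-3})$, with each consecutive pair $A_{2k-1}, A_{2k+1}$ being (up to the reflection pattern) a corrugated/matable pair, and $W_1 \to W_2$ should literally recover the interleaved-with-reflections version of $MP(P, r(P))$.

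The steps, in order, would be: (1) verify the index bookkeeping — check that $A_j(l) = X_{?}(j)$ or $X_?'(j)$ with the indices stated, and confirm that $A_j(l) A_{j+2}(l)$ are the lines whose intersections produce the $MP$ output, so that one step of mating on $W_1$ equals one step of $MP$ interleaved with $r$; this is the analog of Lemma \ref{lemma:mating_gives_pentagram_map} / Lemma \ref{lemma:general_mating_gives_pentagram_map} and should be proved the same way, "by carefully keeping track of relevant indices." (2) Deduce the "if and only if" for well-definedness: the mating product $X*Y$ fails to be defined exactly when one of the intersections $\overline{x_{j-1}x_{j+1}} \cap \overline{y_{j-1}y_{j+1}}$ degenerates, and by step (1) these are in bijection with the intersections defining $MP$, so $MP^{j-1}$ being defined on $(P, r(P))$ matches $W_1 \to \cdots \to W_j$ being defined. (3) Identify the intermediate terms: the union of the sets in $W_i$ is the full point set of $MP^{i-1}(P, r(P))$ because $W_i$ is just that point set recorded as an interleaved-with-reflection sequence (for $i \le n-2$ there is enough room that no collapsing has happened). (4) Identify the final term $W_{n-1}$: at the last step there is exactly one $n$-point left; tracing through the cyclic shifts with $n = 2k$ shows the starting index lands so that $W_{n-1} = (Q_{2k-1}, Q_{2k}', Q_1, Q_2', \dots, Q_{2k-3}, Q_{2k-2}')$, where $Q = MP^{n-2}(P,r(P))$ — this is a direct if slightly tedious index computation, and the evenness of $n$ is what makes the reflection pattern on $W_{n-1}$ land consistently (odd slots unreflected, even slots reflected).

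The main obstacle will be step (4): getting the starting index and the reflection pattern of $W_{n-1}$ exactly right. The mating process shrinks the index window by $1$ (in the doubled indexing) at each step, so after $n-2$ steps one must track precisely where the surviving $n$-point's first entry sits relative to the original labeling of $Q$, and simultaneously track which entries carry a prime (reflection) — the two interleave and the bookkeeping is error-prone. I would handle this by setting up the doubled-index convention once at the start (as in Section \ref{sec:pentagram}, writing $A_k(j) = X_{?}(k)$ explicitly) and then arguing by a clean induction on the mating step, showing that $W_i = (Q^{(i)}_{?}, (Q^{(i)}_{?})', \dots)$ with an explicitly described shift, so that the $i = n-1$ case reads off immediately. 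The rest — the "if and only if" and the description of the union of $W_i$ — follows formally from the $MP$-versus-mating dictionary and mirrors the earlier lemmas closely, so no genuinely new idea is needed there.
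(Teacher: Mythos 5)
Your proposal is correct and follows essentially the same route as the paper: the paper's proof likewise unfolds the definitions to identify one step of mating on consecutive $A_j$'s with one step of $MP$ on the interleaved/reflected sequences, uses the commutation of $MP$ with $r$ to get the equivalence of well-definedness across all the products $A_{2s-1}*A_{2s+1}$, iterates, and handles the final identification of $W_{n-1}$ by index bookkeeping. The paper's version is terser (it does not spell out the shift-tracking induction you outline for step (4)), but the underlying argument is the same.
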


\begin{proof}
This is simply unfolding the definitions. Assume that $W_1 \rightarrow W_2$ is well defined, where $W_2 = (B_2, B_4, ..., B_{2n-4})$. Then $B_2 = A_1 * A_3 = \left\{Y_2(2), Y_3'(2), ..., Y_1'(2) \right\}$ from the definition of the mirror pentagram map, hence $MP$ is well defined on $(P, r(P))$ if and only if $A_1 * A_3$ is well defined, if and only if $A_{2s-1} * A_{2s+1}$ is well defined for all relevant indices $s$, since the pentagram map commutes with the reflection map $r$. Repeat the argument $j-1$ times and we get the first part of the lemma.  The second part of the lemma is trivial from the construction of $W_1$, and the third  part of the lemma comes from carefully keeping track of indices of each step of the mating process. 
\end{proof}

\subsubsection{$n$ is odd.}\label{subsubsection:odd}  Now we assume that $n = 2k-1$ is odd for $k \ge 2$. Let $P$ be defined as in Theorem \ref{T007} as usual. In this case we define

\begin{align*} A_1 & = \left\{X_1(1),\quad  X_2'(1), \quad  X_3(1), \quad..., \quad X_{2k-2}'(1), \quad X_{2k-1}(1)\right\}   \\ A_3 & = \left\{X_2(3),\quad  X_3'(3), \quad  X_4(3), \quad..., \quad X_{2k-1}'(3), \quad X_{1}(3)\right\}   \\ A_5 & = \left\{X_3(5),\quad  X_4'(5), \quad  X_5(5), \quad..., \quad X_{1}'(5), \qquad X_{2}(5)\right\}  \\  & \ldots \\ A_{2n-3} &= \left\{X_{n-1}(2n-3),\quad  X_n'(2n-3), \quad ..., \quad X_{n-3}'(2n-3), \quad X_{n-2}(2n-3)\right\} \\  A_{2n-1} &= \left\{X_{n}(2n-1),\quad  X_1'(2n-1), \quad ..., \quad X_{n-2}'(2n-1), \quad X_{n-1}(2n-1)\right\}  \end{align*}

Note that since the $n^{th}$ element and the $1^{st}$ element are both in $P$, when we take a successive pair, for example $A_{1}, A_3$, the mating process will produce an element $ \overline{X_{2k-1} X_1} \cap \overline{X_1 X_2}$ which is not defined, since $(P, r(P))$ is an axis-aligned pair.  Hence we need to redefine the mating process for the case when $n$ is odd. 

\begin{definition}[The star operation]
Let $ \alpha = (x_1, x_3, ..., x_{2n-1})$ and $\beta = (y_1, y_2, ...,$ $y_{2n-1})$ be linearly ordered sequences of $n$ points, we define the star operation on $\alpha$ and $\beta$ by constructing $\gamma = \alpha \star \beta$, where $\gamma = (z_2, z_4, ..., z_{2n-2})$ is a linearly ordered sequence of $(n-1)$ points defined by 
$$z_j = \overline{x_{j-1} x_{j+1}} \cap \overline{y_{j-1} y_{j+1}}.$$ 
\end{definition}

\begin{remark} Note that the difference between $\alpha * \beta$ and $\alpha \star \beta$ is simply that $\alpha \star \beta$ drops the last point from $\alpha * \beta$, hence contains one less point than $\alpha$ or $\beta$. 
\end{remark}

\begin{definition}[The star-mating process]
Suppose that each $\alpha_{1,1}, \alpha_{1,3}, ..., \alpha_{1, 2m-1}$ is a linearly ordered $n$ point sequence.  Let $S_1 = (\alpha_{1,1}, \alpha_{1,3}, ..., \alpha_{1, 2m-1})$, then we say $S_1 \leadsto S_2 $ if each star-mating $ \alpha_{1, 2i-1} \star \alpha_{1, 2i+1}$ is well defined and  $$S_2 = (\alpha_{2,2}, \alpha_{2,4}, ..., \alpha_{2, 2m-2}) \quad \text{where }  \alpha_{2, 2i} = \alpha_{1, 2i-1} \star \alpha_{1, 2i+1}.$$ 
The process $S_1 \leadsto S_2$ is called the star-mating process.
\end{definition}

\begin{remark}
For $S_1 \leadsto S_2 \leadsto ... \leadsto S_m$ to be well defined, we need to require $m \le n$. Note that if $S_1$ contains $n$ points, then $S_i$ contains $n-i+1$ points. 
\end{remark}
Now let \begin{align*} 
W_1(1) & =   \left\{A_1, A_3, ..., A_{2n-3}\right\} ;\\ 
W_1(2) & =   \left\{A_3, A_5, ..., A_{2n-1}\right\} ; \\
W_1(3) & =   \left\{A_5, A_7, ..., A_{1}\right\} ;\\ 
\ldots \\
W_1 (n) & =  \left\{A_{2n-1}, A_1, ..., A_{2n-5}\right\} . \end{align*} Each $W_1(j)$ is a linearly ordered sequence of $n$ points. 

\begin{lemma}
\label{lemma:mirror_pentagram_mating_n_odd}

Let $n = 2k-1$ with $k \ge 2$. Let $P$ be defined as in Theorem \ref{T007} and $W_1(l)$ be defined as above for $l = 1, ..., n$, then for $j \in [2, n-1]$, the map $MP^{j-1}$ is defined on $(P, r(P))$ if and only if the star-mating process $$W_1(l) \leadsto W_2(l)  \leadsto ...  \leadsto W_j(l)$$ is defined for all $l \in [1, n]$.  Suppose that $MP^{i-1}$ is well defined where $ i \ge n-1$, then the union of all sets in $W_i$ gives the points in $MP^{i-1} (P, r(P)).$ In particular, if $MP^{n-2}$ is well defined, and suppose that $MP^{n-2}(P, r(P)) = (Q, r(Q))$ where $Q = (Q_1, Q_2, ..., Q_{2k})$, then $$W_{n-1}(1) = (Q_{n-1}, Q_{n}'); \quad W_{n-1}(2) = (Q_{n}, Q_{1}'), \quad ... , \quad W_{n-1}(n) = (Q_{n-2}, Q_{n-1}')$$
 \end{lemma}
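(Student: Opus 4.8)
# Proof proposal for Lemma \ref{lemma:mirror_pentagram_mating_n_odd}

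The plan is to mimic the proof of the even case (Lemma \ref{lemma:mirror_pentagram_mating_n_even}), with the star-mating process $\leadsto$ replacing the ordinary mating process $\rightarrow$ everywhere, so that the ``forbidden'' offspring coming from the two copies of points in $P$ (the $n^{th}$ and $1^{st}$ entries, which would force an intersection of the form $\overline{X_{2k-1}X_1}\cap\overline{X_1X_2}$) is simply never formed. First I would unfold the definition of the star-mating step $W_1(l)\leadsto W_2(l)$: writing $W_2(l) = (\beta_{2},\beta_{4},\ldots)$ with $\beta_{2i}=\alpha_{1,2i-1}\star\alpha_{1,2i+1}$, each $\beta_{2i}$ is exactly the sequence of intersection points $\overline{x_{j-1}x_{j+1}}\cap\overline{y_{j-1}y_{j+1}}$ appearing in one application of the mirror pentagram map to the relevant four-tuples of the $X_i$ and $X_i'$; the only difference from $*$ is that the last coordinate is dropped, and that dropped coordinate is precisely the undefined one. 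Since $MP$ commutes with the reflection $r$, well-definedness of a single star-mating step for every $l$ is equivalent to well-definedness of $MP$ on $(P,r(P))$ — this is the content of the ``if and only if'' for one step.

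Next I would run the induction on $j$. Assuming the equivalence holds up to step $j-1$ and that $MP^{j-2}(P,r(P))$ is defined with vertices recorded by the $W_{j-1}(l)$, one more application of $MP$ amounts, entry by entry, to one more star-mating step applied to each $W_{j-1}(l)$; since $W_{j-1}(l)$ has $n-j+2$ points and $W_j(l)$ has $n-j+1$, the bookkeeping of how many points survive matches the remark following the definition of the star-mating process (valid as long as $j\le n$, which holds since $j\le n-1$). The ``union of all sets in $W_i$ gives the points in $MP^{i-1}(P,r(P))$'' claim then follows by noting that, as $l$ ranges over $1,\ldots,n$, the starting sequences $W_1(l)$ are cyclic shifts of the same cyclic arrangement of the $A_{2s-1}$, so their mated descendants collectively cover all the vertices produced by $MP^{i-1}$, each vertex appearing in the appropriate shifted sequences.

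For the final, most delicate assertion — the explicit identification $W_{n-1}(1)=(Q_{n-1},Q_n')$, $W_{n-1}(2)=(Q_n,Q_1')$, and so on — I would track indices through all $n-2$ star-mating steps. Here the index of the offspring vertex is the average of the indices of its four parents, exactly as in the labelling conventions for $MP$ and as in Lemma \ref{lemma:mating_gives_pentagram_map}; after $n-2$ steps each $W_{n-1}(l)$ has shrunk to two points, and a direct (if tedious) computation of how the initial labels $1,\ldots,n$ propagate shows these two points are the consecutive pair $Q_{n-2+l},Q_{n-1+l}'$ (indices mod $2k$) in $MP^{n-2}(P,r(P))=(Q,r(Q))$. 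I expect this index chase to be the main obstacle: the odd parity means the two ``$P$-type'' slots at the ends of each $A_{2s-1}$ behave asymmetrically under $\star$ (one is kept, one is dropped at each stage), so one must be careful that the surviving labels shift consistently; the cleanest route is probably to verify the pattern for the base case $n=3$ (where $n-2=1$ and the claim is immediate from the definition of $\star$) and then argue inductively that each $\leadsto$ step shifts the recorded index pattern in the predicted way, using that $r$ exchanges primed and unprimed labels without changing the numerical index.
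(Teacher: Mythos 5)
Your proposal is correct and follows essentially the same route as the paper: the paper's own proof is a one-line reduction to the even case (unfolding definitions, using that $MP$ commutes with $r$, and "carefully keeping track of indices"), and your write-up is simply a more detailed version of that same argument, correctly identifying that the coordinate dropped by $\star$ is exactly the degenerate intersection $\overline{X_{2k-1}X_1}\cap\overline{X_1X_2}$. The extra care you devote to the final index chase (base case $n=3$ plus an inductive shift argument) supplies detail the paper omits, but it is the same approach, not a different one.
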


\begin{proof} The proof is again keeping track of definitions and relevant indices and analogous to the proof of Lemma \ref{lemma:mirror_pentagram_mating_n_even}, where the only difference is that now we consider a sequence of star-mating processes instead of one mating process.
\end{proof}

As in Section \ref{sec:pentagram}, we define a \emph{lifting} of the sequence $A_1, A_3, ..., A_{2n-3}$ to be a way to lift each $A_i$ into a joint in $\R^n$ while fixing the $x_1, x_2$ -coordinates for every point. Analogously, a \emph{parallel lifting} $L$ of the sequence of $n$-points $A_1, A_3, ..., A_{2n-3}$ is a family of liftings such that $L$ sends $A_1, ..., A_{2n-3}$ to $\widetilde A_1, ..., \widetilde A_{2n-3}$, where the $l^{th}$ point of each $\widetilde A_{i}$ has the same $x_3, x_4, ..., x_n$ coordinates, for all $l =1, 2, ..., n$. 

\begin{remark} By Lemma \ref{lemma:parallel_lifting_into_joint} we know that a parallel lifting lifts the sequence of $n$ points defined above $A_1, A_3, ..., A_{2n-3}$ to a polyjoint $(\widetilde A_1, \widetilde A_3, ..., \widetilde A_{2n-3} ).$
\end{remark}


\begin{lemma}{\label{lemma:mirror_pentagram_lifting_of_centroids}} Suppose that a polyjoint $(\widetilde A_1, \widetilde A_3, ..., \widetilde A_{2n-3} )$ is lifted from $A_1, A_3..., $ $A_{2n-3}$ by a parallel lifting. Let $ C_i \in \R^n$ be the centroid of the $n$-vertices in $\widetilde A_i$ for each $i = 1, 3, ..., 2n -3$.  Then
\begin{equation*}  C_1 = C_3 = ... = C_{2n-3};  
\end{equation*} and 
\begin{equation*}
\begin{cases}
 \pi (C_1) = ... = \pi (C_{2n-3}) = (\mathscr C (P), 0)  & \text{ if  $n$ is even}; \\ 
  \pi (C_1) = ... = \pi (C_{2n-3}) = (\mathscr C (P), \frac{-1}{n})   & \text{ if  $n$ is odd}.
 \end{cases}
 \end{equation*}
\end{lemma}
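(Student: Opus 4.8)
The plan is to reduce the statement to an elementary computation of planar centroids, along the lines of the proof of Lemma~\ref{lemma:parallel_lifting_of_centroids}. The first observation is that $\pi\colon\R^n\to\R^2$ is linear and restricts to the identity on the $x_1,x_2$-coordinates, so $\pi(C_i)$ is nothing but the centroid of the $n$ planar points $A_i$ (viewed in $\R^2$). Hence the assertion about the $\pi(C_i)$ is really a statement about the sequences $A_1,A_3,\dots,A_{2n-3}$ themselves and makes no reference to the lifting at all.

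Next I would compute the centroid of each $A_i$ directly from its explicit definition in \S\ref{subsubsection:even} (if $n$ is even) or \S\ref{subsubsection:odd} (if $n$ is odd). Recall that all of $X_1,\dots,X_n$ lie on $y=-1$, so $X_j=(x_j,-1)$ and $X_j'=r(X_j)=(x_j,1)$. The key combinatorial point is that, whichever $i$ we pick, every index $j\in\{1,\dots,n\}$ occurs exactly once among the subscripts of the points listed in $A_i$ — as either $X_j$ or $X_j'$. Since $r$ preserves the first coordinate, the $x_1$-coordinates of the $n$ points of $A_i$ sum to $\sum_{j=1}^{n} x_j$, and therefore the first coordinate of $\pi(C_i)$ equals $\frac1n\sum_j x_j=\mathscr C(P)$, independently of $i$. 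For the second coordinate I would simply count primed versus unprimed entries: each list $A_i$ alternates unprimed, primed, unprimed, $\dots$, so when $n=2k$ it contains exactly $k$ points with $y=-1$ and $k$ with $y=1$, giving second coordinate $0$; when $n=2k-1$ the same alternation produces $k$ unprimed and $k-1$ primed entries, giving second coordinate $\frac{-k+(k-1)}{2k-1}=-\frac1n$. This establishes the claimed value of $\pi(C_i)$ in each parity, and in particular that $\pi(C_1)=\cdots=\pi(C_{2n-3})$.

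It remains to upgrade the equality of the projections to the equality $C_1=\cdots=C_{2n-3}$ in $\R^n$. For this I would invoke the definition of a parallel lifting exactly as in Lemma~\ref{lemma:parallel_lifting_of_centroids}: the $l$-th point of each $\widetilde A_i$ carries the same $x_3,\dots,x_n$-coordinates for all $i$, so after averaging over $l=1,\dots,n$ the last $n-2$ coordinates of $C_i$ are also independent of $i$. Combined with the agreement of the first two coordinates obtained above, this gives $C_1=C_3=\cdots=C_{2n-3}$.

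The only genuine difficulty here is bookkeeping: one must verify that the primed/unprimed pattern and the cyclic shift of subscripts appearing in the explicit lists defining $A_1,A_3,\dots,A_{2n-3}$ are uniform across $i$, so that the parity count of the second paragraph applies to every $A_i$ on the nose. Once this is checked, the two cases $n$ even and $n$ odd differ only in that count, and everything else is formal.
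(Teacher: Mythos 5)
Your proposal is correct and follows essentially the same route as the paper: the paper likewise reduces the first assertion to the argument of Lemma~\ref{lemma:parallel_lifting_of_centroids} (equal heights from the parallel lifting, equal planar centroids from the projection) and proves the second assertion by the same count of primed versus unprimed entries, giving $k$ points at $x_2=-1$ and $k$ at $x_2=1$ when $n=2k$, and $k$ versus $k-1$ when $n=2k-1$. Your write-up just makes explicit the bookkeeping (each subscript occurring exactly once, $r$ preserving the first coordinate) that the paper leaves implicit.
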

\begin{proof}
The proof of first part is analogous to the proof of Lemma \ref{lemma:parallel_lifting_of_centroids}. For the second assertion, we note that for each $A_i$, when $n = 2k$, there are $k$ points in $A_i$ that lie on $l$ with $x_2$-coordinates $-1$ and $k$ points on $r(l)$ with $y$-coordinates $1$; while when $n = 2k-1$, there are $k$ points on $l$ and $k-1$ points on $r(l)$, so the $x_2$-coordinate of the average of all vertices is $-1/n$.
\end{proof}


\begin{lemma}{\label{lemma:mirror_pentagram_lifting_into_general_positions}} There exists an axis-aligned polygon with a parallel lifting of $A_1, A_3 ..., $ $A_{2n-3}$ such that all subspaces $|\widetilde A_1|, |\widetilde A_3|, ..., |\widetilde A_{2n-3}|$ are hyperplanes in general position. 
\end{lemma}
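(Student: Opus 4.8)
The plan is to follow the proof of Lemma \ref{lemma:lifting_into_general_positions} almost verbatim, the only genuinely new feature being that the second coordinates of all points involved are now pinned to $\pm 1$ rather than free. First I would fix the line $l$ to be $y=-1$, so that $P$ consists of points $X_j=(a_j,-1)$ with reflections $X_j'=(a_j,1)$, and the real numbers $a_1,\dots,a_n$ are the only free parameters. As in Section \ref{sec:pentagram} I embed $\R^2\hookrightarrow\R^n$ by appending zeros in the $x_3,\dots,x_n$ slots and use the explicit parallel lifting $L_0$: for each $A_k$ keep the first two points at height $0$, and send the $j$-th point (for $3\le j\le n$) to its translate by $e_j$. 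Since the $l$-th point of every $\widetilde A_k$ is raised to the same height, $L_0$ is a legitimate parallel lifting, and by the remark following Lemma \ref{lemma:parallel_lifting_into_joint} it produces a polyjoint.

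Next I would compute, for each odd $k=2l-1$ with $1\le l\le n-1$, the normal vector
$$v_k=\det\bigl(e,\ \widetilde A_k(2)-\widetilde A_k(1),\ \dots,\ \widetilde A_k(n)-\widetilde A_k(1)\bigr)^{T},\qquad e=(e_1,\dots,e_n).$$
Because, after $L_0$, columns $3,\dots,n$ of this matrix consist of a single subdiagonal of ones, the determinant collapses exactly as in Lemma \ref{lemma:lifting_into_general_positions} to an expression in the $2\times2$ minors of the first two coordinates of the difference vectors. Here the first element of every $A_k$ lies on $y=-1$ and the entries alternate between $y=-1$ and $y=1$, so the second coordinate of $\widetilde A_k(j)-\widetilde A_k(1)$ is $0$ for $j$ odd and $2$ for $j$ even; writing $\alpha_j^{(2l-1)}=a_{l+j-1}$ (indices cyclic) for the first coordinates, one finds that $v_{2l-1}$ has first two components $(2,\ a_l-a_{l+1})$ and further components alternating between $-2(a_{l+j-1}-a_l)$ for $j$ odd and $2(a_{l+1}-a_{l+j-1})$ for $j$ even. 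The even/odd bookkeeping of Sections \ref{subsubsection:even} and \ref{subsubsection:odd} only changes the precise index windows, not the shape of these formulas.

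Finally I would reduce the lemma to the claim that, for some choice of $a_1,\dots,a_n$, the $(n-1)\times n$ matrix $[\,v_1;v_3;\dots;v_{2n-3}\,]$ has full rank $n-1$; as this is a Zariski-open condition on $(a_1,\dots,a_n)$, it suffices to exhibit one good example, and then the set of axis-aligned pairs admitting a good parallel lifting is generic (a Zariski-open non-empty subset), which is what is needed downstream. To produce the example I would mimic the endgame of Lemma \ref{lemma:lifting_into_general_positions}: all the $v_k$ share the common leading entry $2$, so the differences $v_{2l-1}-v_1$ kill the $e_1$-coordinate and are linear in the $a_j$; arranging $v_3,v_7,\dots$ together with these differences into a block matrix and interchanging a pair of columns isolates the $a$-dependence on an off-diagonal together with one row, after which a substitution of the form $a_{j+1}-a_j=\varepsilon$ for small $\varepsilon$ together with one large entry makes the determinant visibly nonzero. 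The main obstacle is precisely this last explicit verification: because the second coordinates are no longer available (they are forced to be $\pm1$), all the perturbation freedom must come from the single family $(a_j)$, and one has to check that the rigid alternating pattern does not conspire with the cyclic wraparound to force a rank drop. Everything else is a routine transcription of the corresponding step in Section \ref{sec:pentagram}.
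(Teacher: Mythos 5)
Your proposal is essentially identical to the paper's argument: the paper also proves this by transcribing the proof of Lemma \ref{lemma:lifting_into_general_positions} with the roles of the two coordinates swapped (the $y$-coordinates now pinned to the alternating $\pm 1$ pattern, the $x$-coordinates free), and then makes the determinant nonzero by taking one $x$-coordinate large and the remaining differences small. The final explicit verification that you flag as the "main obstacle" is treated no more explicitly in the paper, which simply asserts that "other than the change of indices all other calculations remain the same."
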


\begin{proof}
The proof is analogous to the proof of Lemma \ref{lemma:lifting_into_general_positions}, for which we set the $x$-coordinates of the points to be $0, 1, 2, ..., n-1$ and let the $y$-coordinates vary. Here we have to require the $y$ coordinates of the sequence of points to be $1, -1, 1, -1, ... $, but we can let $x$ vary. Other than the change of indices all other calculations remain the same, namely we can choose 1 $x$-coordinate to be large and all other $x$-coordinates to be sufficiently small such that the determinant of the matrix is nonzero. Hence by a similar argument of Lemma \ref{lemma:lifting_into_general_positions}, the statement of the lemma follows.  
\end{proof}

For a parallel lifting, we label the prisms by $$T_2 = \widetilde A_1 \widetilde A_3, \quad T_4 = \widetilde A_3 \widetilde A_5, \quad  ..., \quad T_{2n-4} = \widetilde A_{2n-5} \widetilde A_{2n-3}.$$ Since $(P, r(P))$ is an axis-aligned pair, it is clear that all the prisms coincide, in other words, $$T_2 = T_4 = ... = T_{2n-4} := T.$$


Recall that a parallel lifting of $A_1, ..., A_{2n-3}$ is good if it satisfies the requirements in Lemma \ref{lemma:mirror_pentagram_lifting_into_general_positions}, and a good lifting is perfect if the sequence $(\widetilde A_1, \widetilde A_3, ..., \widetilde A_{2n-3})$ is fully sliced. 

\begin{lemma}{\label{lemma:mirror_pentagram_perfect_lifting}} For a generic axis-aligned pair $(P, r(P))$, let $A_1, ..., A_{2n-3}$ be as constructed, then there exists a perfect lifting into $\R^n$.  \end{lemma}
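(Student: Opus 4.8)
## Proof Proposal for Lemma \ref{lemma:mirror_pentagram_perfect_lifting}

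The plan is to mimic the argument for Lemma \ref{lemma:perfect_lifting_existence} (equivalently, Lemma 3.4 in \cite{Schwartz2}), adapting it to the present setting where all the prisms coincide into a single prism $T$. The key observation that makes the genericity argument work is the standard dimension-count / Zariski-openness principle: the set of axis-aligned pairs $(P, r(P))$ admitting a perfect lifting is cut out inside the space of all (axis-aligned pair, parallel lifting) configurations by the non-vanishing of finitely many polynomials — one transversality condition for each intersection $H_{g,k} \cap \Sigma_g T$ that appears in the definition of ``fully sliced'' — so it is Zariski open, and hence generic provided it is non-empty.

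The main work, therefore, is to exhibit a single axis-aligned pair together with a good parallel lifting that is in fact perfect, i.e. for which every relevant $H_{g,k}$ genuinely slices $T$. First I would invoke Lemma \ref{lemma:mirror_pentagram_lifting_into_general_positions} to fix a good lifting, so that the hyperplanes $|\widetilde A_1|, |\widetilde A_3|, \ldots, |\widetilde A_{2n-3}|$ are in general position; consequently each $H_{g,k}$, being an intersection of $g$ of these hyperplanes, is a flat of codimension exactly $g$ in $\R^n$. Next I would check transversality of $H_{g,k}$ with the cyclic skeleton $\Sigma_g T$ of the (now unique) prism $T$. Because $T$ itself was constructed from a good lifting whose coordinates we chose explicitly (the $x$-coordinates free, the $y$-coordinates alternating $1,-1,1,-1,\ldots$, and the lift heights $e_3,\dots,e_n$ of the remaining points), the faces of $\Sigma_g T$ are spanned by explicit coordinate data, and the incidence conditions $\dim\big(H_{g,k}\cap \sigma\big) = 0$ for each $g$-face $\sigma \in \Sigma_g T$ become non-vanishing of explicit polynomial expressions in the free parameters. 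As in Schwartz's argument, one can perturb the free $x$-coordinates (and, if needed, the lift heights within the good locus, which is itself Zariski open by Lemma \ref{lemma:mirror_pentagram_lifting_into_general_positions}) to make all these finitely many determinants simultaneously nonzero, since each is a nonzero polynomial and a finite intersection of dense Zariski-open sets is dense. This produces the required perfect lifting for one configuration, which establishes non-emptiness and hence the genericity claim.

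I expect the main obstacle to be verifying that the relevant transversality polynomials are not \emph{identically} zero on the parameter space — unlike in Schwartz's original setup, our configuration is more constrained (all prisms are forced to coincide, and the underlying $2n$ points of the axis-aligned pair collapse onto two horizontal lines in $\R^2$), so one must be slightly careful that these degeneracies do not force some intersection $H_{g,k}\cap\Sigma_g T$ to be positive-dimensional or empty for \emph{every} choice of parameters. The cleanest way around this is to note that the collapse of the prisms is exactly the feature exploited in Lemmas \ref{lemma:mirror_pentagram_mating_n_even} and \ref{lemma:mirror_pentagram_mating_n_odd}: the $(n-1)$-fold (star-)mating process on $W_1$ is well defined precisely when $MP^{n-2}$ is well defined on $(P,r(P))$, and the latter holds generically because $MP$ is birational (Lemma \ref{lemma:mirror_pentagram_inverse}); combined with Lemma \ref{lemma:intersection_gives_mating_process} (whose proof carries over verbatim, the projection $\pi$ now landing in $\R^2$ as before), this shows that for a generic such pair the mating process — hence all the intersections encoding it — stays transverse, which is exactly the statement that the lifting is perfect. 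Thus the lemma follows by combining Lemma \ref{lemma:mirror_pentagram_lifting_into_general_positions} with the birationality of $MP$ and the (dimension-count) genericity of $MP^{n-2}$ being defined, exactly as Lemma \ref{lemma:perfect_lifting_existence} followed from the corresponding facts in Section \ref{sec:pentagram}.
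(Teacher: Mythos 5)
Your first two paragraphs are essentially the argument the paper intends: the paper's own proof of this lemma is a one-line deferral to Lemma 3.4 of \cite{Schwartz2}, which is exactly the Zariski-openness-plus-nonemptiness perturbation argument you outline (finitely many transversality conditions, each the non-vanishing of a polynomial in the free coordinates and lift heights, made simultaneously nonzero by perturbing within the good locus supplied by Lemma \ref{lemma:mirror_pentagram_lifting_into_general_positions}). On that core route you and the paper agree.

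The problem is your final paragraph. To handle your (legitimate) worry that the extra degeneracies of this setting might force some transversality polynomial to vanish identically, you propose to deduce that the lifting is fully sliced from the generic well-definedness of $MP^{n-2}$ on $(P, r(P))$, via Lemmas \ref{lemma:mirror_pentagram_mating_n_even}/\ref{lemma:mirror_pentagram_mating_n_odd} and \ref{lemma:intersection_gives_mating_process}. This reverses the direction of implication: Lemma \ref{lemma:intersection_gives_mating_process} and Lemma \ref{lemma:fully_sliced_mating_process} say that \emph{if} the configuration slices (a condition on the lifted objects in $\R^n$, namely that $H_{g,k}\cap\Sigma_g T$ consists of $n$ distinct points and $H_{g,k}\cap\Sigma_{g+1}T$ of distinct lines), \emph{then} the projections reproduce the mating process. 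The converse is not available: the planar (star-)mating process can be perfectly well defined while some intersection upstairs fails to be transverse, since well-definedness downstairs constrains only the images under $\pi$, not the dimensions of the intersections in $\R^n$. So the "cleanest way around" does not close the gap you identified; the non-identical-vanishing of the slicing determinants for the single prism $T$ has to be checked directly (which is what the cited Lemma 3.4 of \cite{Schwartz2} does, and what your second paragraph already sketches). Drop the last paragraph and carry out that direct verification, and the proof is the paper's.
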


\begin{proof}
This follows directly from lemma 3.4 in Schwartz \cite{Schwartz2}.
\end{proof}

Next we prove the main theorem of the section.

\begin{proof}[Proof of Theorem \ref{T007}] \indent

1. $n$ is even.

We know that $(P, r(P))$ is a generic axis-aligned polygon with a perfect lifting into $\R^n$. Let $(A_1, A_3, ..., A_{2n-3})$ and $W_1$ be defined as in \ref{subsubsection:even}. By assumption we can lift $W_1 = (A_1, ..., A_{2n-3})$ into $(\widetilde A_1, \widetilde A_3, ..., \widetilde A_{2n-3})$ through a perfect lifting. Let $T$ be as defined above, and we know that the polyjoint $(\widetilde A_1, \widetilde A_3, ..., \widetilde A_{2n-3})$ is fully sliced.  By assumption, $MP^{n-2}$ is well defined on $(P, r(P))$, so according to Lemma \ref{lemma:mirror_pentagram_mating_n_even}, $(n-2)$ iterations of the mating process is well defined on $W_1$. We have:
$$W_1 \rightarrow W_2 \rightarrow ... \rightarrow W_{n-1}.$$ Now define $H_{g,k}$ similarly as in Section \ref{sec:pentagram} where $H_{1,i} = \widetilde A_i$, and also define $Z_{g,k} = \pi (H_{g, k} \cap \Sigma_g T)$ for all relevant indices [note that in this case $T = T_2 = ... = T_{2n-4}$]. Let $Z_g = (Z_{g, g}, Z_{g,g+2}, ..., Z_{g, 2(n-1)-g})$, so $Z_1 = W_1$. Apply Lemma \ref{lemma:fully_sliced_mating_process} to the fully sliced polyjoint  $(\widetilde A_1, \widetilde A_3, ..., \widetilde A_{2n-3})$, and we get: 
$$Z_1 \rightarrow Z_2 \rightarrow ... \rightarrow Z_{n-1}.$$ Thus $W_i = Z_i$ for all relevant $i$. Now we repeat the argument in the proof of Theorem \ref{T002} using Lemma \ref{lemma:mirror_pentagram_lifting_of_centroids} and Lemma \ref{lemma:mirror_pentagram_lifting_into_general_positions}, and obtain that the line $l_{n-1} = \pi (H_{n-1, n-1})$ goes through all the points in $Z_{n-1}$, and also goes through the point $$\pi (C_1) = \pi(C_2) = ... \pi(C_{2n-3}) = (\mathscr C(P), 0).$$ Let $MP^{n-2} (P, r(P)) = (Q, r(Q))$ and $Q = (Q_1, Q_2, ..., Q_n)$, then since $$Z_{n-1} = W_{n-1} = (Q_{n-1}, Q_n', Q_1, Q_2', ..., Q_{n-3}, Q_{n-2}'), $$ all these points in $W_1$ goes through the line $l_{n-1}$. By symmetry we know that there is a line $l_{n-1}'$ that goes through all the points in $$W_n' = r(W_n) = (Q_{n-1}', Q_{n}, Q_1' Q_2, ..., Q_{n-2}),$$ and also goes through $r(\mathscr C (P), 0) = (\mathscr C(P), 0)$.  Since $MP$ is well defined on $(Q, r(Q))$ by assumption, we know that $l_{n-1}$ and $l_{n-1}'$ are distinct lines so intersect at most $1$ point. So their common point $(\mathscr C(P), 0)$ is the intersection $l_{n-1} \cap l_{n-1}' $. By the definition of the mirror pentagram map, we know that $(\mathscr C(P), 0)$ is precisely the degenerate point of $MP (Q, r(Q)) = MP^{n-1}(P, r(P))$. Hence the point $s$ in the statement of the theorem is $s = (\mathscr C(P), 0)$, where $p(s) = \mathscr C(P)$. 

2. $n$ is odd. 

This is a slight variant of the proof above. This time we let $W_1(1) = (A_1, A_3, ...,$  $A_{2n-3})$ and through a perfect lifting obtain $\widetilde W_1(1) = (\widetilde A_1, ..., \widetilde A_{2n-3})$, which is by definition fully sliced. Since $MP^{n-2}$ is well defined on $(P, r(P))$, by Lemma \ref{lemma:mirror_pentagram_mating_n_odd} we know that the following star-mating is well defined:
 $$W_1(1) \leadsto W_2(1)  \leadsto ...  \leadsto W_{n-1}(1).$$ Analogous to the previous case, we define $H_{1,i}$ and $H_{g,k}$, and let $Z_{g, k} = \pi (H_{g,k} \cap \Sigma_g T). $ Note that Lemma \ref{lemma:fully_sliced_mating_process} no longer applies since the mating process on any two pairs $Z_{g,k}$ and $Z_{g,k+2}$ is not well defined. In order to use the lemma we make the following definition: for each $Z_{g,k}$, an ordered sequence of $n$ points, we take the first $n-g+1$ points from $Z_{g,k}$ to form a new sequence $V_{g,k}$, and let $V_{g} = (V_{g,g}, V_{g, g+2}, ..., V_{g, 2(n-1)-g}).$ Lemma \ref{lemma:fully_sliced_mating_process} implies that, if star-mating process is well defined on all $V_i$, then $$V_1 \leadsto V_1 \leadsto  ... \leadsto V_{n-1}$$ since $V_1 = Z_1 = W_1(1)$ and $\widetilde W_1 (1)$ is fully sliced.  Therefore, we know that $V_{n-1}= W_{n-1}(1) $. Let $MP^{n-2}(P, r(P)) = (Q, r(Q))$, then $$V_{n-1} = W_{n-1}(1) = (Q_{n-1}, Q_n')$$ by Lemma \ref{lemma:mirror_pentagram_mating_n_odd}. By a similar argument as in the even case, we know that the line $l_{n-1}(1) = Q_{n-1} Q_n'$ goes through the point $(\mathscr C(P), -1/n)$. By symmetry we know that, all lines $$l_{n-1}(2) = Q_n Q_1',\quad l_{n-1}(3) = Q_1 Q_2', ...,\quad l_{n-1} (n) = Q_{n-2} Q_{n-1}'$$ go through the point $(\mathscr C(P), -1/n)$. By assumption $MP^{n-1}$ is well defined on $(P, r(P))$, so the lines are pairwise distinct. So the lines are concurrent and intersect at the common point $(\mathscr C(P), -1/n)$. Let $MP^{n-1}(P, r(P)) = (S, r(S))$ as in the statement of the theorem, by the definition of the mirror pentagram map, $$S_i  = Q_{i-1}Q_i' \cap Q_i Q_{i+1}' = l_{n-1}(i+1) \cap l_{n-2} (i+2),$$ which implies $S_1 = S_2 = ... S_n = (\mathscr C(P), -1/n)$.  

This proves the theorem.
\end{proof}
\vspace*{0.2cm}

\subsection{Some interesting corollaries}
As a corollary, we obtain Theorem \ref{T005}, which answers one of the conjectures of Glick's.  Before we prove the theorem, we first observe that whenever the lower pentagram map is well defined, the corresponding mirror pentagram map is also well defined. We state the observation in the following lemma:

\begin{lemma}\label{lemma:lower_map_to_mirror_map}
\label{lemma:lower_pentagram_well_defined}
Let $(\infty, B) $ denote an axis-aligned polygon $(A, B) $ where $A = (\infty, \infty, ..., \infty)$. Let $(P, r(P))$ be an axis-aligned pair such that $p(P) = B$,  then $MP^k$ is well defined on $(P, r(P))$ if $T_1^k$ is well defined on $(\infty, B )$. 
\end{lemma}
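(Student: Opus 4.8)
\textbf{Proof plan for Lemma \ref{lemma:lower_map_to_mirror_map}.}

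The plan is to reduce the statement to the correspondence already established in Lemma \ref{lemma:lower_pentagram_commutes_with_mirror_pentagram}, which says that if we set $B = p(P)$ and $A = p(P^{-1})$ where $(P^{-1}, r(P^{-1})) = MP^{-1}(P, r(P))$, then $T_1$ applied to the pair $(A, B)$ matches $MP$ applied to $(P, r(P))$ after projecting by $p$; and more generally that $T_1^k$ on $(p(P^{-1}), p(P))$ corresponds to $MP^{k-1}$ and $MP^k$ on $(P, r(P))$. The subtlety here is that in Lemma \ref{lemma:lower_pentagram_commutes_with_mirror_pentagram} the "first coordinate" $A$ of the lower-pentagram pair was forced to be $p(P^{-1})$, whereas in our situation we start from an \emph{axis-aligned} pair $(\infty, B)$ where the first $n$-point is uniformly $\infty$. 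So first I would check that for an axis-aligned pair $(P, r(P))$ in the sense of the previous subsection — that is, $P$ lying on a horizontal line $l \parallel l_0$ — the inverse mirror pentagram map $MP^{-1}$ (Lemma \ref{lemma:mirror_pentagram_inverse}) produces $P^{-1}$ lying on the line at infinity, so that $p(P^{-1}) = (\infty, \ldots, \infty)$. This is a direct computation: by Lemma \ref{lemma:mirror_pentagram_inverse}, $X_i^{-1} = Y_i Y_{i+1} \cap Y_{i-1}' Y_i'$ where here the roles of $Y, Y'$ are played by $P, r(P)$; since $P$ and $r(P)$ both lie on horizontal lines ($y = c$ and $y = -c$ respectively), the line through $P_i P_{i+1}$ is horizontal and its reflection $r(P_{i-1}) r(P_i)$ is the opposite horizontal line, so their intersection lies at infinity. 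Hence $p(P^{-1})$ is the constant sequence $\infty$, i.e. $(p(P^{-1}), p(P)) = (\infty, B)$ with $B = p(P)$, exactly the axis-aligned polygon in the statement.

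Granting that, the argument runs as follows. Suppose $T_1^k$ is well defined on $(\infty, B)$. Choose any axis-aligned pair $(P, r(P))$ with $p(P) = B$ (possible since $p$ is surjective and we are free to put $P$ on any horizontal line $\ne l_0$). By the paragraph above, $MP^{-1}(P, r(P)) = (P^{-1}, r(P^{-1}))$ is defined and $p(P^{-1}) = \infty$, so the pair $(\infty, B)$ equals $(p(P^{-1}), p(P))$. Now I would argue inductively on $k$ using Lemma \ref{lemma:lower_pentagram_commutes_with_mirror_pentagram}: well-definedness of $T_1^{j}$ on $(\infty, B)$ entails, via the identity $T_1^{j}[p(P^{-1}), p(P)] = (p[\mathrm{MP}^{j-1}(P, r(P))], p[\mathrm{MP}^{j}(P, r(P))])$, that $\mathrm{MP}^{j}(P, r(P))$ is well defined — provided one checks that the only way $MP$ could fail (two of the relevant lines coinciding or a point escaping to infinity in a way invisible after projection) is reflected in a failure of $T_1$. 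Concretely: the genericity hypothesis allows us to assume the lifted configuration stays in general position, and the geometric content of Menelaus' theorem (Figure \ref{figure:mirror_pentagram_and_lower_pentagram}) shows the $MP$ construction and the $T_1$ construction use the \emph{same} incidence data up to the projection $p$, so a degenerate intersection upstairs projects to a degenerate one downstairs. Iterating from $j = 1$ to $j = k$ yields that $MP^k$ is well defined on $(P, r(P))$, which is the claim.

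The main obstacle I expect is the last point: controlling the failure modes of $MP$ versus those of $T_1$. The map $MP$ lives in $\P^2$ and takes intersections of honest lines, while $T_1$ lives in $\P^1$; a priori $MP$ could degenerate (say, two lines $X_i X_{i+1}'$ and $X_{i-1} X_i'$ become parallel/coincident) without the projected data on $\P^1$ detecting a problem, or conversely the projection $p$ could collapse distinct points. The clean way around this is to work at the level of the correspondence in Lemma \ref{lemma:lower_pentagram_commutes_with_mirror_pentagram} and note that the fiber of $p$ over each $T_1$-configuration is constrained by the axis-aligned/horizontal-line structure — every $MP$ image of an axis-aligned pair is again of the form $(P', r(P'))$ with $P'$ on a horizontal line (a fact one reads off from $r \circ MP = MP \circ r$ together with the reflection-symmetry of the construction) — so the vertical direction carries no independent degeneracy, and "$T_1^k$ defined" is genuinely equivalent (not merely implied by, though we only need one direction) to "$MP^k$ defined" on the corresponding axis-aligned pair. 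Making this equivalence precise, rather than waving at the figure, is where the real work lies; everything else is bookkeeping with the cross-ratio identity $[a,b,c,d,e,f] = \frac{(a-b)(c-d)(e-f)}{(b-c)(d-e)(f-a)}$ and Menelaus' theorem.
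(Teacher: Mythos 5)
The paper states this lemma as a bare observation and gives no proof at all, so there is nothing to compare your argument against; judged on its own, your proposal has the right skeleton but rests on one load-bearing claim that is false. The correct parts first: your opening computation that $MP^{-1}$ of an axis-aligned pair $(P,r(P))$ sends every vertex to the horizontal point at infinity (the lines $\overline{P_iP_{i+1}}=l$ and $\overline{r(P_{i-1})r(P_i)}=r(l)$ are distinct parallels), hence $(p(P^{-1}),p(P))=(\infty,B)$, is exactly what is needed to hook the statement onto Lemma \ref{lemma:lower_pentagram_commutes_with_mirror_pentagram}, and inducting on $k$ through that correspondence is the right plan.

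The gap is in how you control the failure modes. You assert that every $MP$ image of an axis-aligned pair is again of the form $(P',r(P'))$ with $P'$ on a horizontal line, deduced from $r\circ MP = MP\circ r$. That identity only gives that the image is a mirror-symmetric pair $(Q,r(Q))$; it says nothing about $Q$ being collinear on a line parallel to $l_0$, and indeed it is not (see Figures \ref{figure:MP_map_example} and \ref{figure:mirror_pentagram_collapse}; if $MP$ preserved horizontality, the collapse in Theorem \ref{T007} could never happen). So your mechanism ``the vertical direction carries no independent degeneracy'' is unsupported. What actually closes the argument is more local: $MP$ applied to a mirror pair $(Q,r(Q))$ can only fail if (i) some $Q_i=Q_{i+1}'$, so a defining line degenerates, or (ii) the lines $\overline{Q_iQ_{i+1}'}$ and $\overline{Q_{i-1}Q_i'}$ coincide. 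Using $p\circ r=p$, case (i) forces $p(Q_i)=p(Q_{i+1})$; in case (ii) the common line contains both $Q_i$ and $Q_i'=r(Q_i)$, so (when $Q_i\notin l_0$) it is the fiber of $p$ through $Q_i$ and hence $p(Q_{i-1})=p(Q_i)=p(Q_{i+1})$. Either coincidence destroys the unique solvability of the six-point cross-ratio equation defining $T_1$ at the corresponding step, so a degeneracy upstairs is always visible downstairs; the remaining edge cases ($Q_i\in l_0$, or a vertex at the center of the projection $p$) have to be excluded by the genericity hypothesis. Replacing your horizontal-line claim with this case analysis would make the proof complete.
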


\begin{theorem*}[\ref{T008}]
Let $(\infty, B)$ be a generic axis-aligned $n$-gon such that $T_1^{n-1}$ is defined, then $T_{1}^{n-1}(\infty, B) = (C, D)$ where $D = (d_1, d_2, ..., d_n)$ and $$d_1 = d_2 =... = d_n = \mathscr C (B).$$
\end{theorem*}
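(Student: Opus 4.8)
The plan is to deduce Theorem~\ref{T008} directly from Theorem~\ref{T007} via the geometric dictionary between the lower pentagram map and the mirror pentagram map established in Lemma~\ref{lemma:lower_pentagram_commutes_with_mirror_pentagram} (and Lemma~\ref{lemma:lower_map_to_mirror_map}). First I would lift the given axis-aligned $n$-gon $(\infty,B)$ to the mirror pentagram side: choose $n$ points $P=(X_1,\dots,X_n)\subset\P^2$ lying on a horizontal line $l$ parallel to $l_0$ (say $y=-1$) such that the vertical projection $p(P)=B$; this is exactly the axis-aligned pair $(P,r(P))$ whose point of collapse Theorem~\ref{T007} computes. One checks that with $A=p(P^{-1})$ and $B=p(P)$ the pair $(A,B)$ is genuinely axis-aligned, i.e. $A$ is uniformly $\infty$ — this is the content of the note attached to Lemma~\ref{lemma:lower_map_to_mirror_map}, since $P$ sitting on a line parallel to the $x$-axis means the lines $Y_i'Y_{i+1}'$ and $Y_{i-1}Y_i$ used to recover $X_i$ pass through the point at infinity on $l_0$, forcing $p(P^{-1})$ to be at infinity.

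Next I would invoke Lemma~\ref{lemma:lower_pentagram_commutes_with_mirror_pentagram} with $k=n-1$: since $T_1^{n-1}$ is assumed well defined on $(\infty,B)$, Lemma~\ref{lemma:lower_map_to_mirror_map} guarantees $MP^{n-1}$ is well defined on $(P,r(P))$, and the commutation identity gives
$$T_1^{n-1}\big[p(P^{-1}),p(P)\big]=\Big(p[\mathrm{MP}^{n-2}(P,r(P))],\ p[\mathrm{MP}^{n-1}(P,r(P))]\Big).$$
By Theorem~\ref{T007}, $\mathrm{MP}^{n-1}(P,r(P))=(S,r(S))$ with $S_1=\dots=S_n=s$ and $p(s)=\mathscr C(p(P))=\mathscr C(B)$. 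Applying $p$ to the second coordinate of the right-hand side yields $D=p(S)=(p(s),\dots,p(s))$, hence $d_1=\dots=d_n=\mathscr C(B)$, which is precisely the claim. The first coordinate $C=p[\mathrm{MP}^{n-2}(P,r(P))]$ plays no role in the statement and need not be identified.

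There are two points that require a little care rather than being purely formal. The first is genericity bookkeeping: Theorem~\ref{T007} is proved for a \emph{generic} axis-aligned pair with a perfect lifting, so I must argue that the set of $(\infty,B)$ for which $T_1^{n-1}$ is defined \emph{and} the associated $(P,r(P))$ admits a perfect lifting is still generic in the space of axis-aligned $n$-gons on $\P^1$. This follows because lifting $B$ to $P$ on the line $y=-1$ is (an affine section of) an algebraic isomorphism onto the space of axis-aligned pairs, perfect liftings form a Zariski-open dense subset there by Lemma~\ref{lemma:mirror_pentagram_perfect_lifting}, and well-definedness of the iterates is a Zariski-open condition compatible under the correspondence by Lemma~\ref{lemma:lower_map_to_mirror_map}. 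The second, and the step I expect to be the genuine obstacle, is verifying that $p$ really intertwines the \emph{center of mass} notions on the two sides — i.e. that $\mathscr C(p(P))$ computed in $\P^1$ (normalizing the $\infty$-row away) matches $p$ applied to the collapse point $s$ produced in $\P^2$. Here one uses that $P$ lies on a horizontal line, so $p|_l$ is an affine bijection $l\to\R^1$ commuting with averaging, and that Theorem~\ref{T007} already phrases its conclusion as $p(s)=\mathscr C(p(P))$; so in fact this compatibility is built into the statement of Theorem~\ref{T007} and the remaining work is just to confirm the projective transformation used to define $\mathscr C(B)$ in $\P^1$ can be taken as $p|_l$ followed by a translation, which is immediate. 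With these checks the theorem follows.
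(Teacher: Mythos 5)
Your proposal is correct and follows essentially the same route as the paper: lift $B$ to a polygon $P$ on a horizontal line with $p(P)=B$ and $P^{-1}=(\infty)$, apply Theorem \ref{T007} to get the collapse point $s$ with $p(s)=\mathscr C(B)$, and then use Lemma \ref{lemma:lower_pentagram_commutes_with_mirror_pentagram} with $k=n-1$ to transport the conclusion back to $T_1$. The extra care you take with genericity bookkeeping and the compatibility of the two center-of-mass notions is left implicit in the paper but does not change the argument.
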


\begin{proof}
We lift the polygon $B \subset  \R^1 \subset \P^1$ to a polygon $P$ on a horizontal line $l$, which is parallel to the $x$-axis, so $p (P) = B$, and $P^{-1} = (\infty)$, where $(P^{-1}, r(P^{-1})) = MP^{-1} (P, r(P))$.
Lemma \ref{lemma:lower_map_to_mirror_map} asserts that $MP^{n-1}$ is well defined on $(P, r(P))$. By Theorem \ref{T007}, we know that $MP^{n-1} (P, r(P))$ consists of two points $(s, r(s))$, where $s = \mathscr C(B)$. Hence $$p[MP^{n-1} (P, r(P))] = \mathscr C(B).$$
Now, by Lemma \ref{lemma:lower_pentagram_commutes_with_mirror_pentagram},
\begin{eqnarray*} T_1^{n-1}(\infty, B)  & = & T_1^{n-1} [p(P^{-1}), p(P)] \\ & = &  \Big(p[\text{MP}^{n-2} (P, r(P))],  p[\text{MP}^{n-1} (P, r(P))]\Big) \\ &=& \Big(C, \mathscr C(B) \Big). 
\end{eqnarray*}
In the equation, we have $T_{1}^{n-1}(\infty, B) = (C, D)$ where $C = p[\text{MP}^{n-2} (P, r(P))]$ is some $n$-gon in $\P^1$, and $D = \mathscr C(B)$ as desired.
\end{proof}

\begin{remark}
This strengthens theorem $6.15$ in Glick's paper \cite{Glick1} for the case when the polygon he considers is closed.  Glick showed that for a twisted polygon, under some iterations of the twisted pentagram map, the twisted polygon collapses to a point. We predict the position and number of iterations for the case of closed polygons.
\end{remark}

Now we return to Theorem \ref{T005} stated in the introduction. Recall that we consider the following chessboard patterns of integers in the plane: 

\begin{equation*}
\arraycolsep=0.05pt
\medmuskip = 2mu
\begin{array}{ccccccccccccccccccccccc}
... \hspace*{0.2cm}&  & X_{(0, 2n)}  & & X_{(0,2)}  & & X_{(0,4)}  & & ...  & &  X_{(0,2n)} & & X_{(0, 2)} & &  \hspace*{0.2cm} ...  \\
&...&  & X_{(1, 1)} & & X_{(1, 3)} & & ... \hspace*{0.1cm}& &\hspace*{0.1cm} ... && X_{(1, 1)} && ...  \\
...\hspace*{0.2cm} &  & X_{(2, 2n)}  & & X_{(2,2)}  & & X_{(2,4)}  & & ...  & &  X_{(2,2n)} & & X_{(2, 2)} & &   \hspace*{0.2cm} ... \vspace*{0.2cm} \\
\vspace*{0.05cm}
& & && && \vdots  \\
& ... & & X_{(2n-1, 1)} & & X_{(2n-1, 3)} & & ...  \hspace*{0.1cm}& &\hspace*{0.1cm}... && X_{(2n-1, 1)} && ... \\
...\hspace*{0.2cm} &  & X_{(2n, 2n)}  & & X_{(2n,2)}  & & X_{(2n,4)}  & & ...  & &  X_{(2n,2n)} & & X_{(2n, 2)} & &   \hspace*{0.2cm} ...  
\end{array}
\end{equation*} \vspace*{0.2cm} \\ 
where $A_{2i} = \{X_{(2i, 0)}, X_{(2i, 2)}, ..., X_{(2i, 2n)}\}$ and $A_{2i+1}= \{X_{(2i+1, 1)}, X_{(2i+1, 3)}, ..., $ $ X_{(2i+1, 2n-1)}\}$ for $0 \le i \le n-1$, and the corresponding cross ratios are all $-1$ as described in Section \ref{sec:introduction}. 

\begin{lemma}\label{lemma:grid_embedding}
Let $A_0, A_1, ..., A_{2n-1}$ be as above, then for any pair $A_{i-2}, A_{i}$, $2 \le i \le 2n-3$, the lower pentagram map $T_1$ is well defined on $(A_{i-2}, A_{i})$ and $$T_1(A_{i-2}, A_{i}) = (A_{i}, A_{i+2}).$$ 
\end{lemma}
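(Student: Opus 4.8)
The plan is to verify, for each relevant vertex index $m$, the defining six‑point cross‑ratio relation of $T_1$ directly from the four elementary harmonic ``diamonds'' of the frieze that surround the corresponding lattice point, after rewriting every harmonic condition in a reciprocal (``Apollonius'') form that makes the relations additive. Only the definitions of the cross ratio frieze pattern and of $T_1$ are needed.

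\textbf{Set‑up and index dictionary.} Without loss of generality take $i$ even, say $i=2j$; the case of odd $i$ is identical with the two column‑parities interchanged, and $2\le i\le 2n-3$ keeps all of $A_{i-2},\dots,A_{i+2}$ inside the pattern. Write $X_m=X_{(2j-2,2m)}$, $Y_m=X_{(2j,2m)}$, $Z_m=X_{(2j+2,2m)}$ for the three equal‑parity rows that should play the roles of $X,Y,Z$, and let $P_m=X_{(2j-1,2m-1)}$, $R_m=X_{(2j+1,2m-1)}$ be the two intervening rows. Reading off the diamond rule, for each $m$ one has the four harmonic conditions
$$[X_m,P_m,Y_m,P_{m+1}]=-1,\qquad [P_m,Y_{m-1},R_m,Y_m]=-1,$$
$$[P_{m+1},Y_m,R_{m+1},Y_{m+1}]=-1,\qquad [Y_m,R_m,Z_m,R_{m+1}]=-1$$
(the diamonds centred on $A_{2j-1}$, on $A_{2j}$ at two consecutive positions, and on $A_{2j+1}$). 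The goal is to show these force $[X_m,Y_m,Y_{m-1},Z_m,Y_m,Y_{m+1}]=-1$, which is exactly the rule defining $T_1(A_{i-2},A_i)$, with the output second component being $A_{i+2}$ and matching vertex labels.

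\textbf{Reciprocal form and elimination.} The key elementary fact is that for $a,b,c,d\in\P^1$,
$$[a,b,c,d]=-1\ \Longleftrightarrow\ \frac{1}{b-a}+\frac{1}{d-a}=\frac{2}{c-a},$$
and a one‑line expansion of the six‑point cross ratio shows
$$[X_m,Y_m,Y_{m-1},Z_m,Y_m,Y_{m+1}]=-1\ \Longleftrightarrow\ \frac{1}{Z_m-Y_m}=\frac{1}{Y_{m-1}-Y_m}+\frac{1}{Y_{m+1}-Y_m}-\frac{1}{X_m-Y_m}.$$
Applying the first equivalence to the four diamonds above (each time taking $Y_m$ as the base point) they become
$$\frac{1}{P_m-Y_m}+\frac{1}{P_{m+1}-Y_m}=\frac{2}{X_m-Y_m},\qquad \frac{1}{R_m-Y_m}=\frac{2}{Y_{m-1}-Y_m}-\frac{1}{P_m-Y_m},$$
$$\frac{1}{R_{m+1}-Y_m}=\frac{2}{Y_{m+1}-Y_m}-\frac{1}{P_{m+1}-Y_m},\qquad \frac{2}{Z_m-Y_m}=\frac{1}{R_m-Y_m}+\frac{1}{R_{m+1}-Y_m}.$$
Adding the two middle identities and substituting them into the last one, the combination $\tfrac{1}{P_m-Y_m}+\tfrac{1}{P_{m+1}-Y_m}$ appears and is replaced by $\tfrac{2}{X_m-Y_m}$ using the first identity; the contributions of $P_m,P_{m+1}$ (and those of $R_m,R_{m+1}$) cancel, leaving exactly $\tfrac{1}{Z_m-Y_m}=\tfrac{1}{Y_{m-1}-Y_m}+\tfrac{1}{Y_{m+1}-Y_m}-\tfrac{1}{X_m-Y_m}$, i.e.\ the desired six‑point identity. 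Since a point $Z_m\in\P^1$ satisfying it is exhibited (namely $X_{(2j+2,2m)}$) and, by genericity of the pattern, the quantities subtracted above are pairwise distinct so all M\"obius manipulations are legitimate, $T_1$ is well defined on $(A_{i-2},A_i)$ and sends it to $(A_i,A_{i+2})$.

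\textbf{Main difficulty.} Conceptually the only subtlety is the index bookkeeping: one must check that the four diamonds listed are the right ones and that the labelling of $A_{i+2}$ produced by the $T_1$ rule matches the labelling in the pattern. It is worth observing that a priori each row of the frieze is pinned down by the previous two only through the entire cyclic system, so it is not obvious that $A_{i+2}$ depends on $(A_{i-2},A_i)$ in the purely local way $T_1$ prescribes; the computation above shows that the particular consequence we need is nonetheless a local identity among just four diamonds, with the intervening rows $P,R$ eliminated. The case of odd $i$ requires no new idea.
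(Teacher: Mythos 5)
Your proposal is correct: you reduce the lemma to the same local statement the paper does, namely that the four harmonic diamonds surrounding a lattice point force the six-point cross-ratio relation defining $T_1$, and your index bookkeeping (which diamonds are involved, and that the output row is $A_{i+2}$ with matching labels) agrees with the paper's. The execution of the verification differs, though. The paper normalizes the top vertex to $\infty$ by projective invariance, which forces the middle vertex to be the midpoint of its two neighbours, and then explicitly solves the remaining harmonic conditions for $W_2$ and checks the six-point relation by brute force. You instead convert every harmonic condition $[a,b,c,d]=-1$ into the additive reciprocal identity $\tfrac{1}{b-a}+\tfrac{1}{d-a}=\tfrac{2}{c-a}$ with the common base point $Y_m$ (using that the harmonic relation is symmetric under the dihedral permutations preserving the pairing $\{a,c\},\{b,d\}$, which is what lets you re-anchor each diamond at $Y_m$), and then eliminate the intermediate rows by adding linear equations. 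I checked the reciprocal form of the six-point relation and the cancellation; both are right. Your method buys a normalization-free, manifestly symmetric computation with no explicit formula for $Z_m$ needed, at the small cost of having to justify that all the differences are finite and nonzero — which, as you note, follows from genericity (or from Möbius invariance of both sides of the identity). Either argument is complete; yours is arguably the cleaner of the two.
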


\begin{proof}

We want to show that for any sub-pattern as the following 
\begin{equation*}
\arraycolsep=0.5pt
\medmuskip = 4mu
\begin{array}{ccccccccccccccccccccccc}
 & & &  & & V_2 & & & & &  \\
& & & & X_1 & &X_3 & & & & &&  \\
& & & Y_0& & Y_2& &  Y_4 & & & &  \\
& & & & Z_1 & & Z_3 & &&  \\
&&&&& W_2
\end{array}
\end{equation*} 
where the cross ratios $$[V_2, X_1, Y_2, X_3] = [ X_1, Y_0, Z_1, Y_2]  =[X_3, Y_2, Z_3, Y_4]  = [Y_2, Z_1, W_2, Z_3] =  -1,$$ then $$[V_2, Y_2, Y_0, W_2, Y_2, Y_4] = -1.$$

It suffices to consider the five points $V_2, X_1, X_3, Y_0, Y_4 \in \P^1$, since the rest of the points are uniquely determined given that the cross ratios are well defined.  Note that the cross ratios are preserved under projective transformations in $\P^1$, so we may assume that $V_2 = \infty$, thus $Y_2 = \frac{X_1+X_3}{2}$. 

Now it is just a matter of computation to solve for $W_2$ from the $4$ 4-term cross ratios, where we obtain that $$W_2 = \frac{(X_1+X_2)^2 - 4 Y_0 Y_4}{4 (X_1 + X_2 - Y_0 - Y_4)}.$$ It is easy to verify that $W_2$ satisfies the $6$-term cross ratio  as desired. 
\end{proof}

\begin{remark}
There is also a slightly more complicated, but also more elegant geometric proof of the lemma. 
\end{remark}

\begin{lemma}
Let $X_1, X_3, X_5 \in \P^1$ be $3$ points. Suppose that $Y$ is the point in $\P^1$ such that the following $6$-term cross ratio $$[\infty, X_3, X_1, Y, X_3, X_5] = -1.$$ Also suppose that $Y'$ is the point such that the $4$-term cross ratio $$[X_3, \frac{X_1+X_3}{2}, Y', \frac{X_3+X_5}{2}] = -1.$$ Then $Y = Y'.$ 
\end{lemma}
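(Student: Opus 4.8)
The plan is a direct computation in the affine coordinate on $\R^1 \subset \P^1$; this is legitimate because the statement already pins down the point $\infty$ and the affine midpoints $\tfrac{X_1+X_3}{2}, \tfrac{X_3+X_5}{2}$, so there is no remaining projective freedom to exploit. I would compute $Y$ and $Y'$ separately and observe that they are given by the same rational function of $X_1, X_3, X_5$.

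First I would evaluate the six-term cross ratio. Writing $[\infty, X_3, X_1, Y, X_3, X_5] = \dfrac{(\infty - X_3)(X_1-Y)(X_3-X_5)}{(X_3-X_1)(Y-X_3)(X_5-\infty)}$, the two factors involving $\infty$ occur as the ratio $\dfrac{\infty-X_3}{X_5-\infty}$, whose limiting value is $-1$. The defining relation therefore becomes $(X_1-Y)(X_3-X_5) = (X_3-X_1)(Y-X_3)$; expanding, the $X_1X_3$ terms cancel, every surviving term is linear in $Y$ or constant, and collecting yields
\[ Y = \frac{X_1X_5 - X_3^2}{X_1+X_5-2X_3}. \]

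Next I would evaluate the four-term cross ratio. With $p = \tfrac{X_1+X_3}{2}$ and $q = \tfrac{X_3+X_5}{2}$ one has $X_3 - p = \tfrac{X_3-X_1}{2}$ and $q - X_3 = \tfrac{X_5-X_3}{2}$, so $[X_3, p, Y', q] = -1$ rearranges to $(X_3-X_1)(Y'-q) = (Y'-p)(X_5-X_3)$, which is linear in $Y'$. Collecting the $Y'$-terms gives $Y'(2X_3 - X_1 - X_5) = p(X_3-X_5) + q(X_3-X_1)$, and plugging in the values of $p,q$ the right-hand side collapses (the cross terms cancel in pairs) to $X_3^2 - X_1X_5$, whence
\[ Y' = \frac{X_3^2 - X_1X_5}{2X_3 - X_1 - X_5} = \frac{X_1X_5 - X_3^2}{X_1+X_5-2X_3} = Y. \]

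I do not anticipate a genuine obstacle; the content is bookkeeping. The two points deserving a little care are: taking the limit of the $\infty$-factors in the six-term cross ratio correctly (rather than simply discarding them), and the degenerate configuration $X_1+X_5 = 2X_3$, in which both formulas give $Y = Y' = \infty$ — this is the expected limiting case and is harmless in $\P^1$. As a slightly cleaner alternative for the $Y'$-computation I could use that the four-term relation says exactly that $(X_3, Y')$ and $(p,q)$ are harmonic pairs; translating $p,q$ to $\pm\tfrac{X_5-X_1}{4}$ (a shift by $\tfrac{X_1+2X_3+X_5}{4}$) turns this into the product relation for the shifted coordinates, from which the same formula for $Y'$ falls out immediately. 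Either route establishes $Y=Y'$.
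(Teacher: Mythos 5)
Your proposal is correct and is exactly the paper's approach: the paper also just solves the two cross-ratio relations directly and observes that both yield $Y = Y' = \frac{X_3^2 - X_1X_5}{2X_3 - X_1 - X_5}$, merely omitting the intermediate algebra you spell out. No gaps; your extra care about the $\infty$-factors and the degenerate case $X_1 + X_5 = 2X_3$ is sound but not needed beyond what the paper records.
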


\begin{proof}
This follows from an easy calculation. We solve for $Y$ and $Y'$ from the corresponding cross ratios and obtain that $$Y = Y'  =\frac{X_3^2 - X_1 X_5}{2 X_3 - X_1 - X_5}.$$
\end{proof}

\begin{corollary}\label{corollary:odd_indices}
Let $A_0, A_1, ...$ be defined and labelled as above, then $T_1(A_{-1}, A_1) = (A_1, A_3)$, where $A_{-1} = \{\infty, \infty, ..., \infty\}$ consists of $n$ points at $\infty \in \R P^1$. 
\end{corollary}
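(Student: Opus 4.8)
The plan is to deduce Corollary~\ref{corollary:odd_indices} from Lemma~\ref{lemma:grid_embedding} together with the two preceding lemmas, essentially by checking that the "boundary case" $i=1$ (where the row $A_{-1}$ above is the all-$\infty$ row) behaves like the generic case. First I would spell out what the chessboard relations say near the top constant row: the diamond relations with top vertex an $\infty$-point of $A_{-1}$ degenerate the $4$-term cross ratio condition $[\infty, X_{3}, X_{1}, Y, X_{3}, X_{5}] = -1$ into the statement that $Y$ is obtained from its two horizontal neighbours in $A_{1}$ by the midpoint-type formula. This is exactly the content of the second of the two short lemmas above (the one asserting $Y=Y'=\tfrac{X_3^2-X_1X_5}{2X_3-X_1-X_5}$): it identifies the row $A_{1}\rightsquigarrow A_{3}$ transition, when the row above $A_1$ is constantly $\infty$, with the $6$-term cross ratio relation that defines $T_1$ on the pair $(A_{-1},A_1)$ with $A_{-1}=(\infty,\dots,\infty)$.

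Then the key step is simply to apply Lemma~\ref{lemma:grid_embedding}'s local computation, or rather its boundary analogue, at each diamond along the strip between rows $A_{-1}$, $A_0$ (no—between $A_{-1}, A_1, A_3$; here the intervening "even" row plays the role of the $Y$-row). Concretely, for each $k$ I would take the five relevant points $\infty \in A_{-1}$, the two points of $A_1$ adjacent to it, and the two points of $A_{-1}$ (again $\infty$) or of $A_1$ two steps over, feed them through the same elimination that proves Lemma~\ref{lemma:grid_embedding}, and read off that the resulting point of $A_3$ satisfies the $6$-term cross ratio defining $T_1(A_{-1},A_1)=(A_1,A_3)$. Because $A_{-1}$ is the constant $\infty$-row, one input to the defining cross ratio of $T_1$ is $\infty$, so the $6$-term relation collapses to the $4$-term one, and the match is immediate from the second lemma above applied at every index. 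In other words: $T_1$ on an axis-aligned pair $(\infty,B)$ is, by definition, governed by exactly the cross ratio relation that the chessboard imposes between a constant $\infty$-row, the next row, and the row after; hence $T_1(A_{-1},A_1)=(A_1,A_3)$.

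The only real obstacle is bookkeeping: making sure the cyclic indexing of the $n$ points in each $A_j$ matches up so that "the point of $A_3$ below a given diamond" is the correct $Z_i$ in the defining formula $[X_i, Y_i, Y_{i-1}, Z_i, Y_i, Y_{i+1}] = -1$, and that the degenerate inputs ($\infty$'s from $A_{-1}$) land in the slots where the limit of the $6$-term cross ratio is clean. I expect no genuine difficulty here beyond care with labels, since the non-degenerate version is already established in Lemma~\ref{lemma:grid_embedding}; the corollary is the specialization $A_{i-2}=A_{-1}=(\infty,\dots,\infty)$, $A_i = A_1$, and it suffices to observe that the algebraic identities in the proofs of the two short lemmas remain valid (indeed simplify) under that specialization. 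I would then remark that this corollary is precisely what is needed to recognize the odd-indexed sub-rows $A_1, A_3, A_5, \dots$ of the cross ratio frieze pattern as the successive images of $(\,(\infty,\dots,\infty),\,A_1\,)$ under iterates of $T_1$, setting up the reduction of Theorem~\ref{T005} to Theorem~\ref{T008}.
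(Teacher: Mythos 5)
Your proposal follows the paper's own proof: the diamonds directly under the constant $\infty$-row force $A_2$ to consist of the midpoints $\frac{1}{2}(X_{(1,2k-1)}+X_{(1,2k+1)})$, and the unnumbered lemma asserting $Y=Y'$ then converts the $A_1\to A_3$ diamond relation (through those midpoints) into the six-term cross ratio $[\infty, X_{(1,2k+1)}, X_{(1,2k-1)}, X_{(3,2k+1)}, X_{(1,2k+1)}, X_{(1,2k+3)}]=-1$ that defines $T_1(A_{-1},A_1)$. Two small corrections to your write-up: the $\infty$'s that produce the midpoint formula belong to $A_0$, a genuine row of the chessboard, not to the auxiliary row $A_{-1}$ (which carries no diamond relations), and no appeal to the elimination in the proof of Lemma \ref{lemma:grid_embedding} is needed once the $Y=Y'$ lemma is in hand.
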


\begin{proof}
 $A_0$ consists of $n$ points at $\infty$, so by the cross ratio condition we  have that $X_{(2, 2k)} = \frac{1}{2} (X_{(1,2k-1)} + X_{(1, 2k+1)})$ for all $k$. Thus $X_{(3,2k+1)}$ is determined by cross ratio $$[X_{(1,2k+1)}, \frac{X_{(1,2k-1)}+X_{(1,2k+1)}}{2}, X_{(3,2k+1)}, \frac{X_{(1,2k+1)} + X_{(1,2k+3)}}{2} ] = -1.$$  By the lemma we know that 
 $$[\infty, X_{(1,2k+1)},X_{(1,2k-1)}, X_{(3,2k+1)}, X_{(1,2k+1)}, X_{(1,2k+3)}] = -1.$$ By definition, this shows that $T_1(A_{-1}, A_1) = (A_1, A_3)$
 
\end{proof}

We restate the theorem that we want to prove:

\begin{theorem*}[\textbf{\ref{T005}}] Let $A_0$ and $A_1$ be as defined, and suppose that $$X_{(0,2)}  =  X_{(0,4)} = ... = X_{(0,2n)}  = \infty \in \R P^1,$$ also suppose that the cross ratio iterations are well defined for $2k-1$ steps, then $$ X_{(2n-1,k)} =X_{(2n, k+1)} = \frac{1}{n} \Big(X_{(1,1)} + X_{(1,3)} + ... +  X_{(1, 2n-1)}\Big) $$ for all $k =1, 3, ..., 2n-1$. 
\end{theorem*}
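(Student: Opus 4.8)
The plan is to recognize the two families of rows of the cross ratio frieze pattern — those indexed by odd integers and those indexed by even integers — as two orbits of the lower pentagram map $T_1$, and then to deduce the collapse directly from Theorem \ref{T008}. Throughout I would work under the standing genericity hypothesis, so that all the $T_1$-iterates invoked below are well defined.

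First I would treat the odd rows $A_1, A_3, \dots, A_{2n-1}$. Since $A_0$ consists of $n$ copies of $\infty$, Corollary \ref{corollary:odd_indices} says $T_1(A_{-1}, A_1) = (A_1, A_3)$ with $A_{-1} = (\infty,\dots,\infty)$, and Lemma \ref{lemma:grid_embedding} says $T_1(A_{2j-1}, A_{2j+1}) = (A_{2j+1}, A_{2j+3})$ for all admissible $j$. Chaining these, $T_1^{\,k}(\infty, A_1) = (A_{2k-1}, A_{2k+1})$ for $1 \le k \le n-1$, so in particular $T_1^{\,n-1}(\infty, A_1) = (A_{2n-3}, A_{2n-1})$. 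But $(\infty, A_1)$ is exactly an axis-aligned $n$-gon in the sense of Theorem \ref{T008}, so that theorem gives $T_1^{\,n-1}(\infty, A_1) = (C, D)$ with $D$ the constant tuple whose common value is $\mathscr C(A_1) = \tfrac1n\sum_i X_{(1,2i-1)}$. Reading off the second component yields $X_{(2n-1,k)} = \tfrac1n\sum_i X_{(1,2i-1)}$ for every $k$.

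Next I would run the identical argument one notch over, on the even rows $A_0, A_2, \dots, A_{2n}$. Because $A_0 \equiv \infty$, the cross ratio condition on the diamonds straddling $A_0, A_1, A_2$ forces $X_{(2,2k)} = \tfrac12\bigl(X_{(1,2k-1)} + X_{(1,2k+1)}\bigr)$ for all $k$; summing over $k$, in which each entry of $A_1$ is counted twice, gives $\mathscr C(A_2) = \mathscr C(A_1)$. Applying Lemma \ref{lemma:grid_embedding} repeatedly from the pair $(A_0, A_2) = (\infty, A_2)$ gives $T_1^{\,k}(\infty, A_2) = (A_{2k}, A_{2k+2})$, hence $T_1^{\,n-1}(\infty, A_2) = (A_{2n-2}, A_{2n})$; and Theorem \ref{T008} applied to the axis-aligned $n$-gon $(\infty, A_2)$ shows $A_{2n}$ is the constant tuple with value $\mathscr C(A_2) = \mathscr C(A_1)$. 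Together with the previous paragraph this is precisely the statement of Theorem \ref{T005}.

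The point that I expect to require genuine care — the main obstacle — is a degeneracy at the bottom edge of the pattern. Exhibiting $A_{2n}$ as the second component of $T_1^{\,n-1}(\infty, A_2)$ uses Lemma \ref{lemma:grid_embedding} for an index just past the range stated there, and that last invocation touches the already-collapsed row $A_{2n-1}$, at which the defining cross ratios run into an indeterminate form. The clean way around this is to observe that the local cross-ratio identity underlying Lemma \ref{lemma:grid_embedding} is stable up to the boundary by continuity: perturb $A_1$ so that no row of the pattern is constant, run both $T_1$-orbits to completion, and let the perturbation tend to zero; the limiting value of $A_{2n}$ is then forced to be the constant tuple $\mathscr C(A_1)$ predicted by Theorem \ref{T008}. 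Apart from packaging this limiting (and genericity) argument, the proof is a direct assembly of Corollary \ref{corollary:odd_indices}, Lemma \ref{lemma:grid_embedding}, and Theorem \ref{T008}.
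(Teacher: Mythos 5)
Your proposal is correct and follows essentially the same route as the paper: identify the odd-indexed and even-indexed rows as two $T_1$-orbits via Lemma \ref{lemma:grid_embedding} and Corollary \ref{corollary:odd_indices}, apply Theorem \ref{T008} to each orbit, and observe that $\mathscr C(A_2)=\mathscr C(A_1)$ because $X_{(2,2k)}=\tfrac12\bigl(X_{(1,2k-1)}+X_{(1,2k+1)}\bigr)$. The one place you go beyond the paper is the bottom row: the paper applies Lemma \ref{lemma:grid_embedding} ``repetitively'' to reach $T_1^{n-1}(A_0,A_2)=(A_{2n-2},A_{2n})$ even though the final step needs $i=2n-2$, just outside the lemma's stated range, and its lowest diamond involves the already-constant row $A_{2n-1}$, where the $4$-term cross ratio becomes indeterminate — your perturbation/continuity patch (or, equivalently, taking the nondegenerate $6$-point $T_1$ relation on the even rows as the definition of $A_{2n}$) addresses a genuine imprecision that the paper leaves implicit.
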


\begin{proof} 
First we consider the rows $A_i$ of even indices. We know that for all $i$ such that $1 \le i \le n-1$, $T_1^{i} (A_0, A_2) = (A_{2i}, A_{2i+2})$ by repetitively apply Lemma \ref{lemma:grid_embedding}. Especially, we have that $$T_1^{n-1} (A_0, A_2)= (A_{2n-2}, A_{2n}).$$ Theorem \ref{T008} tells us that $$X_{2n,2} = X_{2n,4} = ... = X_{2n, 2n} = \frac{1}{n}  \left(X_{(2,2)} + X_{(2,4)} + ... +  X_{(2, 2n)}\right).$$ 

For the odd indices, we apply Corollary \ref{corollary:odd_indices} and Lemma \ref{lemma:grid_embedding} and obtain that $$T_1^{n-1} (A_{-1}, A_1) = (A_{2n-3}, A_{2n-1}). $$ Again by Theorem \ref{T008}, we have that $$ X_{(2n-1,1)} = ... X_{(2n-1, 2n-1)}= \frac{1}{n} \Big(X_{(1,1)} + X_{(1,3)} + ... +  X_{(1, 2n-1)}\Big).$$
It is clear that $$X_{(1,1)} + X_{(1,3)} + ... +  X_{(1, 2n-1)} = X_{(2,2)} + X_{(2,4)} + ... +  X_{(2, 2n)}$$ since $A_0$ consists of infinities, hence we reach the statement of the theorem. 

\end{proof}

\section*{acknowledgement} 
This research was part of the 2014 combinatorics REU program at the University of Minnesota, Twin Cities, and was supported by RTG grant NSF/DMS-1148634. I want to thank my mentor, Max Glick, for sharing his conjectures and providing insightful guidance. I thank the TA Kevin Dilks for  his help, especially in editing the paper. I would also like to thank the math department of the university for its hospitality and Gregg Musiker for organizing the program. In addition, I want to thank Richard Schwartz for his helpful comments and encouragements, and  Ka Yu Tam and Victor Reiner for interesting conversations.  \\


\end{document}